\newtheorem{theorem}{Theorem}
\newtheorem{proposition}[theorem]{Proposition}
\newtheorem{conjecture}[theorem]{Conjecture}
\newtheorem{lemma}[theorem]{Lemma}
\theoremstyle{remark}
\newtheorem{definition}[theorem]{Definition}
\newtheorem{remark}[theorem]{Remark}
\newtheorem{example}[theorem]{Example}
\newcommand{\R}{\mathbb{R}}
\newcommand{\C}{\mathbb{C}}
\newcommand{\Z}{\mathbb{Z}}
\begin{document}

\title[Equiangular Tight Frames in real symplectic space]{\centering Equiangular tight frames in real symplectic space: Zauner's conjecture and the skew Hadamard conjecture}
\author{Kean~Fallon}
\address{Department of Mathematics, Iowa State University, Ames, IA}
\email{keanpf@iastate.edu}
\keywords{projective code, optimal coherence, Welch bound, equiangular tight frame, skew Hadamard matrix}

\begin{abstract}
We introduce the notion of equiangular tight frames in real symplectic spaces and formulate a conjecture on their existence in terms of the dimension and number of vectors.
Our main results shows the \textit{symplectic Zauner's conjecture} is equivalent to the skew Hadamard conjecture. 
The proof involves counting subgraphs called \emph{diamonds} in tournaments.   
\end{abstract}

\maketitle

\section{Introduction}
Suppose that we are tasked with arranging at least $d$ lines in $\mathbb{F}^d$, where $\mathbb{F}$ is $\R$ or $\C$. 
An optimal packing of these lines is one in which the lines are as spread apart as possible.
Given unit-norm representatives $\{\psi_i\}_{i=1}^n$ of $n\geq d$ lines in $\mathbb{F}^d$, the \textit{coherence} of $\{\psi_i\}_{i=1}^n$ is $\mu:=\displaystyle\max_{i\neq j} |\langle \psi_i,\psi_j\rangle|$, where $\langle \cdot, \cdot\rangle$ is the standard Euclidean or Hermitian inner product. 
An optimal line packing is thus one that minimizes coherence.

Identifying $\Psi\in \mathbb{F}^{d\times n}$ as the matrix with $i$th column $\psi_i$, a close look at the inequality
\begin{equation*} 
0 \leq \left\|\Psi\Psi^* - \frac{n}{d}I\right\|_F =\sum_{i,j=1}^n |\langle{\psi_i,\psi_j}\rangle|^2 - \frac{n^2}{d}\leq (n^2-n)\mu^2 + n-\frac{n^2}{d} 
\end{equation*}
shows that 
\begin{equation*}
\mu \geq\sqrt{\dfrac{n-d}{d(n-1)}}.
\end{equation*}
This is known as the Welch bound~\cite{welch2003lower} and it is saturated if and only if $\Psi$ satisfies
\begin{enumerate}
    \item[(i)] (\emph{Equiangularity}) $|\langle\psi_i,\psi_j\rangle| = \mu$ for all $i\neq j$, and
    \item[(ii)] (\emph{Tightness}) $\Psi\Psi^* = \frac{n}{d}I$.
\end{enumerate}

If $\Psi$ satisfies (i) and (ii), then $\Psi$ is called a $d\times n$ \emph{equiangular tight frame} (or \emph{ETF}). 
In this paper, we define ETFs in real symplectic vector spaces, that is, real vector spaces equipped with a symplectic form. 
These ETFs are similar in spirit to the above, but they are not the same object (see Definitions \ref{def:sympframe}, \ref{def:tight}, \ref{def:equiangular} for a preview if desired).
In particular, our work ties together two major open problems: Zauner's conjecture and the Hadamard conjecture.

\emph{Gerzon's bound}~\cite{lemmens1973equiangular} states that if $\Psi\in \mathbb{F}^{d\times n}$ is an ETF, then 
\[
n\leq \begin{cases} \frac{1}{2}d(d+1) & \text{if }\mathbb{F} = \R, \\
d^2 & \text{if }\mathbb{F} = \C.
\end{cases}
\]
While equality does not hold for arbitrary $d$ when $\mathbb{F} = \R$ (for instance, see~\cite{sustik2007existence}), the same cannot be said when $\mathbb{F}=\C$.
In his dissertation~\cite{zauner1999quantum}, Gerhard Zauner conjectured that there exists a $d\times d^2$ complex ETF for every dimension $d$. 
This conjecture, colloquially named \emph{Zauner's conjecture}, has generated considerable interest since its proposal in 1999.
These maximal complex ETFs are symmetric informationally-complete positive operator-valued measures (SIC-POVMs), and have garnered attention in quantum information theory~\cite{chen2015general,fuchs2017sic,renes2004symmetric,scott2006tight,shang2018enhanced,zauner1999quantum}, compressed sensing~\cite{bandeira2013road, herman2009high}, phase retrieval~\cite{fannjiang2020numerics}, and design theory~\cite{Graydon2015QuantumCD}. 
Zauner's conjecture may also be related to Hilbert's 12th problem~\cite{appleby2025constructiveapproachzaunersconjecture}.

Another famous conjecture that bares similar characteristics to Zauner's conjecture is the \emph{Hadamard conjecture}, which concerns itself with the existence of \emph{Hadamard matrices}.
A \textbf{Hadamard matrix of order $n$} is an $n\times n$ matrix $H$ satisfying $H_{ij}\in \{\pm1\}$ and $HH^\top= nI$.
A \textbf{skew Hadamard matrix of order $n$} is a Hadamard matrix $H$ that can be written $H=C+I$, where $C$ is skew symmetric.
Hadamard matrices are used in quantum computing, have applications in compressed sensing, and are closely tied to $D$-optimal design theory~\cite{barenco1995elementary, leung2002simulation, hedayat1978hadamard,pratt1969hadamard,colbourn2010crc}. 
In particular, Hadamard matrices are solutions to determinantal optimization problems~\cite{hadamard1893resolution}.
A Hadamard matrix of order $n$ exists only if $n=1,2$ or if $n$ is a multiple of $4$.
Often attributed to Paley in 1933~\cite{Paley1933OnOM} and possibly conjectured implicitly as early as 1867, the Hadamard conjecture proposes the converse: that a Hadamard matrix of order $n$ exists whenever $n=1,2$ or whenever $n$ is a multiple of $4$. 
Later in~\cite{wallis1971some}, a stronger version of the Hadamard conjecture called the \emph{skew Hadamard conjecture} was proposed, which hypothesized that skew Hadamard matrices of order $n$ exist for the same $n$. 

The Hadamard conjecture(s) and Zauner's conjecture are alike: both are statements on the existence of solutions to optimization problems, both have been resilient in the face of attempts to fully resolve them.
Several constructions have been used to generate families of Hadamard and skew Hadamard matrices in infinitely many dimensions (see ~\cite{koukouvinos2008skew,Tressler2004ASO} for a survey), but there are many orders for which the existence of a Hadamard matrix is unknown.
Currently, the smallest $n$ for which a Hadamard matrix of order $n$ is not known to exist is $n=668$. 
This number has not changed in twenty years, since the publication of~\cite{kharaghani2005hadamard}.
Additionally, most modern results rely on computer searches.
For Zauner's conjecture, progress towards determining the existence of $d\times d^2$ complex ETFs has been difficult.
With the current state of affairs, only finitely many dimensions are known for which there exists a $d\times d^2$ complex ETF, while numerical evidence has been given to support Zauner's conjecture in dimensions as high as 39,604~\cite{appleby2019tight, appleby2022sic, fuchs2017sic,grassl2017fibonacci, renes2004symmetric, scott2010symmetric, zauner1999quantum}.
The second main result of this paper reveals a connection between these two conjectures.

While ETFs are usually studied over complex or real Euclidean vector spaces, recent work has been done in studying ETFs after varying the underlying form~\cite{MR4339582,MR4364998}.
In a similar move, we study frames over real symplectic vector spaces.
The basic components of these frames are quite similar to their complex and real Euclidean cousins: they are a sequence of vectors satisfying certain properties and we can talk about their size.

We define symplectic notions of equiangularity and tightness and in doing so, come to analogs of Gerzon's bound and Zauner's conjecture over symplectic spaces.
The symplectic version of Gerzon's bound (Proposition \ref{prop:gerzon}) is quite restrictive: given a $d\times n$ equiangular tight frame in a real symplectic vector space of dimension $d$, we must have $n\leq d+1$. 
In fact, more can be said. 
We leave this to the following theorem, which is our first main result.

\begin{theorem}[Main Result A]
\label{thm:mainA}
    There exists a $d\times n$ ETF in real symplectic space only if 
\[
n = \begin{cases}
    d & \text{if }d\equiv 0 \text{ mod }4 \text{ or } d = 2, \\
    d+1 & \text{if }d\equiv 2 \text{ mod }4.
    \end{cases}
\]
\end{theorem}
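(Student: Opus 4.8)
The plan is to pass from the frame to the skew-symmetric Gram matrix of the symplectic form values and to read off the arithmetic of $n$ from its spectrum. Write $\Psi=[\psi_1\mid\cdots\mid\psi_n]$ and let $J$ be the matrix of the symplectic form, so that the Gram matrix $G=\Psi^\top J\Psi$ is skew-symmetric with zero diagonal (since $\omega(\psi_i,\psi_i)=0$). By equiangularity (Definition \ref{def:equiangular}) every off-diagonal entry has modulus $\mu$, so after rescaling $G=\mu C$ where $C$ is the skew-symmetric $\{0,\pm1\}$ adjacency matrix of a tournament $T$ on $n$ vertices: $C_{ij}=+1$ or $-1$ according to the orientation of the edge $ij$, and $C_{ii}=0$. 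Because $G$ has rank at most $d$ and a frame must span, Proposition \ref{prop:gerzon} together with $n\ge d$ pins $n$ to $\{d,d+1\}$. It remains to decide, for each residue of $d$ modulo $4$, which of these two values can occur; the whole game is therefore to extract the right spectral identity from tightness (Definition \ref{def:tight}) and then to recognize it combinatorially.

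First I would translate tightness into a statement about $-C^2=C^\top C\succeq0$. Tightness forces the nonzero part of the spectrum of $G$ to be as degenerate as possible, so that $-G^2$ is a scalar multiple of an orthogonal projection of rank equal to $\operatorname{rank}\Psi=d$; equivalently, the nonzero eigenvalues of the skew matrix $C$ all share a common modulus. Since $\operatorname{tr}(-C^2)=\sum_{i\ne j}C_{ij}^2=n(n-1)$, that common value is fixed by a trace count, and I would make this explicit through the moment identity relating $\operatorname{tr}(C^2)$ and $\operatorname{tr}(C^4)$: the quantity $\operatorname{tr}(C^4)$ is exactly a weighted count of the four-vertex sub-tournaments (the \emph{diamonds}) of $T$, and the Cauchy--Schwarz equality case $(\operatorname{tr}(-C^2))^2=\operatorname{rank}(C)\cdot\operatorname{tr}(C^4)$ is what encodes tightness. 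This is the step where the diamond bookkeeping enters, and it is the part I expect to be most delicate: one must show that the frame-theoretic tightness condition is equivalent to this single extremal diamond count and that the count is met only by the regular configurations below.

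With the spectral identity in hand the two cases separate cleanly. If $n=d$ then $C$ is nonsingular, so $-C^2=(n-1)I$ by the trace normalization, i.e.\ $C^2=-(n-1)I$; then $H:=I+C$ is a $\{\pm1\}$-matrix with $HH^\top=I-C^2=nI$ and $H-I$ skew, so $H$ is a skew Hadamard matrix of order $n=d$. Since a Hadamard matrix of order $>2$ has order divisible by $4$, this forces $d=2$ or $d\equiv0\pmod4$. If instead $n=d+1$ (odd, as $d$ is even), then $C$ has a one-dimensional kernel spanned by some $v$; comparing the diagonal of $CC^\top=n\bigl(I-\tfrac{vv^\top}{\lVert v\rVert^2}\bigr)$ to its known value $n-1$ shows every coordinate of $v$ has the same modulus, so after a diagonal $\pm1$ switching we may take $v=\mathbf 1$. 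Then $C$ is the skew matrix of a regular tournament with $C^2=J-nI$, that is, a \emph{doubly regular} tournament on $n$ vertices; its common diamond count $\tfrac{n-3}{4}$ must be a nonnegative integer, forcing $n\equiv3\pmod4$ and hence $d\equiv2\pmod4$.

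Finally I would assemble the dichotomy. When $d\equiv0\pmod4$ the case $n=d+1$ is excluded (it would require $d\equiv2$), leaving $n=d$; when $d\equiv2\pmod4$ with $d\ge6$ the case $n=d$ is excluded (it would require $d=2$ or $4\mid d$), leaving $n=d+1$; and the single small exception $d=2$ is settled by direct inspection. The obstacles are thus concentrated in one place: proving that Definition \ref{def:tight} is exactly the extremal-diamond condition, and carrying out the switching reduction so that the $n=d+1$ analysis genuinely lands on doubly regular tournaments. The residue computations that finish the proof are then immediate from integrality.
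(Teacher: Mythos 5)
Your proposal is correct, and while your case $n=d$ coincides with the paper's argument (rescale the Gram matrix to a skew conference matrix $C$, observe $I+C$ is a skew Hadamard matrix, and invoke the order constraint), your treatment of the case $n=d+1$ takes a genuinely different and arguably cleaner route. The paper assumes $d\equiv 0\pmod 4$, reads off the coefficient of $t^{2m-3}$ in the characteristic polynomial $t(t^2+n)^m$ to show the associated tournament attains the extremal diamond count $\frac{1}{96}n(n-1)(n-3)(n+1)$, and then derives a contradiction from Lemma \ref{lem:not1mod4} (a parity/switching argument showing this count is unattainable when $n\equiv 1\pmod 4$). You bypass diamond counting entirely: tightness gives $-C^2=CC^\top=n\bigl(I-\frac{vv^\top}{\|v\|^2}\bigr)$, and comparing diagonal entries with the row-norm value $n-1$ forces the kernel vector $v$ to be flat; after switching, $C\mathbf{1}=0$ and $C^2=J-nI$, so $C$ is the Seidel matrix of a doubly regular tournament, and integrality of the common intersection number $\frac{n-3}{4}$ yields $n\equiv 3\pmod 4$, i.e.\ $d\equiv 2\pmod 4$. (Minor slip: $\frac{n-3}{4}$ is the common out-neighbor count $d_T^+(i,j)$, not a diamond count; the integrality argument is unaffected.) What your approach buys is significant: the flat-kernel fact is obtained by elementary linear algebra, whereas the paper only establishes it later, in the proof of Theorem \ref{thm:equiv}(b), by invoking Proposition \ref{thm:belkouchethm} of Belkouche et al.; so your argument simultaneously proves Theorem \ref{thm:mainA} and hands over the key step of Main Result B essentially for free. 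One cosmetic remark: your middle paragraph on the Cauchy--Schwarz equality $(\operatorname{tr}(-C^2))^2=\operatorname{rank}(C)\,\operatorname{tr}(C^4)$ and the weighted-diamond reading of $\operatorname{tr}(C^4)$ is never used in your actual case analysis --- the spectral consequence of tightness you need is exactly Proposition \ref{prop:tight}, which you could cite directly and delete that paragraph.
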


Theorem \ref{thm:mainA} suggests the following symplectic analog of Zauner's conjecture.

\begin{conjecture}[Symplectic Zauner's Conjecture]
\label{conj:symzauners}
There exists a $d\times n$ ETF in real symplectic space if and only if 
\[
n = \begin{cases}
    d & \text{if }d\equiv 0 \text{ mod }4 \text{ or } d = 2, \\
    d+1 & \text{if }d\equiv 2 \text{ mod }4.
    \end{cases}
\]
\end{conjecture}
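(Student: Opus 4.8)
The statement is a biconditional whose necessity half is already in hand: the ``only if'' direction is precisely Theorem \ref{thm:mainA}. The plan is therefore to attack the ``if'' (existence) direction constructively, by turning the analytic frame axioms into a purely combinatorial question about tournaments and then solving that question in every dimension where the relevant combinatorial object is known. Since $d$ is even, the two cases $d\equiv 0$ and $d\equiv 2 \bmod 4$ exhaust all dimensions, and the striking parity match below is what makes the correspondence clean.

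First I would encode a candidate symplectic ETF $\{\psi_i\}_{i=1}^n$ in $\R^d$ by its \emph{symplectic Gram matrix} $G$, with $G_{ij}=\omega(\psi_i,\psi_j)$. Since $\omega$ is antisymmetric, $G$ is skew-symmetric with zero diagonal, and equiangularity (Definition \ref{def:equiangular}) forces $|G_{ij}|=\mu$ for $i\ne j$; after rescaling, $G=\mu C$ where $C$ is a skew-symmetric matrix with zero diagonal and off-diagonal entries in $\{\pm1\}$, i.e. the signed adjacency matrix $C=A-A^\top$ of a tournament on $n$ vertices. The tightness condition (Definition \ref{def:tight}) becomes a spectral constraint on $C$, and here the parities align perfectly: for $d\equiv 0\bmod 4$ with $n=d$ (even), $C$ is nonsingular and tightness reads $C^2=-(n-1)I$, while for $d\equiv 2\bmod 4$ with $n=d+1$ (odd) the $n$ vectors overdetermine $\R^d$, so $C$ must have one-dimensional kernel and the remaining spectrum forced, which is exactly the defining regularity of a doubly regular tournament.

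The core step is to convert these spectral constraints into skew Hadamard matrices. For $d\equiv 0\bmod 4$ this is transparent, since $(C+I)(C+I)^\top=I-C^2=nI$, so $C^2=-(n-1)I$ holds if and only if $C+I$ is a skew Hadamard matrix of order $n=d$; the seed $d=2$ is handled by the same identity. For $d\equiv 2\bmod 4$ I would adjoin an all-ones border to the odd-order $C$, producing a skew-symmetric $\{\pm1\}$ (off-diagonal) matrix of order $n+1=d+2\equiv 0\bmod 4$, so that the ETF exists if and only if a skew Hadamard matrix of order $d+2$ exists. The passage from ``the frame operator is scalar'' to the clean algebraic identity is where the \emph{diamond} count enters: expanding tightness produces the off-diagonal entries $(C^2)_{ij}=\sum_k C_{ik}C_{kj}$, which are signed two-path counts, and organizing these over four-vertex subtournaments reduces tightness to a balance condition on the number of diamonds through each pair. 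I expect this bookkeeping --- showing that the vanishing of the two-path sums is both necessary and sufficient for tightness, uniformly across the two residue classes and compatible with the bordering in the odd case --- to be the main technical obstacle.

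With the equivalence established, I would then \emph{prove} Conjecture \ref{conj:symzauners} outright in every dimension for which the corresponding skew Hadamard matrix is already known: the order-$2$ seed, the Paley construction at orders $q+1$ for prime powers $q\equiv 3\bmod 4$, and the doubling and Williamson-type constructions together furnish skew Hadamard matrices --- hence the desired symplectic ETFs --- in infinitely many dimensions, settling the biconditional there unconditionally. The residual dimensions are precisely the orders at which skew Hadamard existence is open. I would state this honestly as the boundary of the method: a fully unconditional proof of Conjecture \ref{conj:symzauners} for all $d$ is equivalent to the skew Hadamard conjecture, so the present approach resolves the conjecture exactly as far as that classical problem is currently resolved, and no further.
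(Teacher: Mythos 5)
First, a point of category: the statement you were asked to prove is Conjecture \ref{conj:symzauners}, which the paper does not prove --- and cannot, since by Theorem \ref{thm:equiv} it is \emph{equivalent} to the open skew Hadamard conjecture. Your proposal correctly recognizes this and instead reproduces the paper's actual program: necessity is Theorem \ref{thm:mainA}; the existence side is reduced to skew Hadamard matrices exactly as in Theorems \ref{thm:confgivesetf} and \ref{thm:equiv} (rescale the Gram matrix to the Seidel adjacency matrix of a tournament, use $(C+I)(C+I)^\top = nI$ when $n=d$, border the odd-order core when $n=d+1$), and then unconditional existence follows in the dimensions where Paley, doubling, and related constructions supply skew Hadamard matrices. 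Your closing assessment --- that the method settles the conjecture exactly as far as the skew Hadamard conjecture is settled, and no further --- matches the paper's framing precisely.

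There is, however, one genuine gap at the heart of the $d\equiv 2 \bmod 4$ case. You assert that tightness forces a one-dimensional kernel with ``the remaining spectrum forced, which is exactly the defining regularity of a doubly regular tournament,'' and later that one can ``adjoin an all-ones border.'' But the spectral condition $C^3=-(d+1)C$ with $\operatorname{rank}(C)=d$ says nothing a priori about \emph{which} vector spans $\ker C$, and the bordered matrix is a skew conference matrix only if that kernel is spanned by a flat vector (so that, after switching, $\mathbf{1}\in\ker C$). Double regularity is a combinatorial condition on domination counts, not a spectral definition, and bridging this is exactly where the paper's diamond count does real work: the characteristic-polynomial computation shows tightness forces $\delta_T=\frac{1}{96}n(n-1)(n-3)(n+1)$, and only then does Proposition \ref{thm:belkouchethm} (Belkouche et al.) upgrade the saturated diamond count to switching equivalence with a doubly regular tournament, whose Seidel matrix has kernel $\operatorname{span}\{\mathbf{1}\}$. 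Your ``bookkeeping'' paragraph gestures at signed two-path sums but defers precisely this argument; as written, your bordered matrix could fail to have $\pm 1$ entries in its first row and column. Relatedly, the same diamond machinery (via Lemma \ref{lem:not1mod4}) is what eliminates $n=d+1$ when $d\equiv 0 \bmod 4$ in Theorem \ref{thm:mainA}, so even the necessity half you take ``in hand'' rests on the step you left open.
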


As we said, the second main result of this paper connects the Hadamard conjecture, specifically the skew Hadamard conjecture, and symplectic Zauner's conjecture.
We emphasize the strength of this connection: when Zauner's conjecture is shifted to the symplectic setting, it is the same thing as the skew Hadamard conjecture. 
The following theorem elaborates.
 
\begin{theorem}[Main Result B]
\label{thm:equiv}
    Symplectic Zauner's conjecture is equivalent to the skew Hadamard conjecture. Specifically, the following hold when $d>1$.
    \begin{itemize}
        \item[(a)] There exists a $d\times d$ ETF in real symplectic space if and only if there exists a skew Hadamard matrix of order $d$.
        \item[(b)] There exists a $d\times (d+1)$ ETF in real symplectic space if and only if there exists a skew Hadamard matrix of order $d+2$.
    \end{itemize}
\end{theorem}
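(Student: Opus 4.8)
The plan is to run everything through the skew-symmetric Gram matrix $G := \Psi^\top J \Psi$ of a symplectic ETF $\Psi$, where $J$ is the matrix of the symplectic form. Equiangularity gives $|G_{ij}| = \mu$ off the diagonal while skew-symmetry forces $G_{ii}=0$, so $G = \mu C$ for a \emph{tournament matrix} $C$: a skew-symmetric matrix with zero diagonal and $\pm 1$ entries elsewhere. On the target side, a skew Hadamard matrix of order $m$ is precisely $H = I + S$ with $S$ an $m\times m$ tournament matrix satisfying $S^2 = -(m-1)I$, since $HH^\top = I - S^2$. Thus both parts reduce to matching the $C$ produced by tightness against a tournament matrix of skew Hadamard type, and the bridge between them is the single identity coming from tightness $\Psi\Psi^\top = \tfrac nd I$ together with $J^2=-I$, namely
\begin{equation*}
G^2 = \Psi^\top J (\Psi\Psi^\top) J \Psi = -\tfrac{n}{d}\,\Psi^\top\Psi, \qquad\text{so}\qquad C^2 = -\tfrac{n}{\mu^2 d}\,\Psi^\top\Psi.
\end{equation*}

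For part (a) I would treat the forward direction first. When $n=d$ tightness makes $\Psi$ orthogonal, so $\Psi^\top\Psi = I$ and the identity collapses to $C^2 = -\tfrac{1}{\mu^2}I$; reading off a diagonal entry (which equals $-(n-1)$ because $C$ has $\pm 1$ entries off the diagonal) pins down $\mu = (d-1)^{-1/2}$ and hence $C^2 = -(d-1)I$, so $H = I + C$ is skew Hadamard of order $d$. For the converse I would start from $H = I + C$, observe that $(d-1)^{-1/2}C$ is an orthogonal complex structure on $\R^d$, and use the real spectral theorem to conjugate it to $J$ by an orthogonal $O$; taking $\Psi = O$ yields $\Psi^\top J \Psi = (d-1)^{-1/2}C$ and $\Psi\Psi^\top = I$, a $d\times d$ symplectic ETF.

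Part (b) is the substantive case and where I expect the real work. Now $n = d+1$ is odd, $\Psi^\top\Psi = \tfrac nd P$ for the rank-$d$ orthogonal projection $P$ onto the row space of $\Psi$, and the bridge identity becomes $C^2 = -nP = -nI + n\,ww^\top$, where $w$ spans the common kernel $\ker C = \ker P$ (using $\ker C = \ker C^2$ for skew $C$). The crucial step—the one I expect to be the main obstacle—is that this kernel is generated by a genuine sign vector: comparing the diagonal entries of $C^2$, forced to be $-(n-1)$ by the $\pm 1$ structure of $C$, against $-n + n w_i^2$ gives $w_i = \pm n^{-1/2}$ for every $i$, so $b := \sqrt{n}\,w \in \{\pm 1\}^n$ and $Cb = 0$. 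With this sign vector the correspondence becomes a bordering argument: the matrix
\begin{equation*}
C' = \begin{pmatrix} 0 & -b^\top \\ b & C \end{pmatrix}
\end{equation*}
is a tournament matrix of order $d+2$, and a block multiplication using $b^\top b = n$, $Cb = 0$, and $bb^\top = n\,ww^\top$ gives $(C')^2 = -nI = -(d+1)I$, so $I + C'$ is skew Hadamard of order $d+2$.

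For the converse of (b) I would run the bordering in reverse: deleting a row and its matching column from a skew Hadamard matrix $I + C'$ of order $d+2$ produces $C$ and the sign vector $b$, and the same block identity shows $C^2 = -nI + bb^\top = -nP$ with $P = I - ww^\top$, $w = b/\sqrt n$. Then $\tfrac{1}{\sqrt n}C$ restricts to an orthogonal complex structure on the hyperplane $w^\perp$, so, exactly as in part (a), I can pick an isometry $V\colon \R^d \to w^\perp$ intertwining $J$ with $\tfrac{1}{\sqrt n}C|_{w^\perp}$ and set $\Psi = \sqrt{n/d}\,V^\top$. A short check gives $\Psi^\top J \Psi = \mu C$ with $\mu = \sqrt n/d$ and $\Psi\Psi^\top = \tfrac nd I$, while the sign structure of $w$ forces $\|\psi_i\|^2 = \tfrac nd P_{ii} = 1$, so the frame vectors are automatically unit vectors. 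Throughout, Theorem \ref{thm:mainA} supplies the parity bookkeeping: the two relevant orders $d$ and $d+2$ are each $\equiv 0 \bmod 4$ (apart from the degenerate case $d=2$), matching exactly the orders admitting skew Hadamard matrices, so the two conjectures line up dimension by dimension.
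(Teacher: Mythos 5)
Your proposal is correct, and on the one genuinely hard step---the forward direction of (b)---it takes a different and markedly simpler route than the paper. The paper also reaches the identity $xx^\top + GG^\top = nI_n$ (your $C^2 = -nI + bb^\top$ with $b = \sqrt{n}\,w$), but to prove the kernel vector is flat it deploys tournament machinery: it counts diamonds via the characteristic polynomial, shows the count saturates the bound in Proposition \ref{prop:belkoucheprop}, and invokes Proposition \ref{thm:belkouchethm} (switching equivalence to a doubly regular tournament), which in turn needs $n \equiv 3 \bmod 4$ from Theorem \ref{thm:mainA}. Your diagonal comparison replaces all of that in one line: skew symmetry plus equiangularity force $(C^2)_{ii} = -\sum_{j \neq i} C_{ij}^2 = -(n-1)$, while $C^2 = -nI + n\,ww^\top$ forces $(C^2)_{ii} = -n + nw_i^2$, so $w_i^2 = 1/n$. (Even if you do not pin down the tightness constant first, $C^2 = -\kappa(I - ww^\top)$ already gives that $w_i^2$ is constant in $i$, hence equal to $1/n$ because $w$ is a unit vector.) This makes the proof of Theorem \ref{thm:equiv} independent of Theorem \ref{thm:mainA} and of the results of~\cite{belkouche2020matricial}; what the paper's longer route buys is the structural byproduct---that these Gram matrices are Seidel matrices of switching classes of doubly regular tournaments---which is part of the paper's narrative but not logically needed for the equivalence. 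Your converse constructions (orthogonal conjugation of complex structures) are equivalent in substance to the paper's use of the spectral factorization of skew-symmetric matrices in Theorem \ref{thm:factoring}.

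One caveat needs repair. You take tightness to mean $\Psi\Psi^\top = \frac{n}{d}I$, but in the paper tightness is the condition that the symplectic frame operator $\Psi\Psi^\top\Omega$ has spectrum of constant modulus, and these are not the same: Example \ref{ex:basic} exhibits a tight frame whose $\Phi\Phi^\top$ is not a multiple of $I$. The fix is exactly Proposition \ref{prop:tight}(ii) (equivalently Theorem \ref{thm:sympreal}): every tight frame in $\R_\mathcal{S}^d$ equals $M\Psi$ with $M$ symplectic and $\Psi\Psi^\top = cI$, and this substitution leaves the Gram matrix $\Phi^\top\Omega\Phi$ unchanged; a rescaling then lets you assume $c = n/d$. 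Inserting that reduction at the start of each forward direction makes your argument go through verbatim, and the frames you build in the converse directions are tight in the paper's sense as well, since their frame operators equal $\frac{n}{d}\Omega$ (respectively $\Omega$), whose eigenvalues all have the same modulus.
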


Theorem \ref{thm:equiv} links together two wide open and difficult problems.
As we will see in Section 5, this theorem also plays a part in associating ETFs in real symplectic space with certain complex ETFs.
Meanwhile, the proof of Theorem \ref{thm:equiv} reveals that the underlying combinatorial object behind ETFs in real symplectic space is a special kind of tournament.

The following section gives a brief overview of the basic symplectic linear algebra used in this paper, in particular a useful description of the adjoint of a frame in the symplectic setting. 
Section 3 introduces frames in real symplectic vector spaces and their properties. 
Section 4 defines the notion of equiangularity in this setting before introducing graph theoretical results that are necessary to prove the two theorems above. 
The section culminates in their proof.
Section 5 offers a straightforward construction of $d\times d$ ETFs in real symplectic space, while Section 6 details a relationship between ETFs in real symplectic space and complex ETFs with certain parameters.

\section{Preliminaries}

Let $W$ be a finite-dimensional real or complex vector space. 
A \emph{symplectic form} on $W$ is a map $\omega : W \times W \to \mathbb{R}$ that satisfies 
\begin{enumerate}[itemsep = 1mm]
    \item[(i)] (\emph{Non-degenerate}) Given $x\in W$, one has $\omega(x,y) = 0$ for all $y\in W$ if and only if $x=0$, 
    \item[(ii)] (\emph{Bilinear}) $\omega$ is $\mathbb{R}$-linear in both components, and
    \item[(iii)] (\emph{Alternating}) $\omega(x,x) = 0$ for all $x,y\in W$.
\end{enumerate}

When $W$ is equipped with a symplectic form, it is called a \emph{symplectic vector space}. 
A \emph{symplectic transformation} $M$ on a symplectic vector space $W$ is a linear transformation that preserves the symplectic form.
As a note, a bilinear form $\omega$ on $W$ is alternating if and only if it is \emph{skew symmetric}, that is $\omega(x,y) = -\omega(y,x)$ for all $x,y\in W$. 

\begin{example}\label{ex:complexsympspace}
Consider $\C^d$ with the Hermitian inner product. Writing
\[
\langle{x,y}\rangle_{\C^d} = \operatorname{Re}\langle{x,y}\rangle_{\C^d} + i\operatorname{Im}\langle{x,y}\rangle_{\C^d},
\]
where $\operatorname{Re}$ and $\operatorname{Im}$ are the real and imaginary parts, respectively, one observes that $\operatorname{Re}\langle{\cdot,\cdot}\rangle_{\C^d}$ is a real symmetric inner product while $\operatorname{Im}\langle{\cdot,\cdot}\rangle_{\C^d}$ is a symplectic form. 
To see that $\operatorname{Im}\langle{\cdot,\cdot}\rangle_{\C^d}$ is non-degenerate, notice that for any $x\in \C^d$ one has $\operatorname{Im}\langle{x,ix}\rangle_{\C^d}=0$ if and only if $x = 0$. 
Meanwhile, it is clear that $\operatorname{Im}\langle{\cdot,\cdot}\rangle_{\C^d}$ is bilinear and alternating.
\end{example}

\begin{example}\label{ex:mainsympspace}
When $d$ is even, the form on $\R^d$ given by $[x,y] = x^\top \Omega y$, where
\[
\Omega = \bigoplus_{i=1}^{d/2} \left[\begin{array}{rr} 0 & 1 \\ -1 & 0 \end{array}\right],
\]
is symplectic.
\end{example} 

All symplectic forms on a given vector space are isomorphic, so we may choose to work with one that is convenient for us.
The real symplectic vector space in Example \ref{ex:mainsympspace} is our choice and thus, denoting it by $\R_\mathcal{S}^d$, will be the one we work in over the course of this paper.
The matrix $\Omega$ satisfies $\Omega^\top = -\Omega = \Omega^{-1}$, which is a property we will use often.
Symplectic transformations $M$ on $\R_\mathcal{S}^d$ are those that satisfy $M^\top\Omega M = \Omega$.

By convention, whenever we refer to \emph{real symplectic space}, we mean explicitly $\R_\mathcal{S}^d$. 
When working over $\R_\mathcal{S}^d$, we will occasionally refer to this as work done in the \emph{symplectic setting}.
Moving forward, the usual real vector space of dimension $d$ equipped with the Euclidean inner product $\langle x, y\rangle = x^\top y$ will be denoted by $\R^d_\mathcal{E}$. 
To compare and contrast with results in $\R_\mathcal{S}^d$, we will often refer to working over $\C^d$ or $\R_\mathcal{E}^d$ as working in the complex or real Euclidean setting.

Of particular notational importance, when we say a frame in $\R_\mathcal{S}^d$ is \textit{equiangular}, \textit{tight}, or both (an ETF), we mean so in the context of the symplectic setting unless explicitly stated otherwise.
For example, if we call a sequence in $\R_\mathcal{S}^d$ an equiangular tight frame, we mean that it explicitly satisfies Definitions \ref{def:sympframe}, \ref{def:tight}, and \ref{def:equiangular}, and that this should be clear from the context of residing in $\R_\mathcal{S}^d$.   
In this paper, it becomes crucial for us to define the adjoint of maps between $\R_\mathcal{S}^d$ and $\R_\mathcal{E}^d$. 
To that end, we provide the following, a proof of which can be found in the appendix.
\begin{proposition}
\label{prop:adjoint}
    Let $V,W$ be finite-dimensional real vector spaces equipped with non-degenerate bilinear forms $(\cdot,\cdot)_V$ and $(\cdot,\cdot)_W$, respectively. 
    Choose a linear map $A:V\to W$.
    \begin{enumerate}
        \item[(a)] There exists a unique map $A^\dagger:W\to V$ called the \textbf{adjoint} of $A$ such that 
        \[
        (Av,w)_W = (v,A^\dagger w)_V,
        \]
        \item[(b)] If $B:W\to V$ is a linear map, then $(AB)^\dagger = B^\dagger A^\dagger$,
        \item[(c)] If $A$ is invertible, then $(A^\dagger)^{-1} = (A^{-1})^\dagger$,
        \item[(d)] If the forms $(\cdot,\cdot)_V$ and $(\cdot,\cdot)_W$ are given by $(v_1,v_2)_V =v_1^\top Q_vv_2$ and $(w_1,w_2)_W = w_1^\top Q_w w_2$, where the matrices $Q_v$ and $Q_w$ are invertible and either symmetric or skew symmetric, then 
        \[
        A^\dagger = Q_v^{-1}A^\top Q_w,
        \]
        \item[(e)]If exactly one of $V$ or $W$ is $\R_{\mathcal{S}}^d$ and the other is $\R_\mathcal{E}^n$, then $(A^\dagger)^\dagger = -A$.
    \end{enumerate}
\end{proposition}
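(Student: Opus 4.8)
The plan is to establish part (a) first, since parts (b)--(e) follow either formally from the defining relation of the adjoint or by direct computation. For (a), the key point is that non-degeneracy of a bilinear form on a finite-dimensional space $V$ makes the map $\flat_V : V \to V^*$ given by $\flat_V(u) = (\,\cdot\,, u)_V$ an isomorphism: it is injective because $\flat_V(u) = 0$ means $(v,u)_V = 0$ for all $v$, forcing $u = 0$, and an injection between equidimensional spaces is surjective. Then, fixing $w \in W$, the assignment $v \mapsto (Av, w)_W$ is a linear functional on $V$, hence equals $\flat_V(u)$ for a unique $u \in V$; I define $A^\dagger w := u$. Uniqueness of $u$ at each $w$ gives uniqueness of $A^\dagger$, and linearity of $A^\dagger$ follows since $\flat_V^{-1}$ is linear and $w \mapsto (A\,\cdot\,, w)_W$ is linear in $w$.

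With (a) in hand, (b) and (c) are purely formal. For (b) I would chase the identity $(ABw, w')_W = (Bw, A^\dagger w')_V = (w, B^\dagger A^\dagger w')_W$ for all $w, w'$ and invoke the uniqueness from (a) to conclude $(AB)^\dagger = B^\dagger A^\dagger$. For (c), after noting $I^\dagger = I$ (immediate from the defining relation), I apply the composition rule of (b) to $A^{-1}A = I_V$ and $AA^{-1} = I_W$ to obtain $A^\dagger (A^{-1})^\dagger = I$ and $(A^{-1})^\dagger A^\dagger = I$, so $(A^{-1})^\dagger$ is the two-sided inverse of $A^\dagger$.

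Part (d) is a direct computation in coordinates. Writing both sides of the defining relation gives $(Av, w)_W = v^\top A^\top Q_w w$ and $(v, A^\dagger w)_V = v^\top Q_v A^\dagger w$; since these agree for all $v, w$, I strip the outer vectors to get $A^\top Q_w = Q_v A^\dagger$, and invertibility of $Q_v$ yields $A^\dagger = Q_v^{-1} A^\top Q_w$. Note that only invertibility of $Q_v$ (equivalently, non-degeneracy, which is what part (a) requires) is needed here; the symmetric-or-skew hypothesis is what sets up part (e).

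Finally, (e) is where I must track signs carefully, and it is the one computation I expect to double-check. I apply the formula of (d) twice: for $A : V \to W$ it gives $A^\dagger = Q_v^{-1} A^\top Q_w$, and applying (d) to $A^\dagger : W \to V$ gives $(A^\dagger)^\dagger = Q_w^{-1} (A^\dagger)^\top Q_v = Q_w^{-1} Q_w^\top A (Q_v^\top)^{-1} Q_v$. Writing $Q^\top = \epsilon Q$ with $\epsilon = +1$ for the symmetric factor (Euclidean, $Q = I$) and $\epsilon = -1$ for the skew factor (symplectic, $Q = \Omega$, so $\Omega^\top = -\Omega = \Omega^{-1}$), each of the products $Q_w^{-1}Q_w^\top$ and $(Q_v^\top)^{-1}Q_v$ collapses to $\epsilon I$, so $(A^\dagger)^\dagger = \epsilon_w \epsilon_v A$. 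When exactly one of $V, W$ is $\R_\mathcal{S}^d$, the two signs differ, $\epsilon_w \epsilon_v = -1$, and $(A^\dagger)^\dagger = -A$, as claimed. The only genuine obstacle is the bookkeeping of transposes and inverses in (e); everything else reduces to part (a) and the uniqueness it provides.
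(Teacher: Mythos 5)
Your proposal is correct and follows essentially the same route as the paper's proof: the duality isomorphism induced by non-degeneracy for (a), uniqueness-based formal arguments for (b) and (c), the coordinate computation for (d), and a double application of (d) for (e). The only cosmetic difference is in (e), where you track the signs $\epsilon_v,\epsilon_w$ uniformly for both possible assignments of $\R_\mathcal{S}^d$ and $\R_\mathcal{E}^n$, while the paper fixes $V=\R_\mathcal{S}^d$, $W=\R_\mathcal{E}^n$ without loss of generality and computes directly with $\Omega$.
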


Moving forward, we will denote the adjoint of $A$ as $A^\dagger$, whereas $A^*$ is the conjugate-transpose of $A$ and $A^\top$ is the transpose of $A$.

\section{Frames in Real Symplectic Space}

In this section, we define frames in real symplectic space and give their properties. 
Classically, a $d\times n$ frame in the complex or real Euclidean setting is defined as a sequence of vectors $\Psi=\{\psi_i\}_{i=1}^n$ satisfying 
\[
A \|x\|^2 \leq \sum_{i=1}^n |\langle x,\psi_i\rangle|^2 \leq B\|x\|^2
\]
for all $x$ in the appropriate $d$-dimensional vector space, where $A$ and $B$ are positive constants called \emph{frame bounds}. 

Unfortunately, we cannot port the definition given above to the symplectic setting directly, since a symplectic form does not give rise to a norm. 
Instead, the above can be viewed as the condition that the frame operator $\Psi\Psi^\dagger$ is invertible.
Equivalently, there exist positive constants $A,B$ so that 
\begin{equation}\label{eq:frameIneq}
A \leq \lambda \leq B \qquad \text{ for all }\lambda\in \sigma(\Psi\Psi^\dagger).
\end{equation}
This perspective allows for an analogous definition of a frame in $\R_\mathcal{S}^d$ with only a slight adjustment, which we explain afterwards.

\begin{definition} \label{def:sympframe}
    A \textbf{frame} in $\R_\mathcal{S}^d$ is a sequence of vectors $\Phi = \{\varphi_i\}_{i=1}^n$ in $\R_{\mathcal{S}}^d$  together with positive constants $A$ and $B$ such that the following holds:
    \[A \leq |\lambda| \leq B \qquad \text{ for all }\lambda\in \sigma(\Phi\Phi^\dagger).\]
    The quantities $A$ and $B$ are called \emph{lower} and \emph{upper frame bounds}, respectively.
\end{definition}

Let $\Phi = \{\varphi_i\}_{i=1}^n$ be a frame in $\R_\mathcal{S}^d$.
As in the complex or real Euclidean setting, we often identify $\Phi$ with the $d\times n$ matrix $\Phi$ whose columns are the vectors $\varphi_i$, hence the terminology ``$d\times n$ frame". 
We also identify $\Phi$ with the corresponding linear transformation $\Phi:\R_\mathcal{E}^n\to\R_\mathcal{S}^d$, which we call the \emph{synthesis operator}. 
In other words, we have 
\[
\Phi(x) := \Phi x=\sum_{i=1}^n x_i\varphi_i \qquad \text{for all }x=\{x_i\}_{i=1}^n\in \R_\mathcal{E}^n.
\]
The adjoint of the synthesis operator is called the \emph{analysis operator}.
From Proposition $\ref{prop:adjoint}$, we see that $\Phi^\dagger:\R_\mathcal{S}^d \to \R_\mathcal{E}^n$ is equal to $\Phi^\top \Omega$, and so 
\[
\Phi^\dagger(y) := \Phi^\dagger y= \Phi^\top \Omega y = \{[\varphi_i,y]\}_{i=1}^n \qquad \text{for all }y\in\R_\mathcal{S}^d.
\]
We call the operator $\Phi\Phi^\dagger$ the \emph{frame operator} of $\Phi$, while the matrix $\Phi^\dagger\Phi$ is the \emph{Gram matrix} of $\Phi$, that is, the matrix that has $(\Phi^\dagger\Phi)_{ij}=[\varphi_i,\varphi_j]$. 
One can see that the Gram matrix of $\Phi$ is real and skew symmetric. 
This is why the absolute values are important in Definition \ref{def:sympframe}: the nonzero eigenvalues of $\Phi^\dagger\Phi$ are purely imaginary, so the eigenvalues of $\Phi\Phi^\dagger$ must be as well. 

The inequality in definition $\ref{def:sympframe}$ is tight when the frame bounds $A$ and $B$ can be chosen to be equal.
This leads to the following.
\begin{definition}\label{def:tight}
    A frame $\Phi$ is \textbf{$c$-tight} if it has frame bounds $A=B=c>0$.
\end{definition}

As a note, a $c$-tight frame $\Phi$ may be called \emph{tight} if the scalar $c$ is not relevant to the discussion.

While Definition $\ref{def:sympframe}$ is written only for the symplectic setting, allowing $\R_\mathcal{E}^d$ or $\C^d$ to replace $\R_\mathcal{S}^d$ in the definition unifies the notion of a frame across symplectic, real Euclidean, and complex vector spaces.
In other words, supplying absolute values to the inequality (\ref{eq:frameIneq}) does not change the definition in the complex or real Euclidean setting while allowing for a direct extension to the symplectic setting.
Likewise, the same definition of tightness provided above applies in the complex or real Euclidean setting.

\subsection{Basic Properties}\,

Several basic properties of frames in the complex or real Euclidean setting carry over to the symplectic setting with minimal modification. 
For example, a frame in the complex or real Euclidean setting can be thought of as simply a spanning set.
The same is true for a frame in $\R_\mathcal{S}^d$.

\begin{proposition}\label{prop:spanning}
    A sequence of vectors $\Phi = \{\varphi_i\}_{i=1}^n$ in $\R_\mathcal{S}^d$ is a frame if and only if $\Phi$ spans $\R_\mathcal{S}^d$.
\end{proposition}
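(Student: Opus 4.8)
The plan is to route both conditions through the frame operator $\Phi\Phi^\dagger$ and to reduce the symplectic rank computation to a standard Euclidean one. First I would rephrase the frame condition of Definition \ref{def:sympframe} spectrally: since $\Phi\Phi^\dagger$ is a $d\times d$ real matrix, its spectrum is a finite set, so positive bounds $A\leq|\lambda|\leq B$ can be chosen for all $\lambda\in\sigma(\Phi\Phi^\dagger)$ if and only if no eigenvalue vanishes, i.e.\ if and only if $\Phi\Phi^\dagger$ is invertible. (Given invertibility, take $A=\min_\lambda|\lambda|$ and $B=\max_\lambda|\lambda|$; conversely, a positive lower bound $A$ forces every $|\lambda|\geq A>0$.) Thus it suffices to prove that $\Phi\Phi^\dagger$ is invertible exactly when $\Phi$, viewed as the synthesis map $\R^n_\mathcal{E}\to\R^d_\mathcal{S}$, is surjective, which is the same as $\Phi$ spanning $\R^d_\mathcal{S}$.

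Next I would compute the rank of the frame operator. By Proposition \ref{prop:adjoint}(d), the analysis operator is $\Phi^\dagger=\Phi^\top\Omega$, so $\Phi\Phi^\dagger=\Phi\Phi^\top\Omega$. Since $\Omega$ is invertible, right multiplication by it preserves rank, giving $\operatorname{rank}(\Phi\Phi^\dagger)=\operatorname{rank}(\Phi\Phi^\top)$. The key step is then the standard real-Euclidean identity $\operatorname{rank}(\Phi\Phi^\top)=\operatorname{rank}(\Phi)$: if $\Phi\Phi^\top x=0$, then $\|\Phi^\top x\|^2=x^\top\Phi\Phi^\top x=0$, forcing $\Phi^\top x=0$, so $\ker(\Phi\Phi^\top)=\ker(\Phi^\top)$ and the ranks agree. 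Chaining these equalities yields $\operatorname{rank}(\Phi\Phi^\dagger)=\operatorname{rank}(\Phi)$.

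With this in hand the proposition follows quickly: the $d\times d$ matrix $\Phi\Phi^\dagger$ is invertible if and only if $\operatorname{rank}(\Phi\Phi^\dagger)=d$, equivalently $\operatorname{rank}(\Phi)=d$, equivalently $\Phi$ has full row rank and hence spans $\R^d_\mathcal{S}$. Combined with the spectral reformulation of the first paragraph, this gives the desired equivalence.

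I expect the main (and really the only) obstacle to be the rank computation $\operatorname{rank}(\Phi\Phi^\dagger)=\operatorname{rank}(\Phi)$. One might worry that, because the symplectic adjoint $\Phi^\dagger$ differs from the Euclidean transpose, the frame operator could acquire a kernel even when $\Phi$ is surjective. The resolution is precisely the factorization $\Phi^\dagger=\Phi^\top\Omega$: it isolates the genuinely Euclidean product $\Phi\Phi^\top$, where positive-definiteness of $\langle\cdot,\cdot\rangle$ guarantees the kernel collapses, from the harmless invertible twist $\Omega$. The symplectic form itself does not directly force $\ker(\Phi\Phi^\dagger)$ to be trivial, so recognizing that one must peel off $\Omega$ and defer to the Euclidean argument is the crux.
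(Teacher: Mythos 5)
Your proof is correct, and it reaches the conclusion by a genuinely different mechanism than the paper, at least in the nontrivial direction. You establish the single rank identity $\operatorname{rank}(\Phi\Phi^\dagger)=\operatorname{rank}(\Phi)$ by factoring the frame operator as $\Phi\Phi^\dagger=(\Phi\Phi^\top)\Omega$, discarding the invertible twist $\Omega$, and invoking positive semi-definiteness of the Euclidean matrix $\Phi\Phi^\top$; this one identity settles both implications at once. The paper splits the equivalence instead: its forward direction is the same rank inequality you use, but for the reverse direction it works on the Gram-matrix side, noting that surjectivity of $\Phi$ makes $\Phi^\dagger$ injective, hence $\ker\Phi=\ker\Phi^\dagger\Phi$, so rank--nullity gives $\operatorname{rank}(\Phi^\dagger\Phi)=d$; it then uses diagonalizability of the skew-symmetric matrix $\Phi^\dagger\Phi$ together with the fact that $\Phi^\dagger\Phi$ and $\Phi\Phi^\dagger$ share nonzero eigenvalues with multiplicity to conclude that $0\notin\sigma(\Phi\Phi^\dagger)$. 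Your route is shorter and more elementary---no spectral theory at all---and it pinpoints exactly where positive-definiteness of the Euclidean inner product is the engine behind the kernel collapse, which is indeed the crux you identified. What the paper's route buys is continuity with the rest of the section: the identity $\ker\Phi=\ker\Phi^\dagger\Phi$, the diagonalizability of skew-symmetric Gram matrices, and the transfer of nonzero spectra between $\Phi^\dagger\Phi$ and $\Phi\Phi^\dagger$ are exactly the tools redeployed in Proposition~\ref{thm:diag} and Proposition~\ref{prop:tight}, so its proof doubles as a warm-up for those arguments. Neither approach is more general than the other: both lean on the concrete model $\R_\mathcal{S}^d$, yours through the factorization $\Phi^\dagger=\Phi^\top\Omega$ and the paper's through skew symmetry of the Gram matrix.
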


\begin{proof}
Suppose that $\Phi$ is a frame in $\R_\mathcal{S}^d$, so that $\Phi\Phi^\dagger$ is invertible.
Then $\operatorname{rank}(\Phi\Phi^\dagger)=d$ bounds $\operatorname{rank}(\Phi)$ from below, so it must be that $\Phi$ is surjective.

Conversely, if $\Phi$ is surjective then $\Phi^\dagger$ is injective. 
Thus $\ker(\Phi) = \ker(\Phi^\dagger\Phi)$ and an application of rank-nullity yields
\[
d=\operatorname{rank}(\Phi) = \operatorname{rank}(\Phi^\dagger\Phi).
\]
Since $\Phi^\dagger\Phi$ is skew symmetric, it is diagonalizable.
Therefore, the sum of the algebraic multiplicities of its nonzero eigenvalues is $d$, and so it must be for $\Phi\Phi^\dagger$ as well. 
Thus $0\notin\sigma(\Phi\Phi^\dagger)$, or equivalently $\Phi$ is a frame.
\end{proof}

Treating frames in $\R_\mathcal{S}^d$ as spanning sets provides a simple way of identifying them, but doing so also hides important distinctions.
For instance, it does not make explicit their frame bounds, which we care about.
Moreover, when they are viewed only as matrices over $\R$, a frame in $\R_\mathcal{S}^d$ and a frame in $\R_\mathcal{E}^d$ may appear no different.
However, as the following example shows, it is clear that objects such as the frame operator and Gram matrix do not look the same when comparing between frames in these two different spaces.
This should be of no surprise, as both objects include information about the underlying form.

\begin{example}
\label{ex:basic}
    The frame 
    \[
    \Phi = \left[\begin{array}{rrr} 1 & 0 & 0 \\ 0 & 1 & 1 \end{array}\right]
    \]
    in $\R_\mathcal{S}^2$ has analysis operator, frame operator, and Gram matrix
    \[
    \Phi^\dagger = \left[\begin{array}{rr} 0 & 1 \\ -1 & 0 \\ -1 & 0 \end{array}\right], \qquad\Phi\Phi^\dagger = \left[\begin{array}{rr} 0 & 1 \\ -2 & 0 \end{array}\right], \qquad \Phi^\dagger\Phi = \left[\begin{array}{rrr} 0 & 1 & 1  \\ -1 & 0 & 0 \\ -1 & 0 & 0\end{array}\right]. 
    \]
    Note that $\sigma(\Phi\Phi^\dagger) = \{\pm i\sqrt{2}\}$, so $\Phi$ is $\sqrt{2}$-tight.
\end{example}

In the complex or real Euclidean setting, the frame operator and Gram matrix of a frame are both Hermitian.
Meanwhile, in the symplectic setting the Gram matrix is skew symmetric, while the example above shows that the frame operator is not necessarily symmetric, skew symmetric, or even normal.
However, the frame operator of a frame in real symplectic space is always diagonalizable.

\begin{proposition}
\label{thm:diag}
    If $\Phi$ is a frame in $\R_\mathcal{S}^d$, then $\Phi\Phi^\dagger$ is diagonalizable.
\end{proposition}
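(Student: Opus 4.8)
The plan is to exhibit $\Phi\Phi^\dagger$ as similar to a real skew-symmetric matrix, since every such matrix is normal and therefore diagonalizable over $\C$. First I would use the identity $\Phi^\dagger = \Phi^\top\Omega$ recorded after Definition \ref{def:sympframe} to rewrite the frame operator as
\[
\Phi\Phi^\dagger = \Phi\Phi^\top\Omega = S\Omega, \qquad S := \Phi\Phi^\top.
\]
The matrix $S$ is symmetric and positive semidefinite by construction, being an ordinary Euclidean Gram matrix. The point I would then press on is that because $\Phi$ is a frame, Proposition \ref{prop:spanning} says $\Phi$ spans $\R_\mathcal{S}^d$, so $\Phi$ has rank $d$; consequently $S=\Phi\Phi^\top$ is invertible and hence positive definite. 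This is the crux of the argument, as it is exactly what upgrades $S$ from positive semidefinite to positive definite and thereby furnishes a symmetric positive definite square root $S^{1/2}$ that is itself invertible.

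With $S^{1/2}$ in hand, I would perform the similarity
\[
\Phi\Phi^\dagger = S\Omega = S^{1/2}\bigl(S^{1/2}\Omega S^{1/2}\bigr)S^{-1/2},
\]
which displays $\Phi\Phi^\dagger$ as conjugate to $N:=S^{1/2}\Omega S^{1/2}$ via the invertible matrix $S^{1/2}$. A short computation using $\Omega^\top=-\Omega$ together with the symmetry of $S^{1/2}$ gives $N^\top = (S^{1/2})^\top\Omega^\top (S^{1/2})^\top = -S^{1/2}\Omega S^{1/2} = -N$, so $N$ is a real skew-symmetric matrix. Being skew-symmetric, $N$ is normal and hence unitarily diagonalizable over $\C$. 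Since diagonalizability is invariant under similarity, it follows that $\Phi\Phi^\dagger$ is diagonalizable, completing the proof.

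I expect no serious obstacle here; the only genuine content is the positive definiteness of $S$, and the rest is the square-root conjugation trick that symmetrizes the product $S\Omega$ into a skew-symmetric matrix. The one subtlety worth flagging is that the frame hypothesis cannot be dropped: it is precisely surjectivity of $\Phi$ (via Proposition \ref{prop:spanning}) that makes $S$ invertible, and hence $S^{-1/2}$ well defined, so the argument rests on the standing assumption that $\Phi$ is a frame rather than merely an arbitrary sequence of vectors.
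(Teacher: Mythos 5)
Your proof is correct, but it takes a genuinely different route from the paper's. You symmetrize the product: writing $\Phi\Phi^\dagger = S\Omega$ with $S = \Phi\Phi^\top$, using the frame hypothesis (via Proposition \ref{prop:spanning}) to get $S$ positive definite, and then conjugating by $S^{1/2}$ to exhibit $\Phi\Phi^\dagger$ as similar to the real skew-symmetric matrix $S^{1/2}\Omega S^{1/2}$, which is normal and hence diagonalizable over $\C$. The paper instead transfers eigenvectors across the product in the other order: it diagonalizes the Gram matrix $\Phi^\dagger\Phi$ (skew-symmetric, hence diagonalizable), pushes a basis of eigenvectors of its image forward through $\Phi$ to obtain $d$ eigenvectors of $\Phi\Phi^\dagger$, and uses injectivity of $\Phi^\dagger$ to verify their linear independence --- the classical $AB$ versus $BA$ spectral correspondence. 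Your argument is shorter and yields a stronger structural conclusion for free: $\Phi\Phi^\dagger$ is similar to a skew-symmetric matrix, so its eigenvalues are purely imaginary, a fact the paper has to observe separately just before this proposition. The paper's argument, on the other hand, makes explicit how eigenvectors of the frame operator arise from eigenvectors of the Gram matrix, which fits its overall program of reading frame-theoretic properties (tightness, equiangularity, factorization as in Theorem \ref{thm:factoring}) directly off the Gram matrix. You are also right to flag that positive definiteness of $S$ is the load-bearing step: for a non-spanning sequence $S^{1/2}$ is not invertible and the similarity collapses, which is consistent with the proposition requiring the frame hypothesis.
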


\begin{proof}
Let $\Phi$ be a $d\times n$ frame in $\R_\mathcal{S}^d$. 
Denote the distinct nonzero eigenvalues of $\Phi^\dagger\Phi$ by $\lambda_1,\dots,\lambda_r$. 
Since $\Phi^\dagger \Phi$ is skew symmetric, it is diagonalizable, hence its eigenvectors form a basis for its image. 
Thus for each $i=1,\dots,r$ we choose a basis $x_1^{\lambda_i},\dots,x_{m_i}^{\lambda_i}$ for the $\lambda_i$-eigenspace so that the collection of all such vectors forms a basis for the $d$-dimensional image of $\Phi^\dagger\Phi$.
It is straightforward to verify that the collection $$\Phi x_1^{\lambda_1}, \dots ,\Phi x_{m_1}^{\lambda_1}, \dots, \Phi x_1^{\lambda_r}, \dots ,\Phi x_{m_r}^{\lambda_r}$$ is a collection of $d$ eigenvectors for $\Phi\Phi^\dagger$, so it suffices to show that they are linearly independent.

Because $\Phi^\dagger$ is injective, we have
\[
\sum_{i=1}^r\sum_{j=1}^{m_{i}} c_{ij}\Phi x_j^{\lambda_i} = 0 \iff \Phi^\dagger \sum_{i=1}^r\sum_{j=1}^{m_i} c_{ij}\Phi x_j^{\lambda_i} = 0 \iff
\sum_{i=1}^r\lambda_i\sum_{j=1}^{m_i} c_{ij}x_j^{\lambda_i} = 0.
\]
Since the $\lambda_i$ are all nonzero and the $x_j^{\lambda_i}$ are linearly independent, it must be that $c_{ij}=0$ for all $i,j$.
\end{proof}

Frames in the complex and real Euclidean setting are commonly known to provide a reconstruction formula.
Given a frame $\{\psi_i\}_{i=1}^n$ in $\mathbb{F}^d$, where $\mathbb{F} \in \{\R_\mathcal{E},\C\}$, one has 
\[
x = \sum_{i=1}^n\langle \psi_i,x\rangle\psi_i' = \sum_{i=1}^n\langle\psi_i',x\rangle\psi_i \qquad \text{for any }x\in \mathbb{F}^d,
\]
for some \emph{dual frame} $\{\psi_i'\}_{i=1}^n$ of $\{\psi_i\}_{i=1}^n$. 
Frames in real symplectic space offer a similar reconstruction formula.

\begin{proposition}
    Let $\Phi = \{\varphi_i\}_{i=1}^n$ be a sequence of vectors in $\R_{\mathcal{S}}^d$. Then $\Phi$ is a frame if and only if there exists a frame $\{\varphi_i'\}_{i=1}^n$ in $\R_\mathcal{S}^d$ such that
    \[
    x = \sum_{i=1}^n[\varphi_i,x]\varphi_i' = -\sum_{i=1}^n [\varphi_i',x]\varphi_i \qquad \text{for any }x\in \R_\mathcal{S}^d.
    \]
\end{proposition}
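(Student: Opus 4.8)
The plan is to build an explicit dual frame in the forward direction and to read the frame property off the reconstruction identity in the converse. The entire argument rests on one algebraic fact with no Euclidean analog: the frame operator $S := \Phi\Phi^\dagger$ is \emph{skew-adjoint}, meaning $S^\dagger = -S$. I would establish this first. Applying Proposition~\ref{prop:adjoint}(b) gives $S^\dagger = (\Phi^\dagger)^\dagger\Phi^\dagger$, and since the synthesis operator $\Phi:\R_\mathcal{E}^n\to\R_\mathcal{S}^d$ has exactly one of its spaces equal to $\R_\mathcal{S}^d$, Proposition~\ref{prop:adjoint}(e) gives $(\Phi^\dagger)^\dagger = -\Phi$, whence $S^\dagger = -\Phi\Phi^\dagger = -S$.

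For the forward direction, suppose $\Phi$ is a frame, so $S$ is invertible. I would define the canonical dual by $\varphi_i' := S^{-1}\varphi_i$, i.e. the synthesis operator $\Phi' := S^{-1}\Phi$. Because $S^{-1}$ is invertible and $\Phi$ spans $\R_\mathcal{S}^d$ by Proposition~\ref{prop:spanning}, the sequence $\Phi'$ spans as well, so it is again a frame by Proposition~\ref{prop:spanning}. The first identity is then immediate: using $\Phi\Phi^\dagger x = \sum_i [\varphi_i,x]\varphi_i$, for any $x$ we have
\[
\sum_{i=1}^n [\varphi_i,x]\varphi_i' = S^{-1}\sum_{i=1}^n[\varphi_i,x]\varphi_i = S^{-1}\Phi\Phi^\dagger x = S^{-1}Sx = x.
\]
For the second identity I would recognize $\sum_i [\varphi_i',x]\varphi_i = \Phi(\Phi')^\dagger x$ and compute $(\Phi')^\dagger$. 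By Proposition~\ref{prop:adjoint}(b),(c) together with the skew-adjointness of $S$, we get $(\Phi')^\dagger = (S^{-1}\Phi)^\dagger = \Phi^\dagger(S^{-1})^\dagger = \Phi^\dagger(S^\dagger)^{-1} = -\Phi^\dagger S^{-1}$, so that $-\Phi(\Phi')^\dagger = \Phi\Phi^\dagger S^{-1} = SS^{-1} = I$, which is exactly $-\sum_i[\varphi_i',x]\varphi_i = x$.

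For the converse, suppose some frame $\{\varphi_i'\}$ satisfies the stated identities. Reading the first identity as an operator equation, $\sum_i[\varphi_i,x]\varphi_i' = \Phi'\Phi^\dagger x$, the hypothesis says $\Phi'\Phi^\dagger = I$ on $\R_\mathcal{S}^d$. Hence $\Phi^\dagger$ has a left inverse and is injective; since $\Phi^\dagger = \Phi^\top\Omega$ with $\Omega$ invertible, injectivity of $\Phi^\dagger$ is equivalent to injectivity of $\Phi^\top$, i.e. to surjectivity of $\Phi$. By Proposition~\ref{prop:spanning}, $\Phi$ is a frame.

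The only genuinely delicate point is the sign bookkeeping: the minus sign in the second reconstruction formula is forced by the skew-adjointness $S^\dagger = -S$, which itself comes from the $(A^\dagger)^\dagger = -A$ phenomenon of Proposition~\ref{prop:adjoint}(e). I expect tracking these signs correctly through the adjoint identities to be the main thing to watch; everything else parallels the classical canonical-dual-frame construction.
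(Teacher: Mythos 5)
Your proposal is correct and takes essentially the same route as the paper's proof: both construct the canonical dual $\varphi_i' = (\Phi\Phi^\dagger)^{-1}\varphi_i$, obtain the signed second identity from the skew-adjointness $(\Phi\Phi^\dagger)^\dagger = -\Phi\Phi^\dagger$ furnished by Proposition~\ref{prop:adjoint}, and settle the converse via Proposition~\ref{prop:spanning}. The only cosmetic difference is that you derive the second identity by computing $(\Phi')^\dagger$ as an operator, whereas the paper moves $F^{-1}$ across the symplectic form term by term; the substance is identical.
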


\begin{proof}
The reverse direction is an immediate consequence of Proposition \ref{prop:spanning}. 
For the forward direction, let $F = \Phi\Phi^\dagger$ be the frame operator of $\Phi$.
In the first equality, it suffices to find a $d\times n$ matrix $A = \left[\begin{array}{rrr}\varphi_1'&\cdots&\varphi_n'\end{array}\right]$ so that $A\Phi^\dagger = I_d$, the $d\times d$ identity matrix.
This holds when $A = F^{-1}\Phi$, and so we can take $\varphi_i' = F^{-1}\varphi_i$ for each $i=1,\dots,n$.

To see the second equality, first note that $F^\dagger = (\Phi^\dagger)^\dagger\Phi^\dagger = - \Phi\Phi^\dagger = -F$.
It follows that $(F^{-1})^\dagger = -F^{-1}$ and so 
\[
x = F\left(F^{-1}x\right) = \sum_{i=1}^n [\varphi_i,F^{-1}x]\varphi_i = \sum_{i=1}^n[-F^{-1}\varphi_i,x]\varphi_i = -\sum_{i=1}^n [\varphi_i',x]\varphi_i.
\]
\end{proof}

Lastly, we can discern important properties of a frame from its Gram matrix. 
In order for this to be useful, one must be able to retrieve the underlying frame from its Gram matrix.
This is possible both in the standard and symplectic settings, the latter of which we will now show.

In proving the result, we employ the spectral theorem for real skew symmetric matrices (see 2.5.11(b) in~\cite{horn2012matrix}) that states the following: 
a nonzero matrix \newline$A\in \R^{n\times n}$ with rank $2r$ is skew symmetric if and only if there exists a real orthogonal matrix $W$ such that 
\[
WAW^\top = 0_{n-2r} \oplus \mathcal{D}\Omega. 
\]
Here $\mathcal{D} = \operatorname{diag}(|\lambda_1|,|\lambda_1|, \ldots, |\lambda_r|,|\lambda_r|)$ where $\sigma(A) = \{0^{n-2r}, \pm i\lambda_1,\dots,\pm i\lambda_r\}$, counting multiplicities. 

\begin{theorem}
\label{thm:factoring}
    A matrix $G\in \R^{n\times n}$ is the Gram matrix of a frame in $\R_\mathcal{S}^d$ if and only if $G$ is skew symmetric with $\operatorname{rank}(G)=d$. Specifically, if $G$ is skew symmetric with $\sigma(G) = \{0^{n-d}, \pm i\lambda_1,\dots,\pm i\lambda_{d/2}\}$ then $G = \Phi^\dagger\Phi$ where $\Phi = DU$, $U U^\top = I_d$, and $D = \operatorname{diag}(\sqrt{|\lambda_1|},\sqrt{|\lambda_1|}, \ldots, \sqrt{|\lambda_{d/2}|},\sqrt{|\lambda_{d/2}|})$.
\end{theorem}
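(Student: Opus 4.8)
The plan is to prove the stated equivalence in two directions, with the forward implication following quickly from the earlier structural results and the reverse implication carrying the real content through the quoted spectral theorem, which also supplies the explicit factorization.

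For the forward direction, suppose $G = \Phi^\dagger\Phi$ is the Gram matrix of a frame $\Phi$ in $\R_\mathcal{S}^d$. Using $\Phi^\dagger = \Phi^\top\Omega$ from Proposition \ref{prop:adjoint}, I write $G = \Phi^\top\Omega\Phi$, and since $\Omega^\top = -\Omega$ this gives $G^\top = \Phi^\top\Omega^\top\Phi = -G$, so $G$ is skew symmetric. The rank claim is precisely the computation already carried out in the proof of Proposition \ref{prop:spanning}: a frame is surjective onto $\R_\mathcal{S}^d$, so $\Phi^\dagger$ is injective, $\ker(\Phi)=\ker(\Phi^\dagger\Phi)$, and rank-nullity yields $\operatorname{rank}(G) = \operatorname{rank}(\Phi^\dagger\Phi) = \operatorname{rank}(\Phi) = d$.

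For the reverse direction, suppose $G$ is skew symmetric with $\operatorname{rank}(G)=d$; note $d$ is automatically even, matching the requirement that $\R_\mathcal{S}^d$ have even dimension. Applying the quoted spectral theorem produces an orthogonal $W$ with $WGW^\top = 0_{n-d}\oplus\mathcal{D}\Omega$. I then partition $W = \left[\begin{smallmatrix} W_1 \\ W_2\end{smallmatrix}\right]$ with $W_2$ the bottom $d\times n$ block; conjugating back and discarding the zero block gives $G = W_2^\top(\mathcal{D}\Omega)W_2$. The key algebraic observation is that the square-root diagonal $D$ is scalar on each $2\times 2$ block of the block-diagonal $\Omega$, hence commutes with $\Omega$, so $D\Omega D = D^2\Omega = \mathcal{D}\Omega$. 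Setting $U := W_2$ and $\Phi := DU$, orthogonality of $W$ forces $UU^\top = I_d$ (the rows of $W_2$ are $d$ of the $n$ orthonormal rows of $W$), and therefore $\Phi^\dagger\Phi = U^\top D\Omega D\,U = W_2^\top(\mathcal{D}\Omega)W_2 = G$. Finally, since $D$ is invertible (all $\lambda_i\neq 0$) and $W_2$ has full row rank $d$, the product $\Phi = DU$ has rank $d$, so Proposition \ref{prop:spanning} confirms that $\Phi$ is a genuine frame whose Gram matrix is $G$.

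The only real obstacle is bookkeeping: aligning the ordering of the $|\lambda_i|$ in $\mathcal{D}$ and $D$ with the $2\times 2$ block structure of $\Omega$, and recognizing that passing from $WGW^\top$ back to $G$ amounts to selecting the submatrix $W_2$. Once the identity $D\Omega D = \mathcal{D}\Omega$ and the identification $U = W_2$ are in hand, everything reduces to a direct verification using $\Phi^\dagger = \Phi^\top\Omega$ and $UU^\top = I_d$; I anticipate no genuine difficulty beyond ensuring the conventions of the spectral theorem match those of the symplectic form on $\R_\mathcal{S}^d$.
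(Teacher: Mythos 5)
Your proposal is correct and follows essentially the same route as the paper's proof: both directions match, with the reverse direction using the spectral theorem for real skew symmetric matrices, partitioning the orthogonal matrix $W$ to extract the bottom block $U$ with $UU^\top = I_d$, and invoking Proposition \ref{prop:spanning} to conclude $\Phi = DU$ is a frame. The only difference is cosmetic: you spell out the forward direction (which the paper calls clear) and make explicit the commutation $D\Omega D = D^2\Omega = \mathcal{D}\Omega$ that the paper uses implicitly when writing $G = (DU)^\dagger DU$.
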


\begin{proof}
    The forward direction is clear. 
    In the reverse direction, let $G\in \R^{n\times n}$ be a skew symmetric matrix with $\operatorname{rank}(G) = d$ and $\sigma(G) = \{0^{n-d},\pm i\lambda_1,\dots,\pm i\lambda_{d/2}\}$.
    Let $D$ be as in the theorem statement. 
    By the spectral theorem for real skew symmetric matrices, there exists a real orthogonal matrix $W = \begin{bmatrix} V \\ U\end{bmatrix}$ in $\R^{n\times n}$, where $V\in \R^{(n-d)\times n}$ and $U\in \R^{d\times n}$ so that 
    \[ G = \begin{bmatrix}V^\top & U^\top \end{bmatrix}\begin{bmatrix} 0 & 0 \\ 0 & D^2\Omega\end{bmatrix}\begin{bmatrix}V \\ U \end{bmatrix} = (DU)^\dagger DU.\]
    We have $U U^\top = I_d$, hence $DU$ is surjective. 
    It follows from Proposition \ref{prop:spanning} that $\Phi = DU$ is a $d \times n$ frame in $\R_\mathcal{S}^d$. 
\end{proof}

Theorem \ref{thm:factoring} becomes essential for much of the rest of the paper, as it is the primary way we obtain frames in the symplectic setting. 
Moreover, the important properties we mentioned earlier are the concepts of tightness and \emph{equiangularity} (see Definition \ref{def:equiangular}).
We will see that these properties can be completely determined by the Gram matrix of a frame, so moving forward we begin to shift the object of study to Gram matrices.

As an important aside, while the above extracts a specific frame from a skew symmetric matrix, the underlying frame of the Gram matrix is not unique.
If $\Phi$ is a frame in $\R_\mathcal{S}^d$ that has Gram matrix $\Phi^\dagger\Phi$, then so does $M\Phi$ for any symplectic transformation $M$.
Therefore, in studying a frame through its Gram matrix, we are studying an equivalence class of frames determined by its Gram matrix.
This idea also offers insight into the degree to which a frame in $\R_\mathcal{S}^d$ and one in $\R_\mathcal{E}^d$ are equivalent.

\begin{theorem}
\label{thm:sympreal}
Let $\Phi$ be a sequence of vectors in $\R_\mathcal{S}^d$. 
Then $\Phi$ is a frame in $\R_\mathcal{S}^d$ with frame bounds $A,B$ if and only if there exists a symplectic transformation $M$ and a frame $\Psi$ in $\R_\mathcal{E}^d$ with frame bounds $A,B$ so that $\Phi = M\Psi$.
\end{theorem}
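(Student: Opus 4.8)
The plan is to push everything through the Euclidean frame operator $S := \Phi\Phi^\top$ and the identity $\Phi\Phi^\dagger = \Phi\Phi^\top\Omega = S\Omega$, which is immediate from $\Phi^\dagger = \Phi^\top\Omega$ (Proposition \ref{prop:adjoint}). First I would record a spectral dictionary relating the symplectic and Euclidean pictures. Whenever $\Phi$ spans, $S$ is symmetric positive definite of rank $d$, and $S\Omega = S^{1/2}(S^{1/2}\Omega S^{1/2})S^{-1/2}$ is similar to the skew-symmetric matrix $S^{1/2}\Omega S^{1/2}$; hence
\[
\sigma(\Phi\Phi^\dagger)=\sigma(S\Omega)=\{\pm i\nu_1,\dots,\pm i\nu_{d/2}\},
\]
where the positive numbers $\nu_k$ are the singular values of $S^{1/2}\Omega S^{1/2}$ (the symplectic eigenvalues of $S$). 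Using $\|\Omega\|=\|\Omega^{-1}\|=1$ and submultiplicativity of the spectral norm, I would then establish the sandwich $\lambda_{\min}(S)\le\nu_k\le\lambda_{\max}(S)$: indeed $\max_k\nu_k=\|S^{1/2}\Omega S^{1/2}\|\le\lambda_{\max}(S)$ and $\min_k\nu_k=\|(S^{1/2}\Omega S^{1/2})^{-1}\|^{-1}\ge\lambda_{\min}(S)$. This dictionary reduces both directions to linear algebra over $\R_\mathcal{E}^d$.

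For the reverse direction, suppose $\Phi=M\Psi$ with $M$ symplectic and $\Psi$ a Euclidean frame with bounds $A,B$, so $T:=\Psi\Psi^\top$ is positive definite with $\sigma(T)\subseteq[A,B]$. Since $M$ is invertible and $\Psi$ spans $\R^d$, so does $\Phi$, and $S=MTM^\top$. Rearranging $M^\top\Omega M=\Omega$ to $M^\top\Omega=\Omega M^{-1}$ gives
\[
\Phi\Phi^\dagger = MTM^\top\Omega = M(T\Omega)M^{-1},
\]
so $\Phi\Phi^\dagger$ is similar to $T\Omega$. Applying the dictionary to $T$ yields $\sigma(\Phi\Phi^\dagger)=\{\pm i\tau_k\}$ with $A\le\lambda_{\min}(T)\le\tau_k\le\lambda_{\max}(T)\le B$, so $\Phi$ is a frame in $\R_\mathcal{S}^d$ with bounds $A,B$.

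The forward direction carries the real work. Given $\Phi$ a frame in $\R_\mathcal{S}^d$ with bounds $A,B$, the dictionary gives $\sigma(\Phi\Phi^\dagger)=\{\pm i\nu_k\}$ with $A\le\nu_k\le B$. Setting $\Psi=M^{-1}\Phi$ produces $\Psi\Psi^\top=M^{-1}S(M^{-1})^\top$, so I need a symplectic $M$ that congruence-diagonalizes $S$ to its symplectic eigenvalues, i.e. $S=M\mathcal{N}M^\top$ with $\mathcal{N}=\operatorname{diag}(\nu_1,\nu_1,\dots,\nu_{d/2},\nu_{d/2})$. This is exactly Williamson's theorem, but since the paper already invokes the spectral theorem for real skew-symmetric matrices, I would build $M$ explicitly: that theorem applied to $S^{1/2}\Omega S^{1/2}$ furnishes an orthogonal $O$ with $O\bigl(S^{1/2}\Omega S^{1/2}\bigr)O^\top=\mathcal{N}\Omega$, and then $M:=S^{1/2}O^\top\mathcal{N}^{-1/2}$ satisfies $M\mathcal{N}M^\top=S$ and $M^\top\Omega M=\mathcal{N}^{-1/2}(\mathcal{N}\Omega)\mathcal{N}^{-1/2}=\Omega$, the last equality using that $\mathcal{N}$ is block-scalar and hence commutes with $\Omega$. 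With this $M$ we get $\Psi\Psi^\top=\mathcal{N}$, whose eigenvalues are the $\nu_k\in[A,B]$, so $\Psi$ is a Euclidean frame with bounds $A,B$ and $\Phi=M\Psi$.

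The main obstacle, and the step I would treat most carefully, is precisely this forward construction: one cannot take an arbitrary symplectic $M$, since congruence by a non-orthogonal matrix distorts the ordinary spectrum of $S$. What saves the argument is that the particular symplectic congruence of Williamson's theorem carries $S$ to the diagonal of its symplectic eigenvalues, which are exactly the moduli governing the symplectic frame bounds; verifying that the explicit $M=S^{1/2}O^\top\mathcal{N}^{-1/2}$ is symplectic—hinging on the commutation of the block-scalar $\mathcal{N}$ with $\Omega$—is the crux.
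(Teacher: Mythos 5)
Your proof is correct, and in the forward direction it takes a genuinely different route from the paper's. The paper applies its Gram-matrix factorization (Theorem \ref{thm:factoring}) to the $n\times n$ matrix $\Phi^\dagger\Phi$, takes $\Psi = DU$ from that factorization, and defines $M$ abstractly by $\psi_j\mapsto\varphi_j$ extended linearly; the work there is checking that $M$ is well defined (via $\ker\Phi=\ker\Psi$) and symplectic (via equality of the Gram matrices). You instead stay on the $d\times d$ side and in effect prove Williamson's theorem for the Euclidean frame operator $S=\Phi\Phi^\top$: the skew-symmetric spectral theorem applied to $S^{1/2}\Omega S^{1/2}$ produces an explicit symplectic $M=S^{1/2}O^\top\mathcal{N}^{-1/2}$ with $S=M\mathcal{N}M^\top$, and $\Psi=M^{-1}\Phi$ then has diagonal Euclidean frame operator $\mathcal{N}$. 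Your verification that $M^\top\Omega M=\Omega$ (hinging on the block-scalar $\mathcal{N}^{\pm1/2}$ commuting with $\Omega$) is sound, as is the dictionary $\sigma(S\Omega)=\{\pm i\nu_k\}$ with $\lambda_{\min}(S)\le\nu_k\le\lambda_{\max}(S)$. In the reverse direction the two arguments carry essentially the same content: your sandwich on the symplectic eigenvalues of $T=\Psi\Psi^\top$ is the paper's chain of singular-value inequalities, $\sigma_{\min}(\Psi\Psi^\top)\le|\lambda|_{\min}(\Psi\Psi^\dagger)$ and $|\lambda|_{\max}(\Psi\Psi^\dagger)\le\sigma_{\max}(\Psi\Psi^\top)$, in different clothing, though you route it through the similarity $\Phi\Phi^\dagger=M(T\Omega)M^{-1}$ where the paper uses equality of Gram matrices. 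What your route buys is a closed-form $M$, a uniform spectral dictionary serving both directions, and contact with standard symplectic linear algebra; what the paper's route buys is avoidance of matrix square roots and reuse of Theorem \ref{thm:factoring}, which it needs elsewhere anyway, with the construction staying at the level of the frame vectors themselves.
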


\begin{proof}
    For the forward direction, apply Theorem \ref{thm:factoring} to $\Phi^\dagger\Phi$. 
    The resulting frame in $\R_\mathcal{S}^d$, given by $\Psi = DU$, is also a frame in $\R_\mathcal{E}^d$.
    Denote $\Phi = \{\varphi_j\}_{j=1}^n$ and $\Psi = \{\psi_j\}_{j=1}^n$ and define the map $M:\R_\mathcal{S}^d\to\R_\mathcal{S}^d$ by $M(\psi_j) = \varphi_j$, expanding linearly. 
    To see this is well-defined, take $\sum_{j=1}^n a_j\psi_j = \sum_{j=1}^n b_j\psi_j$ and observe
    
    \begin{center}
    \resizebox{\textwidth}{!}{$
    M\left(\sum a_j\psi_j\right)-M\left(\sum b_j\psi_j\right) = 0 \iff \sum (a_j-b_j)\varphi_j = 0 \iff \begin{bmatrix}a_1-b_1 \\
    \cdots \\
    a_n-b_n
    \end{bmatrix}
    \in \ker \Phi.
    $}
    \end{center}
    
    Since $\Phi^\dagger$ and $\Psi^\dagger$ are injective, we have $\ker \Phi = \ker \Phi^\dagger\Phi = \ker \Psi^\dagger\Psi = \ker \Psi$.
    This proves $M$ is well-defined and clearly $\Phi = M\Psi$. 
    Moreover, $M$ is symplectic, as for any $x = \sum_{j=1}^na_j\psi_j$ and $y=\sum_{j=1}^nb_j\psi_j$ in $\R_\mathcal{S}^d$ we have 
    \[
    [Mx,My] = \sum_{i,j=1}^na_ib_j[\varphi_i,\varphi_j] = \sum_{i,j=1}^na_ib_j[\psi_i,\psi_j] = [x,y], 
    \]
    where the middle equality follows from the fact that $\Phi$ and $\Psi$ have the same Gram matrix over $\R_\mathcal{S}^d$.
    Finally, recall that $UU^\top = I_d$ hence
    \[
    \Psi\Psi^\top = DUU^\top D = D^2.
    \]
    The eigenvalues of $\Psi\Psi^\top$ are just its positive diagonal entries, which are exactly the absolute values of the eigenvalues of $\Phi\Phi^\dagger$.
    Thus $\Psi$ and $\Phi$ share frame bounds.

    For the reverse direction, let $M$ be a symplectic transformation and $\Psi$ be a $d\times n$ frame over $\R_{\mathcal{E}}^d$ with frame bounds $A$ and $B$ such that $\Phi = M\Psi$.
    It suffices to verify that $\Phi$ has frame bounds $A$ and $B$.

    Given any matrix $W$, denote by $|\lambda|_{\min}(W)$ and $|\lambda|_{\max}(W)$ the minimum and maximum absolute value of the eigenvalues of $W$, and denote by $\sigma_{\min}(W)$ and $\sigma_{\max}(W)$ the minimum and maximum singular values of $W$.
    First, note that $\sigma(\Phi\Phi^\dagger) = \sigma(\Psi\Psi^\dagger)$ since $\Phi^\dagger\Phi = \Psi^\dagger\Psi$.
    Because $\Psi\Psi^\top$ is positive definite, its eigenvalues and singular values coincide. 
    Furthermore, the singular values of $\Psi\Psi^\top$ and $\Psi\Psi^\dagger=\Psi\Psi^\top\Omega$ are the same, since $\Omega^\top\Omega = I_d$.
    Observe
    \[
    A\leq |\lambda|_{\min}(\Psi\Psi^\top) =\sigma_{\min}(\Psi\Psi^\top)= \sigma_{\min}(\Psi\Psi^\dagger)\leq |\lambda|_{\min}(\Psi\Psi^\dagger) = |\lambda|_{\min}(\Phi\Phi^\dagger),
    \]
    while also
    \[
    |\lambda|_{\max}(\Phi\Phi^\dagger) = |\lambda|_{\max}(\Psi\Psi^\dagger)\leq\sigma_{\max}(\Psi\Psi^\dagger) =\sigma_{\max}(\Psi\Psi^\top) =|\lambda|_{\max}(\Psi\Psi^\top)\leq B.
    \] 
    Therefore, we conclude that $A$ and $B$ are frame bounds for $\Phi$ over $\R_\mathcal{S}^d$.
\end{proof}

\subsection{Properties of Tight Frames in Real Symplectic Space}\,

Recall that a $c$-tight frame is one whose frame bounds $A$ and $B$ can be chosen as $A = B = c$ for some $c>0$.

\begin{example}\label{ex:tight}
    The matrix 
    \[
     G = \frac{1}{\sqrt{5}}\left[\begin{array}{rrr} 
     0 & 0 & -2 \\
     0 & 0 & 1 \\
     2 & -1 & 0
    \end{array} \right]
    \]
    is the Gram matrix of a $1$-tight frame in $\R_{S}^2$. 
    To see this, notice that the $\operatorname{rank}(G)=2$ so that applying Theorem \ref{thm:factoring} yields a frame 
    \[ 
    \Phi = \frac{1}{\sqrt{5}}\left[\begin{array}{rrr} -2 & 1 & 0 \\
    0 & 0 & \sqrt{5}
    \end{array}\right] \quad \text{which has frame operator } \quad \Phi\Phi^\dagger = \left[\begin{array}{rr} 0 & 1\\ -1 & 0 \end{array}\right].
    \]
    Although we have given $\Phi$ explicitly, this is not needed to see that any frame with Gram matrix $G$ is $1$-tight. 
    Theorem \ref{thm:factoring} ensures that $G=\Phi^\dagger\Phi$ for some $2\times 3$ frame $\Phi$ over $\R_\mathcal{S}^{2}$, while we know that $\sigma(\Phi^\dagger\Phi) = \{0, \pm i\}$. 
    Therefore it must be that $\sigma(\Phi\Phi^\dagger) = \{\pm i\}$.
\end{example}

In general, it is enough to consider only the Gram matrix of a frame when determining tightness. 
If $G$ is the Gram matrix of a $c$-tight frame in $\R_\mathcal{S}^d$, then $\sigma(G)\subseteq\{0,\pm ic\}$, ignoring multiplicities.
Meanwhile, the diagonalizability of $G$ implies that its minimal polynomial splits over $\C$. 
In particular, $G$ satisfies $G^3=-c^2G$. 
This provides a simple algebraic condition on $G$ when checking if the underlying frame is tight.

The following establishes the observation above in addition to symplectic versions of a few properties of tight frames in the complex or real Euclidean setting.

\begin{proposition}
\label{prop:tight}
    Let $\Phi$ be a sequence of $n$ vectors in $\R_\mathcal{S}^d$ and let $c>0$. The following are equivalent:
    \begin{enumerate}
        \item[(i)] $\Phi$ is a $c$-tight frame.
        \item[(ii)] $\Phi = M\Psi$, for some symplectic transformation $M$ and real $c$-tight frame $\Psi$.
        \item[(iii)] $(\Phi\Phi^\dagger)^2 = -c^2I_d$. 
        \item[(iv)] $(\Phi^\dagger\Phi)^3 = -c^2\Phi^\dagger\Phi$ with $\operatorname{rank}(\Phi^\dagger\Phi)=d$. 
        \item[(v)] The nonzero singular values of $\Phi^\dagger\Phi$ are $\sigma_1=\cdots=\sigma_d = c$.
    \end{enumerate}
\end{proposition}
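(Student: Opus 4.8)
The plan is to dispatch the equivalence (i) $\Leftrightarrow$ (ii) separately as a direct specialization of Theorem \ref{thm:sympreal}, and then to run the cyclic chain (i) $\Rightarrow$ (iii) $\Rightarrow$ (iv) $\Rightarrow$ (v) $\Rightarrow$ (i) among the three spectral conditions. The common engine throughout is that both operators are diagonalizable with purely imaginary spectrum: $\Phi\Phi^\dagger$ by Proposition \ref{thm:diag}, and $\Phi^\dagger\Phi$ because the Gram matrix is real and skew symmetric; moreover $\Phi\Phi^\dagger$ and $\Phi^\dagger\Phi$ share the same nonzero eigenvalues with multiplicities, and (as noted after Definition \ref{def:sympframe}) the eigenvalues of $\Phi\Phi^\dagger$ are purely imaginary. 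For (i) $\Leftrightarrow$ (ii), I would simply invoke Theorem \ref{thm:sympreal} with $A=B=c$: a frame $\Psi$ in $\R_\mathcal{E}^d$ with equal frame bounds $c$ is exactly a $c$-tight frame there (its frame operator $\Psi\Psi^\top$ has every eigenvalue equal to $c$), so the statement of that theorem reads verbatim as (i) $\Leftrightarrow$ (ii).

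For the spectral cycle, I would argue as follows. For (i) $\Rightarrow$ (iii): a $c$-tight frame has $\Phi\Phi^\dagger$ invertible with every eigenvalue of modulus $c$; since these eigenvalues are purely imaginary they lie in $\{ic,-ic\}$, so $(\Phi\Phi^\dagger)^2 v = (\pm ic)^2 v = -c^2 v$ on each eigenvector, and diagonalizability upgrades this to $(\Phi\Phi^\dagger)^2 = -c^2 I_d$. For (iii) $\Rightarrow$ (iv): the identity $(\Phi\Phi^\dagger)^2=-c^2I_d$ makes $\Phi\Phi^\dagger$ invertible, forcing $\operatorname{rank}(\Phi^\dagger\Phi)=d$; the nonzero eigenvalues of $\Phi^\dagger\Phi$ coincide with the eigenvalues $\pm ic$ of $\Phi\Phi^\dagger$, and evaluating the polynomial $t^3+c^2 t$ on a diagonalizing basis of $\Phi^\dagger\Phi$ (eigenvalues $0$ and $\pm ic$) yields $(\Phi^\dagger\Phi)^3=-c^2\Phi^\dagger\Phi$.

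For (iv) $\Rightarrow$ (v): writing $G=\Phi^\dagger\Phi$, the identity $G^3=-c^2G$ forces every nonzero eigenvalue to satisfy $\lambda^2=-c^2$, and the rank clause pins down exactly $d$ such eigenvalues; since $G$ is skew symmetric, hence normal, its singular values are the absolute values of its eigenvalues, producing exactly $d$ nonzero singular values all equal to $c$. For (v) $\Rightarrow$ (i): reading this backwards through normality, $G$ has exactly $d$ nonzero eigenvalues, each purely imaginary of modulus $c$, so $\operatorname{rank}(\Phi^\dagger\Phi)=d$ and $\Phi\Phi^\dagger$ is invertible; Proposition \ref{prop:spanning} then makes $\Phi$ a frame, and its spectrum $\{\pm ic\}$ shows it is $c$-tight.

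I expect the main obstacle to be the bookkeeping between the $d\times d$ frame operator and the $n\times n$ Gram matrix. Because $\Phi^\dagger\Phi$ carries a kernel of dimension $n-d$ while $\Phi\Phi^\dagger$ does not, the \emph{square} identity of (iii) must become the \emph{cube} identity of (iv), with the extra factor absorbing the zero eigenvalue; getting the exponents and the rank clause exactly right, and justifying that these polynomial identities transfer faithfully between the two operators via their shared nonzero spectrum together with diagonalizability, is the step that requires the most care. The singular-value translation in (iv) $\Leftrightarrow$ (v) is comparatively routine once normality of the skew symmetric $\Phi^\dagger\Phi$ is invoked.
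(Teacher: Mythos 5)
Your proposal is correct and takes essentially the same approach as the paper: (i) $\Leftrightarrow$ (ii) is Theorem \ref{thm:sympreal} specialized to $A=B=c$, and the spectral equivalences among (iii), (iv), (v) rest on the same facts the paper uses (diagonalizability, the purely imaginary shared nonzero spectrum of $\Phi\Phi^\dagger$ and $\Phi^\dagger\Phi$, and normality of the skew symmetric Gram matrix so that singular values are eigenvalue moduli). The only cosmetic difference is the chain structure: the paper runs one cycle (i) $\Rightarrow$ (ii) $\Rightarrow$ (iii) $\Rightarrow$ (iv) $\Rightarrow$ (v) $\Rightarrow$ (i), obtaining (iii) from (ii) by the algebraic identity $\Psi\Psi^\top = cI_d$ together with symplecticity of $M$, whereas you get (iii) directly from (i) via Proposition \ref{thm:diag}; both are valid.
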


\begin{proof}
    The implication $(i) \Rightarrow (ii)$ is given by Theorem \ref{thm:sympreal}. 
    Meanwhile $(ii) \Rightarrow (iii)$ is straightforward after applying the identity $\Psi\Psi^\top = cI_d$.
    For $(iii) \Rightarrow (iv)$, a short calculation gives $(\Phi^\dagger\Phi)^3 = -c^2\Phi^\dagger\Phi$, while 
    \[
    d = \operatorname{rank}(\Phi\Phi^\dagger) \leq \operatorname{rank}(\Phi) \leq d
    \]
    implies that $\Phi$ is a frame by Proposition \ref{prop:spanning}.
    Hence $\ker \Phi = \ker \Phi^\dagger\Phi$ and $\operatorname{rank}(\Phi^\dagger\Phi) = d$.
    
    To see $(iv)\Rightarrow (v)$, we first note that $\Phi^\dagger\Phi$ has exactly $d$ nonzero singular values on account of its rank.
    Now, observe that the singular values of $\Phi^\dagger\Phi$ are the square roots of the eigenvalues of $-(\Phi^\dagger\Phi)^2$.
    Recall that $\Phi^\dagger\Phi$ is real and skew symmetric, so that it is diagonalizable and its eigenvalues come in purely imaginary conjugate pairs. 
    Moreover, $\ker\Phi = \ker\Phi^\dagger\Phi$ implies that the sum of the algebraic multiplicities of the nonzero eigenvalues of $\Phi^\dagger\Phi$ is $d$.
    Therefore $\sigma(\Phi^\dagger\Phi) = \{ic^{d/2}, -ic^{d/2}, 0^{n-d}\}$ (here the exponents represent multiplicities) and so the eigenvalues of $-(\Phi^\dagger\Phi)^2$ are $c^2$ with multiplicity $d$ and $0$ with multiplicity $n-d$.
    It follows that the $d$ nonzero singular values of $\Phi^\dagger\Phi$ are all equal to $c$.

    Finally, to see that $(v) \implies (i)$, we observe that if $\Phi^\dagger\Phi$ has exactly $d$ nonzero singular values equal to $c$, then for reasons provided in the paragraph above, we see that $\sigma(\Phi^\dagger\Phi) = \{ic^{d/2}, -ic^{d/2}, 0^{n-d}\}$.
    Since $\Phi^\dagger\Phi$ shares nonzero eigenvalues (including algebraic multiplicities) with the $d\times d$ symplectic frame operator $\Phi\Phi^\dagger$, it follows that $|\lambda|=c$ for all $\lambda\in \sigma(\Phi\Phi^\dagger)$.
    That is, $\Phi$ is a $c$-tight frame.
    \end{proof}

\begin{remark}
    A frame in the complex or real Euclidean setting is tight if and only if its frame operator is a positive multiple of the identity.
    Item $(ii)$ above shows that a frame in $\R_\mathcal{S}^d$ is tight if and only if there exists an equivalent frame whose frame operator is a positive multiple of $\Omega$. 
    Specifically, if $\Phi= M_0\Psi$ for some $c$-tight frame $\Psi$ in $\R_{\mathcal{E}}^d$, then $M_0^{-1}\Phi$ has frame operator $c\Omega$.
    The additional condition of equivalence is needed here: for instance, Example~\ref{ex:basic} shows a symplectic-tight frame (in fact, by definition all frames in $\R_\mathcal{S}^2$ must be tight) whose frame operator is not a multiple of $\Omega$.
\end{remark}

For the remainder of the paper, we use the condition given by $(iv)$ in Proposition \ref{prop:tight} to determine if a frame is tight. 
The usefulness of this characterization becomes apparent later on.

\section{Equiangular Tight Frames in Real Symplectic Space}
In this section, we discuss equiangular and equiangular tight frames in the symplectic setting.
At the end, we prove both of our main results.

In the real setting, an equiangular frame is one whose columns are scaled to be unit-norm and whose Gram matrix has off-diagonal entries with uniform modulus. 
This informs an analogous definition.

\begin{definition}\label{def:equiangular}
    A sequence of vectors $\Phi = \{\varphi_i\}_{i=1}^n$ in $\R_{\mathcal{S}}^d$ is \textbf{equiangular} if there exists an $\mu>0$ such that $|\Phi^\dagger\Phi_{ij}| = \mu$ whenever $i\neq j$.
\end{definition}

In the symplectic setting, we lack a norm, and in particular the diagonal entries of the Gram matrix are always zero. 
Therefore by the definition above, we may equally scale the vectors of an equiangular sequence to achieve another equiangular sequence, and this is something we will occasionally do in the proofs of results to follow. 

In general, equiangularity alone is not of interest to us.
Rather, we want to find \textbf{equiangular tight frames} (\textbf{ETF}) in $\R_\mathcal{S}^d$: that is, frames that are both tight and equiangular in the symplectic setting.
These objects, in particular, can be characterized as solutions to optimization problems. 
To that end, we define the $p$th order symplectic frame potential analogously to the frame potential found in the complex or real Euclidean setting (see~\cite{benedetto2003finite, renes2004symmetric, oktay2007frame}).

\begin{definition}
    The \textbf{$p$th order symplectic frame potential} of a sequence \newline $\Phi = \{\varphi_i\}_{i=1}^n$ in $\R_\mathcal{S}^d$ is defined as
    \[ 
    \mathcal{SF}_p(\Phi) = \begin{cases}\displaystyle\sum_{i,j=1}^n\big|[\varphi_i,\varphi_j]\big|^{2p} &\text{ for } 1\leq p <\infty,\\[15pt] \displaystyle\max_{i\neq j} \big|[\varphi_i,\varphi_j]\big| & \text{ for } p=\infty.\end{cases}
    \]
\end{definition}

Results in the complex and real Euclidean settings analogous to the following theorem involve restricting an ETF to have unit-norm vectors. 
Here, fixing the nuclear norm of $\Phi^\dagger\Phi$ is the analogous restriction.

\begin{theorem}\label{thm:optimETF}
    Let $\Phi = \{\varphi_i\}_{i=1}^n$ be a sequence of $n\geq d$ vectors in $\R_\mathcal{S}^d$, with $\|\Phi^\dagger\Phi\|_*= \sqrt{dn(n-1)}$ (where $\|\cdot\|_*$ denotes the nuclear norm). 
    \begin{enumerate}
        \item[(a)] The first order symplectic frame potential of $\Phi$ satisfies
        \[\mathcal{SF}_1(\Phi) \geq \sqrt{\frac{n(n-1)}{d}},\]
        and equality holds if and only if $\Phi$ is a tight frame.
        \item[(b)] Let $p\in (1,\infty]$. 
        Then
        \[
        \mathcal{SF}_p(\Phi)\geq \begin{cases} n(n-1)& \text{if }1< p < \infty, \\
        1 & \text{if } p = \infty.
        \end{cases}
        \]
        and equality holds if and only if $\Phi$ is an equiangular tight frame.
    \end{enumerate}
\end{theorem}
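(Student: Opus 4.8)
The plan is to transfer the entire problem to the Gram matrix $G=\Phi^\dagger\Phi$. This matrix is real skew symmetric, it has zero diagonal (the form is alternating), and its rank is at most $d$ since $\Phi\in\R^{d\times n}$. Because the diagonal vanishes, for finite $p$ we have $\mathcal{SF}_p(\Phi)=\sum_{i\neq j}|G_{ij}|^{2p}$, a sum of exactly $n(n-1)$ nonnegative terms; in particular $\mathcal{SF}_1(\Phi)=\|G\|_F^2$ is the sum of the squares of the singular values of $G$, while the normalization fixes the sum of those singular values, $\|G\|_*=\sqrt{dn(n-1)}$.

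For part~(a) I would play the Frobenius norm against the nuclear norm through the rank bound. Writing $\sigma_1,\dots,\sigma_{\operatorname{rank}(G)}$ for the nonzero singular values, Cauchy--Schwarz gives $\|G\|_*^2=\big(\sum_k\sigma_k\big)^2\le \operatorname{rank}(G)\sum_k\sigma_k^2\le d\,\|G\|_F^2$, so that $\mathcal{SF}_1(\Phi)=\|G\|_F^2\ge \|G\|_*^2/d=n(n-1)$. Equality demands that both inequalities saturate at once: Cauchy--Schwarz equality forces all nonzero singular values to be equal, and saturating the rank bound forces $\operatorname{rank}(G)=d$. Thus $G$ has exactly $d$ equal nonzero singular values, which is precisely the tightness criterion of Proposition~\ref{prop:tight}(v). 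Managing this two-step equality case, rather than a single inequality, is the part I would treat most carefully.

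For the finite-$p$ half of part~(b) I would feed part~(a) into a convexity estimate. Applying Jensen's inequality to $t\mapsto t^p$ over the $n(n-1)$ values $|G_{ij}|^2$ yields $\sum_{i\neq j}|G_{ij}|^{2p}\ge \big(n(n-1)\big)^{1-p}\big(\sum_{i\neq j}|G_{ij}|^2\big)^p$, with equality exactly when all $|G_{ij}|$ coincide, that is, when $\Phi$ is equiangular in the sense of Definition~\ref{def:equiangular}. Since the inner sum equals $\|G\|_F^2\ge n(n-1)$ by part~(a), raising to the $p$-th power gives $\mathcal{SF}_p(\Phi)\ge \big(n(n-1)\big)^{1-p}\big(n(n-1)\big)^{p}=n(n-1)$, and equality in this last step is exactly tightness. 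Hence equality holds throughout if and only if $\Phi$ is both equiangular and tight, i.e.\ an ETF.

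For $p=\infty$ I would bound the largest off-diagonal modulus below by the average: $\big(\max_{i\neq j}|G_{ij}|\big)^2\ge \tfrac{1}{n(n-1)}\sum_{i\neq j}|G_{ij}|^2=\|G\|_F^2/\big(n(n-1)\big)\ge 1$ by part~(a), so $\mathcal{SF}_\infty(\Phi)\ge 1$. Equality again requires two conditions simultaneously---every off-diagonal modulus equal to the maximum (equiangularity) and $\|G\|_F^2$ minimal (tightness)---so it characterizes ETFs. The recurring obstacle across all three cases is exactly this equality bookkeeping: each bound is a chain of two inequalities, and I must check that their joint saturation reproduces tightness via Proposition~\ref{prop:tight} together with equiangularity, and not some weaker combination.
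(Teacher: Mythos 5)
Your proposal is correct and takes essentially the same route as the paper: pass to the Gram matrix $G=\Phi^\dagger\Phi$, apply Cauchy--Schwarz to its singular values for (a), then a power-mean inequality over the $n(n-1)$ off-diagonal entries for (b) (your Jensen step is exactly the paper's H\"older step, since $(n(n-1))^{1-p}=(n(n-1))^{-p/q}$), with the same two-step equality bookkeeping: exactly $d$ equal nonzero singular values gives tightness via Proposition \ref{prop:tight}(v), and constancy of the $|G_{ij}|$ gives equiangularity. One point to flag: in (a) you obtain the lower bound $n(n-1)$ rather than the stated $\sqrt{n(n-1)/d}$; this is not a gap on your side, since under the normalization $\|\Phi^\dagger\Phi\|_*=\sqrt{dn(n-1)}$ one indeed has $\|\Phi^\dagger\Phi\|_*^2/d=n(n-1)$, and the paper's own proof reaches this same quantity before miswriting it as $\sqrt{n(n-1)/d}$ --- an inconsistency in the paper's statement of (a), corroborated by part (b), whose bound $n(n-1)$ for $1<p<\infty$ agrees with your value.
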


\begin{proof}
    Let $\sigma_1\geq\cdots\geq\sigma_n\geq 0$ be the singular values of $\Phi^\dagger\Phi$, of which at most $d$ are nonzero.
    We first prove $(a)$.
    An application of Cauchy-Schwartz yields
    \[
    \displaystyle\sum_{i,j=1}^n\big|[\varphi_i,\varphi_j]\big|^{2}= \sum_{i=1}^d \sigma_i^2 \geq \frac{1}{d}\left(\sum_{i=1}^d \sigma_i\right)^2 = \frac{\|\Phi^\dagger\Phi\|_*^2}{d} = \sqrt{\frac{n(n-1)}{d}}.
    \]
    Equality holds if and only $\Phi^\dagger\Phi$ has exactly $d$ nonzero singular values, all of which are equal.
    By Proposition \ref{prop:tight}, this is equivalent to $\Phi$ being a tight frame.
    
    Let $p\in (1,\infty)$ and let $q$ be the H\"{o}lder conjugate of $p$, that is $\frac{1}{p}+\frac{1}{q}=1$. 
    Observe
    \begin{align*}
        \displaystyle\sum_{i,j=1}^n\big|[\varphi_i,\varphi_j]\big|^{2p} =\sum_{i\neq j}\big|[\varphi_i,\varphi_j]\big|^{2p} 
        &\geq\left(\frac{1}{n(n-1)}\right)^{p/q}\left(\sum_{i\neq j}\big|[\varphi_i,\varphi_j]\big|^2\right)^p \\
        &=\left(\frac{1}{n(n-1)}\right)^{p/q}\left(\sum_{i=1}^d \sigma_i^2\right)^p \\
        &\geq \left(\frac{1}{n(n-1)}\right)^{p/q}\left(\frac{1}{d}\left(\sum_{i=1}^d \sigma_i\right)^2\right)^p \\
        &= \left(\frac{1}{d^qn(n-1)}\right)^{p/q}\|\Phi^\dagger\Phi\|_*^{2p} \\
        &= \left(\frac{\big(dn(n-1)\big)^q}{d^qn(n-1)}\right)^{p/q} \\
        &= n(n-1).
    \end{align*}
    The first inequality is H\"{o}lder's inequality, so equality holds if and only if $\big|[\varphi_i,\varphi_j]\big|^2$ is constant for all $i\neq j$, i.e.\ when $\Phi$ is equiangular.
    Equality holds in the second inequality if and only if $\Phi^\dagger\Phi$ has exactly $d$ nonzero singular values, all of which are equal.
    By Proposition \ref{prop:tight}, this occurs if and only if $\Phi$ is a tight frame.
    
    When $p = \infty$, we have
    \[
    \max_{i\neq j}|[\varphi_i,\varphi_j]|^2 \geq \frac{1}{n(n-1)}\sum_{i\neq j}|[\varphi_i,\varphi_j]|^2 
    \geq \frac{1}{dn(n-1)}\left(\sum_{i=1}^d\sigma_i\right)^2 
    =  \frac{\|\Phi^\dagger\Phi\|_*^{2}}{dn(n-1)} = 1.
    \]
    Once again, equality occurs throughout if and only if $|[\varphi_i,\varphi_j]|^2$ is constant for all $i\neq j$ and $\Phi^\dagger\Phi$ has exactly $d$ nonzero singular values, all of which are equal. 
    Thus as above, equality holds throughout if and only if $\Phi$ is an ETF. 
\end{proof}

\begin{remark}
Theorem \ref{thm:optimETF} establishes that ETFs in real symplectic space, when they exist, are solutions to an optimization problem.
Conceivably, one could leverage Theorem \ref{thm:equiv} to find skew Hadamard matrices by using an optimization routine to search for ETFs in real symplectic space. 
Such a task is beyond the scope of this paper and so we leave this as an open problem.
\end{remark} 

We now give some basic examples of ETFs in $\R_\mathcal{S}^d$.

\begin{example}\label{ex:basicETF}
    The $2\times 2$ frame $\Phi = I_2$ is a $1$-tight ETF in $\R_\mathcal{S}^2$, since 
    \[\Phi^\dagger\Phi = \Omega = \left[\begin{array}{rr} 0 & 1\\ -1 & 0\end{array}\right] \qquad \text{ and } \qquad (\Phi^\dagger\Phi)^3 = \Omega^3 = -\Omega = -\Phi^\dagger\Phi.\]
\end{example}

\begin{example}\label{ex:conf}
    The matrix 
    \[
    C = \left[\begin{array}{rrrr} 0 & 1 & 1 & 1 \\
    -1 & 0 & -1 & 1 \\
    -1 & 1 & 0 & -1 \\
    -1 & -1 & 1 & 0 \end{array}\right]
    \]
    is the Gram matrix of a $4\times 4$ ETF in real symplectic space.
    We have $\operatorname{rank}(C) = 4$ and so applying Theorem $\ref{thm:factoring}$ gives a $4\times 4$ frame in $\R_{\mathcal{S}}^4$. 
    Meanwhile the underlying frame is clearly equiangular while one can check that $C^3 = -3C$, that is, the frame is $\sqrt{3}$-tight.

    The principal submatrix 
    \[
    K = \left[\begin{array}{rrr} 0 & -1 & 1 \\
    1 & 0 & -1 \\
    -1 & 1 & 0 \end{array}\right]
    \]
    of $C$ is the Gram matrix of a $2\times 3$ ETF in real symplectic space, as we can similarly check that $\operatorname{rank}(K) = 2$ and $K^3 = -3K$.
\end{example}

The matrix $C$ in the previous example is a \emph{skew conference matrix}, as is the Gram matrix in Example \ref{ex:basicETF}. 
A \textbf{skew conference matrix} $C$ of order $n$ is an $n\times n$ matrix with off-diagonal entries $\pm 1$ satisfying $C = -C^\top$ and $CC^\top = (n-1)I_n$.
For any skew symmetric matrix $A$ one has 
\[
AA^\top = (n-1)I_n \iff (A+I)(A+I)^\top = nI_n.
\]
In particular, this means that $C$ is a skew conference matrix if and only if $C+I$ is a skew Hadamard matrix.
Thus $n\times n$ skew Hadamard matrices exist if and only if $n\times n$ skew conference matrices exist.  

One can normalize any skew conference matrix $C$ of order at least $2$ to produce a unique skew conference matrix $C'$ with the form: 
\[
C' = \left[\begin{array}{rr} 0 & \mathbf{1}^\top \\ -\mathbf{1} & K \end{array}\right],
\]
where $\mathbf{1}$ denotes the all-ones vector whose dimension is clear from context.
This normalization is done by conjugating $C$ with a $\pm 1$-diagonal matrix.
We call the $(n-1)\times (n-1)$ matrix $K$ the \textbf{core} of $C$. 
In Example \ref{ex:conf} above, $C$ is a skew conference matrix that has already been normalized, and $K$ is its core.
Actually, skew conference matrices of order at least 2 always give rise to ETFs in real symplectic space.
We explain in the following theorem, which proves three of the four implications in Theorem \ref{thm:equiv}. 

\begin{theorem}\label{thm:confgivesetf}
    Choose any $d>1$.
    \begin{enumerate}
        \item[(a)] There exists a $d\times d$ ETF in real symplectic space if and only if there exists a skew Hadamard matrix of order $d$. 
        Specifically, $G\in \R^{d\times d}$ is the Gram matrix of such an ETF if and only if $I_d+\frac{1}{|G_{12}|}G$ is a skew Hadamard matrix.
        \item[(b)] There exists a $d\times (d+1)$ ETF in real symplectic space if there exists a skew Hadamard matrix of order $d+2$. Specifically, if $H\in \R^{(d+2)\times (d+2)}$ is a skew Hadamard matrix, then $H-I_{d+2}$ is a skew conference matrix, and its core is the Gram matrix of a $d\times (d+1)$ ETF in real symplectic space.
    \end{enumerate}
\end{theorem}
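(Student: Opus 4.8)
The plan is to reduce both equivalences to the algebraic tightness criterion of Proposition~\ref{prop:tight}(iv) together with the factorization result Theorem~\ref{thm:factoring}. Throughout I will use that a matrix is the Gram matrix of a frame in $\R_\mathcal{S}^d$ exactly when it is skew symmetric of rank $d$, that equiangularity with constant $\mu$ means every off-diagonal entry has modulus $\mu$, and that rescaling the frame (which preserves equiangularity) lets me normalize to $\mu = 1$, so that the off-diagonal entries become $\pm 1$.

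For part~(a), the governing observation is that for an \emph{invertible} skew symmetric matrix $G$ the tightness identity $G^3 = -c^2 G$ is equivalent to $G^2 = -c^2 I$, hence to $GG^\top = -G^2 = c^2 I$. Given a $d\times d$ ETF with Gram matrix $G$, I set $C = \frac{1}{\mu}G$, which is skew symmetric with $\pm 1$ off-diagonal entries; reading off the diagonal of $CC^\top$ gives $(CC^\top)_{ii} = d-1$, which forces $c^2/\mu^2 = d-1$ and therefore $CC^\top = (d-1)I$. Thus $C$ is a skew conference matrix and $C + I = I_d + \frac{1}{|G_{12}|}G$ is a skew Hadamard matrix. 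Conversely, from a skew Hadamard matrix of order $d$ I obtain its skew conference matrix $C$, which satisfies $C^2 = -(d-1)I$; this makes $C$ invertible of rank $d$, so Theorem~\ref{thm:factoring} realizes it as a Gram matrix, while $C^3 = -(d-1)C$ and Proposition~\ref{prop:tight}(iv) give $\sqrt{d-1}$-tightness and the $\pm 1$ entries give equiangularity.

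For part~(b), I start from a skew Hadamard matrix $H$ of order $d+2$, form the skew conference matrix $C = H - I_{d+2}$, and normalize it to the block form $C' = \left[\begin{array}{cc} 0 & \mathbf{1}^\top \\ -\mathbf{1} & K\end{array}\right]$ with core $K$ of size $(d+1)\times(d+1)$. Expanding $(C')^2$ in blocks and equating with $-(d+1)I_{d+2}$ (which follows from $(C')^2 = -C'(C')^\top = -(d+1)I$) yields two facts: the off-diagonal blocks give $K\mathbf{1} = 0$, so $\mathbf{1}\in\ker K$, and the bottom-right block gives $K^2 = J - (d+1)I_{d+1}$, where $J = \mathbf{1}\mathbf{1}^\top$. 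Since $KJ = (K\mathbf{1})\mathbf{1}^\top = 0$, it follows that $K^3 = K(J - (d+1)I) = -(d+1)K$, the tightness identity with $c = \sqrt{d+1}$; as $K$ is skew symmetric with $\pm 1$ off-diagonal entries, any underlying frame is equiangular.

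The step demanding the most care is verifying that $\operatorname{rank}(K) = d$ exactly, so that Theorem~\ref{thm:factoring} places the frame in $\R_\mathcal{S}^d$ and not in a lower-dimensional symplectic space. I would extract this from the spectrum of $K^2 = J - (d+1)I$: because $J$ has eigenvalue $d+1$ on $\operatorname{span}(\mathbf{1})$ and $0$ on $\mathbf{1}^\perp$, the matrix $K^2$ has eigenvalue $0$ on $\operatorname{span}(\mathbf{1})$ and $-(d+1)$ on the $d$-dimensional space $\mathbf{1}^\perp$, so $\operatorname{rank}(K^2) = d$; together with $\operatorname{rank}(K^2)\le\operatorname{rank}(K)\le d$ (the upper bound coming from $\mathbf{1}\in\ker K$) this pins $\operatorname{rank}(K) = d$. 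A final consistency check notes that $d$ is even, since a skew Hadamard matrix of order $d+2 > 2$ forces $d+2\equiv 0 \bmod 4$, i.e. $d\equiv 2\bmod 4$, as required for $\R_\mathcal{S}^d$.
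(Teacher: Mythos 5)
Your proposal is correct and follows essentially the same route as the paper: part (a) by normalizing the Gram matrix to a skew conference matrix and invoking Theorem~\ref{thm:factoring} with the tightness criterion of Proposition~\ref{prop:tight}(iv), and part (b) by normalizing $H - I_{d+2}$, extracting the core $K$, and deriving $\mathbf{1}\in\ker K$, $K^2 = J_{d+1}-(d+1)I_{d+1}$, and $\operatorname{rank}(K)=d$ from the spectrum. Your block expansion of $(C')^2$ is just the paper's computation with $CC^\top$ in disguise (since $K K^\top = -K^2$), and your treatment is, if anything, cleaner about the rank verification and the sign bookkeeping than the paper's own write-up.
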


\begin{proof}
    First we prove the forward direction of $(a)$. 
    Let $G$ be the Gram matrix of a $d\times d$ ETF in real symplectic space. 
    It follows that $|G_{12}| = |G_{ij}|$ for all $i\neq j$, and so $H:=I_d + \frac{1}{|G_{12}|}G$ has entries $\pm 1$. 
    It remains to verify that $HH^\top = dI_d$.
    We know that $G^3=-c^2G$ for some $c>0$, and that $G$ is invertible since it is the Gram matrix of a $d\times d$ ETF.
    This gives that $GG^\top = -c^2I_d$. 
    Since the diagonal entries of $GG^\top$ are the Euclidean norm squared of the rows of $G$, we see that $-c^2 = |G_{12}|^2(d-1)$ and so
    \[
    HH^\top = I_d + \frac{1}{|G_{12}|^2}GG^{\top} = I_d+(d-1)I_d = dI_d.
    \]
    
    For the reverse direction, let $H=I_d+\frac{1}{|G_{12}|}G$ be a skew Hadamard matrix.
    Thus $C:=\frac{1}{|G_{12}|}G$ is a skew conference matrix, which has off-diagonal entries $\pm 1$, is skew symmetric, and satisfies $CC^\top = (d-1)I_d$.
    It follows that $C^3 = -(d-1)C$ and $\operatorname{rank}(C) = d$.
    Since the off-diagonal entries of $C$ are $\pm 1$, applying Theorem \ref{thm:factoring} yields that $C$, and therefore $G$, is the Gram matrix of a $d\times d$ ETF in real symplectic space. 

    For $(b)$, let $H\in \R^{(d+2)\times(d+2)}$ be a skew Hadamard matrix so that $C =H - I_{d+2}$ is a skew conference matrix, where without loss of generality $C$ is normalized.
    Let $K$ be the core of $C$.
    From $CC^\top = (d-1)I_d$, one sees that $\mathbf{1}\in \ker K$ and $KK^\top = (d-1)I_{d-1}-J_{d-1}$, where $J_{d-1}$ is the $(d-1)\times (d-1)$ all ones matrix.
    Thus $K^3 = -(d-1)K$. 
    Examining the spectrum of $KK^\top$ indicates that $K$ has $d-2$ nonzero singular values, hence $\operatorname{rank}(K) = d-2$. 
    Meanwhile, the off-diagonal entries of $K$ are $\pm 1$.
    An application of Theorem \ref{thm:factoring} yields a $(d-2)\times (d-1)$ ETF for which $K$ is the Gram matrix.
\end{proof}

\subsection{Symplectic Zauner's Conjecture}\,

Recall that in the complex or real Euclidean setting, Gerzon's bound provides an upper bound for the number of vectors in an ETF. 
So far in the symplectic setting, we have only seen examples of $d\times d$ ETFs (skew conference matrices) and $d\times (d+1)$ ETFs (their cores).
It is natural to ask whether $d\times n$ ETFs in real symplectic space exist for $n>d+1$. 
To that end, we provide a symplectic version of Gerzon's bound.
The naming of this result is intentional and appropriate: the original Gerzon's bound is proved by an argument relying on linear independence, and the proof below relies on linear independence as well.

\begin{proposition}[Symplectic Gerzon's Bound]\label{prop:gerzon} 
    Let $\Phi$ be an equiangular sequence of $n$ vectors in $\R_\mathcal{S}^d$. Then $n\leq d+1$.
\end{proposition}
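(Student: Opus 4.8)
The plan is to mimic the classical Gerzon's bound argument, which bounds the number of equiangular lines by the dimension of the space of symmetric matrices. Here the relevant ambient space is the space of real skew-symmetric $n \times n$ matrices, and the key is that equiangularity plus the skew structure of the Gram matrix forces a collection of rank-$1$-type outer products to be linearly independent inside this space.

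The first step is to normalize the sequence so that the off-diagonal entries of the Gram matrix $G = \Phi^\dagger\Phi$ all have modulus $\mu = 1$; Definition \ref{def:equiangular} permits uniform rescaling, so I may assume $|G_{ij}| = 1$ for all $i \neq j$, while $G_{ii} = 0$ since $[\varphi_i,\varphi_i]=0$. Thus each entry of $G$ lies in $\{0, \pm 1\}$, with zeros exactly on the diagonal. The natural candidates for linearly independent objects are the matrices $P_i := e_i^{} e_i^\top - G_{\cdot i}^{} G_{\cdot i}^\top$ or, more in the spirit of the real case, the skew-symmetric "outer products" built from the columns of $G$ together with the standard basis. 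Concretely, I would consider for each $i$ the skew-symmetric matrix $M_i := e_i g_i^\top - g_i e_i^\top$, where $g_i$ is the $i$th column of $G$; each $M_i$ lives in the space $\mathrm{Skew}_n(\R)$ of real skew-symmetric $n\times n$ matrices, which has dimension $\binom{n}{2}$.

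The heart of the argument is to extract $n$ linearly independent skew-symmetric matrices from the equiangularity data and show they sit inside a space of dimension one less than expected, yielding $n \le d+1$ rather than $n \le \binom{n}{2}$. The correct reduction is to pass to the rank: since $\Phi$ is equiangular but need not be a frame, I would separately handle the rank of $G$. If $\operatorname{rank}(G) = r$, then $G$ factors through an $r$-dimensional symplectic space, and the outer-product matrices $\varphi_i \varphi_i^\top$ (now genuine rank-$1$ symmetric matrices in the $d\times d$ picture, via $\Phi = DU$ from Theorem \ref{thm:factoring}) satisfy a Gram-type relation: the Frobenius inner product $\langle \varphi_i\varphi_i^\top, \varphi_j\varphi_j^\top\rangle$ equals $[\varphi_i,\varphi_j]^2 = \mu^2 = 1$ for $i\neq j$ after normalizing the rows, which forces their Gram matrix to be $J + (\text{diagonal})$, hence of rank at most $2$ deficient from full — the standard computation showing these matrices are linearly independent and span a subspace of $\mathrm{Sym}_d(\R)$ whose dimension is constrained.

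\textbf{The main obstacle} I anticipate is that the symplectic form is alternating rather than symmetric, so the off-diagonal entries $[\varphi_i,\varphi_j]$ are antisymmetric in $i,j$ and the diagonal entries vanish identically; this breaks the naive adaptation of the real Gerzon argument, which crucially uses the nonzero diagonal (the unit norms) to certify linear independence. The resolution must exploit that equiangularity gives $|G_{ij}|=1$ off-diagonal while $G_{ii}=0$, so $G \circ G$ (entrywise square) equals $J - I$, the adjacency-like matrix of the complete graph. I would then argue that $GG^\top = -G^2$ is determined up to the combinatorial sign pattern, and that linear independence of the $n$ outer products forces $n \le \dim(\text{ambient}) = d+1$; the sharp constant $d+1$ (rather than the generic $\binom{d+1}{2}$) should emerge because equiangularity pins the Gram matrix of the outer products to have a very restricted spectrum, collapsing the available dimensions. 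Verifying precisely that this collapse lands on $d+1$ is the step requiring the most care.
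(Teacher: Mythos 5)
There is a genuine gap, and it lies exactly where you flag it yourself: the ``collapse'' from the generic Gerzon-type bound down to $d+1$ is never established, and the outer-product strategy cannot produce it. Two concrete problems. First, your pairing computation is wrong: the Frobenius inner product $\langle \varphi_i\varphi_i^\top,\varphi_j\varphi_j^\top\rangle$ equals $(\varphi_i^\top\varphi_j)^2$, the \emph{Euclidean} pairing, about which equiangularity in $\R_\mathcal{S}^d$ says nothing; to see $[\varphi_i,\varphi_j]^2$ you would need the twisted pairing $\operatorname{tr}\bigl(X\Omega Y\Omega^\top\bigr)$. Second, even after that repair, the Gram matrix of the outer products $\varphi_i\varphi_i^\top$ under the twisted pairing is $J_n-I_n$ (zero diagonal since $[\varphi_i,\varphi_i]=0$, ones off the diagonal), which is nonsingular -- but this only proves the $n$ outer products are linearly independent in $\mathrm{Sym}_d(\R)$, giving $n\leq \tfrac{1}{2}d(d+1)$. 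No $(d+1)$-dimensional ambient space is ever identified, and none exists along this route: dimension counting with rank-one symmetric matrices is structurally a quadratic-in-$d$ argument, while the claimed bound is linear in $d$.

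The paper's proof uses a completely different and much sharper mechanism: it proves the \emph{vectors themselves} are linearly independent, not their outer products. After normalizing so $|G_{ij}|=1$ off the diagonal, $G=\Phi^\dagger\Phi$ is an integer skew-symmetric matrix with zero diagonal and $\pm1$ entries elsewhere, so reducing the entries mod $2$ gives $G\equiv J_n-I_n \pmod 2$. When $n$ is even, $\det(J_n-I_n)=(-1)^{n-1}(n-1)$ is odd, hence $\det G$ is odd and in particular nonzero; then $\ker\Phi\subseteq\ker G=\{0\}$ forces the $n$ columns of $\Phi$ to be linearly independent in a $d$-dimensional space, so $n\leq d$. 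When $n$ is odd (where $G$ is automatically singular, being odd-order skew-symmetric), one deletes a single vector and applies the even case to get $n-1\leq d$. This parity argument on the determinant is the missing idea; your proposal never engages with the integrality/sign structure of $G$ that makes the linear bound possible.
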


\begin{proof}
    Without loss, we may assume that $|\Phi^\dagger\Phi_{ij}|=1$ for all $i\neq j$.
    
    First, suppose $n$ is even.
    The $n\times n$ Gram matrix $\Phi^\dagger\Phi$ has zeros along the main diagonal and $\pm 1$ elsewhere. 
    Denote by $\pi(\Phi^\dagger\Phi)$ the application of natural projection $\pi: \Z \to \Z/2\Z$ to the entries of $\Phi^\dagger\Phi$. 
    This projection is a ring homomorphism and $\det(\Phi^\dagger\Phi)$ is a polynomial in the entries of $\Phi^\dagger\Phi$, so $\det(\pi(\Phi^\dagger\Phi)) = \pi(\det(\Phi^\dagger\Phi))$. 
    Note also that $\pi(\Phi^\dagger\Phi) = \pi(J_n-I_n)$. Since $\det(J_n-I_n) = (-1)^{n-1}(n-1)$, and since $n$ is even,
    \[ 
    1 = \pi(\det(J_n-I_n))=\det(\pi(J_n-I_n)) = \det(\pi(\Phi^\dagger\Phi)) = \pi(\det(\Phi^\dagger\Phi)).
    \] 
    Therefore $\Phi^\dagger\Phi$ has odd, necessarily nonzero, determinant. Thus the columns of $\Phi$ are linearly independent, as $\ker \Phi\subseteq\ker\Phi^\dagger\Phi = \{0\}$. So $n\leq d$.

    If $n$ is odd, remove one vector from $\Phi$ and apply the even case to see $n-1 \leq d$.
\end{proof}

The following result identifies the frame bounds of an ETF in $\R_\mathcal{S}^d$. 

\begin{proposition}\label{prop:cequals}
    Let $\Phi$ be a $c$-tight $d\times n$ ETF in $\R_\mathcal{S}^d$ with $|\Phi^\dagger\Phi_{ij}| = \mu$ for $i \neq j$. Then 
    \[
    c = \begin{cases}
        \mu\sqrt{n-1} & \text{if }n=d, \\
        \mu\sqrt{n} & \text{if }n = d +1.
    \end{cases}
    \]
\end{proposition}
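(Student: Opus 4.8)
The plan is to extract the single scalar relation $dc^2 = n(n-1)\mu^2$ and then substitute the two allowed values of $n$. The cleanest route is a double count of $\operatorname{tr}(GG^\top)$, where $G := \Phi^\dagger\Phi$ denotes the Gram matrix.

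First I would record the structure of $G$. Since $\Phi$ is equiangular with constant $\mu$ and $G$ is skew symmetric, $G$ has zeros on its main diagonal and every off-diagonal entry has modulus $\mu$. Computing the diagonal of $GG^\top$ directly, the $i$th diagonal entry is
\[
(GG^\top)_{ii} = \sum_{j=1}^n G_{ij}(G^\top)_{ji} = \sum_{j=1}^n G_{ij}^2 = \sum_{j\neq i}\mu^2 = (n-1)\mu^2,
\]
using $G_{ii}=0$. Summing over $i$ gives $\operatorname{tr}(GG^\top) = n(n-1)\mu^2$.

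Next I would compute the same trace spectrally. Recall that $\operatorname{tr}(GG^\top)$ equals the sum of the squares of the singular values of $G$. Because $\Phi$ is a $c$-tight frame, Proposition \ref{prop:tight}(v) tells us that $G=\Phi^\dagger\Phi$ has exactly $d$ nonzero singular values, each equal to $c$. Hence $\operatorname{tr}(GG^\top) = d c^2$. Equating the two expressions yields $d c^2 = n(n-1)\mu^2$, so that
\[
c = \mu\sqrt{\frac{n(n-1)}{d}}.
\]
Finally I would substitute: when $n=d$ this reads $c=\mu\sqrt{(d-1)} = \mu\sqrt{n-1}$, and when $n=d+1$ it reads $c=\mu\sqrt{d+1}=\mu\sqrt{n}$, as claimed.

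I do not expect a genuine obstacle here: the argument is a straightforward Frobenius-norm (or equivalently trace) bookkeeping computation. The only point requiring care is the justification that $G$ has precisely $d$ nonzero singular values all equal to $c$, which is exactly the content of the tightness characterization in Proposition \ref{prop:tight}; invoking it correctly is what converts the spectral side of the count into the clean expression $dc^2$.
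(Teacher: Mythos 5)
Your proof is correct and takes essentially the same approach as the paper: both arguments double-count $\|\Phi^\dagger\Phi\|_F^2$, computing it entrywise as $n(n-1)\mu^2$ and spectrally as $dc^2$. The only cosmetic difference is that you invoke the singular-value characterization (Proposition \ref{prop:tight}(v)) directly, whereas the paper uses $(\Phi\Phi^\dagger)^2=-c^2I_d$ from item (iii) together with cyclicity of the trace to reach the same identity.
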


\begin{proof}
    Note that Proposition \ref{prop:gerzon} gives $n = d$ or $n=d+1$. 
    By Proposition \ref{prop:tight}, $(\Phi\Phi^\dagger) = -c^2I_d$. 
    Thus  
    \[
    c^2d = -\operatorname{tr}\left((\Phi\Phi^\dagger)^2\right) = -\operatorname{tr}\left((\Phi^\dagger\Phi)^2\right) =  \operatorname{tr}\big((\Phi^\dagger\Phi)(\Phi^\dagger\Phi)^\top\big) = \|\Phi^\dagger\Phi\|_F^2= \mu^2(n-1)n.
    \]
    Dividing by $d>0$ gives the desired result.
\end{proof}

To this point, we have proved three of the four implications in Theorem \ref{thm:equiv}. 
What remains is the forward direction of $(b)$.
As one might expect, this direction is not as obvious.

In the proof of this remaining implication, we will show that the Gram matrix of a (scaled) $d\times (d+1)$ ETF in $\R_{\mathcal{S}}^d$ is the core of a skew conference matrix. 
To do this, we will need to show that the kernel of said Gram matrix is spanned by a flat vector, that is, a vector whose entries are of equal modulus.
This fact is not immediately clear and requires one to uncover and study the combinatorial object that underlies ETFs in real symplectic space. 
These objects belong to a certain class of directed graphs.

Consider the Gram matrix $K$ in Example \ref{ex:conf}.
This matrix determines a directed graph.
Specifically, one can assign 3 vertices $v_1,v_2$, and $v_3$ to the rows and columns of $K$.
Then whenever $[K]_{ij}=1$, draw an edge from $v_i$ to $v_j$.
The matrix $K$ and the resulting graph are shown in Figure \ref{fig:graphandgram}.

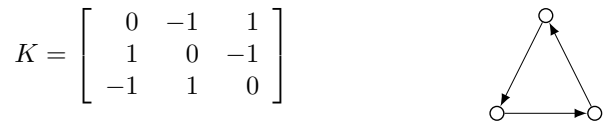
\begin{figure}[h!]
    \begin{multicols}{2}
    \qquad\qquad\qquad
    $K = \left[\begin{array}{rrr} 0 & -1 & 1 \\
    1 & 0 & -1 \\
    -1 & 1 & 0 \end{array}\right] $ \columnbreak
    
    \scalebox{1.3}{
    \begin{tikzpicture}
        \node[shape=circle,draw=black,inner sep=0pt,minimum size=4pt] (2) at (0,1/2) {};
        \node[shape=circle,draw=black,inner sep=0pt,minimum size=4pt] (3) at (-1/2,-1/2) {};
        \node[shape=circle,draw=black,inner sep=0pt,minimum size=4pt] (4) at (1/2,-1/2) {};

       %\path [<-, line width = 0.1mm, >=latex] (1) edge node[left] {} (2);
       %\path [<-, line width = 0.1mm, >=latex] (1) edge node[left] {} (3);
       %\path [<-, line width = 0.1mm, >=latex] (1) edge node[left] {} (4);
        \path [->, line width = 0.1mm, >=latex] (2) edge node[left] {} (3);
        \path [<-, line width = 0.1mm, >=latex] (2) edge node[left] {} (4);
        \path [->, line width = 0.1mm, >=latex] (3) edge node[left] {} (4);
    \end{tikzpicture}}\qquad\qquad
    \end{multicols}
    \caption{$K$ and its corresponding graph}
    \label{fig:graphandgram}
\end{figure}

The upshot of this relationship is that by leveraging results based on the structure of its corresponding graph, we can, given a $d\times (d+1)$ ETF in $\R_\mathcal{S}^d$, determine whether or not the kernel of its Gram matrix is spanned by a flat vector. 
We detail these results in the next subsection.

\subsection{Background on Tournaments and Diamonds}\,

The following subsection provides sufficient background on counting subgraphs in tournaments as found in~\cite{belkouche2020matricial}.
A more detailed explanation, including proofs of the results listed here, can be found there.

\begin{definition}
    An \textbf{n-tournament} $T$ is a directed graph on $n$ vertices with one edge between every pair of vertices. 
\end{definition}

In a tournament, a vertex is $i$ said to \textbf{dominate} (resp.\ \textbf{be dominated by}) a vertex $j$ if there exists an edge from $i$ to $j$ (resp.\ $j$ to $i$). 
The set of vertices dominated by (resp.\ dominating) vertex $i$ is denoted $N_T^+(i)$ (resp.\ $N_T^-(i)$). 
Also, set
\[ 
d_T^+(i) = |N_T^+(i)|, \qquad d_T^-(i) = |N_T^-(i)|,
\]
\[
d_T^+(i,j) = |N_T^+(i)\cap N_T^+(j)|, \qquad d_T^-(i,j) = |N_T^-(i)\cap N_T^-(j)|.
\]

A tournament is completely determined by its \emph{Seidel adjacency matrix}.
The \textbf{Seidel adjacency matrix} $S\in\R^{X\times X}$ of a tournament $T$ on vertex set $X$ is defined by
\[
S_{ij} = \begin{cases}
    1 & \text{if $i$ dominates $j$}, \\
    -1 & \text{if $i$ is dominated by $j$}, \\
    0 & \text{otherwise}.
\end{cases}
\]

An important fact regarding Seidel adjacency matrices is that their spectrum remains invariant under \textbf{switching}, or conjugating by a diagonal $\pm1$ matrix.
Two tournaments $T$ and $T'$ on the same vertex set are \textbf{switching equivalent} if there exists a diagonal $\pm1$ matrix $D$ such that $DSD = S'$, where $S$ and $S'$ are the corresponding Seidel adjacency matrices of $T$ and $T'$.

Up to isomorphism, there are four $4$-tournaments: two whose Seidel adjacency matrices have determinant $1$ and two that have determinant $9$.
The $4$-tournaments with determinant $9$ are called \textbf{diamonds} and are given by a single vertex dominating, or being dominated by, a $3$-cycle.
The number of diamonds in a tournament $T$ is denoted by $\delta_T$.
Figure \ref{fig:fourfours} shows the four different $4$-tournaments up to isomorphism, with the diamonds being the two tournaments on the left.

\begin{figure}[h!]
\begin{center}
    \scalebox{1.2}{
    \begin{tikzpicture}
        \node[shape=circle,draw=black,inner sep=0pt,minimum size=4pt] (1) at (0,-.16) {};
        \node[shape=circle,draw=black,inner sep=0pt,minimum size=4pt] (2) at (0,1/2) {};
        \node[shape=circle,draw=black,inner sep=0pt,minimum size=4pt] (3) at (-1/2,-1/2) {};
        \node[shape=circle,draw=black,inner sep=0pt,minimum size=4pt] (4) at (1/2,-1/2) {};

       \path [<-, line width = 0.1mm, >=latex] (1) edge node[left] {} (2);
       \path [<-, line width = 0.1mm, >=latex] (1) edge node[left] {} (3);
       \path [<-, line width = 0.1mm, >=latex] (1) edge node[left] {} (4);
        \path [->, line width = 0.1mm, >=latex] (2) edge node[left] {} (3);
        \path [<-, line width = 0.1mm, >=latex] (2) edge node[left] {} (4);
        \path [->, line width = 0.1mm, >=latex] (3) edge node[left] {} (4);
    \end{tikzpicture}}\qquad
    \scalebox{1.2}{\begin{tikzpicture}
        \node[shape=circle,draw=black,inner sep=0pt,minimum size=4pt] (1) at (0,-.16) {};
        \node[shape=circle,draw=black,inner sep=0pt,minimum size=4pt] (2) at (0,1/2) {};
        \node[shape=circle,draw=black,inner sep=0pt,minimum size=4pt] (3) at (-1/2,-1/2) {};
        \node[shape=circle,draw=black,inner sep=0pt,minimum size=4pt] (4) at (1/2,-1/2) {};

        \path [->, line width = 0.1mm, >=latex] (1) edge node[left] {} (2);
        \path [->, line width = 0.1mm, >=latex] (1) edge node[left] {} (3);
        \path [->, line width = 0.1mm, >=latex] (1) edge node[left] {} (4);
        \path [->, line width = 0.1mm, >=latex] (2) edge node[left] {} (3);
        \path [<-, line width = 0.1mm, >=latex] (2) edge node[left] {} (4);
        \path [->, line width = 0.1mm, >=latex] (3) edge node[left] {} (4);
    \end{tikzpicture}}\qquad
    \scalebox{1.2}{\begin{tikzpicture}
        \node[shape=circle,draw=black,inner sep=0pt,minimum size=4pt] (1) at (0,-.16) {};
        \node[shape=circle,draw=black,inner sep=0pt,minimum size=4pt] (2) at (0,1/2) {};
        \node[shape=circle,draw=black,inner sep=0pt,minimum size=4pt] (3) at (-1/2,-1/2) {};
        \node[shape=circle,draw=black,inner sep=0pt,minimum size=4pt] (4) at (1/2,-1/2) {};

        \path [<-, line width = 0.1mm, >=latex] (1) edge node[left] {} (2);
        \path [<-, line width = 0.1mm, >=latex] (1) edge node[left] {} (3);
        \path [<-, line width = 0.1mm, >=latex] (1) edge node[left] {} (4);
        \path [->, line width = 0.1mm, >=latex] (2) edge node[left] {} (3);
        \path [->, line width = 0.1mm, >=latex] (2) edge node[left] {} (4);
        \path [->, line width = 0.1mm, >=latex] (3) edge node[left] {} (4);
    \end{tikzpicture}}
    \qquad\scalebox{1.2}{\begin{tikzpicture}
        \node[shape=circle,draw=black,inner sep=0pt,minimum size=4pt] (1) at (0,-.16) {};
        \node[shape=circle,draw=black,inner sep=0pt,minimum size=4pt] (2) at (0,1/2) {};
        \node[shape=circle,draw=black,inner sep=0pt,minimum size=4pt] (3) at (-1/2,-1/2) {};
        \node[shape=circle,draw=black,inner sep=0pt,minimum size=4pt] (4) at (1/2,-1/2) {};

        \path [<-, line width = 0.1mm, >=latex] (3) edge node[left] {} (2);
        \path [->, line width = 0.1mm, >=latex] (3) edge node[left] {} (1);
        \path [->, line width = 0.1mm, >=latex] (3) edge node[left] {} (4);
        \path [<-, line width = 0.1mm, >=latex] (2) edge node[left] {} (1);
        \path [<-, line width = 0.1mm, >=latex] (2) edge node[left] {} (4);
        \path [->, line width = 0.1mm, >=latex] (1) edge node[left] {} (4);
    \end{tikzpicture}}
    \end{center}
    \caption{The four $4$-tournaments up to isomorphism}
    \label{fig:fourfours}
\end{figure}
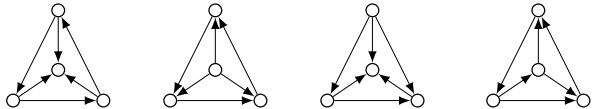

In~\cite{belkouche2020matricial}, Belkouche et al.\ provide upper bounds on $\delta_T$ and in some cases, conditions for saturation.
In particular, when the number of vertices is odd we have the following.

\begin{proposition}[Proposition 9 in~\cite{belkouche2020matricial}]
\label{prop:belkoucheprop}
Let $T$ be an $n$-tournament and $S$ its corresponding Seidel adjacency matrix. 
If $n$ is odd, then
\[
\delta_T \leq \frac{1}{96}n(n-1)(n-3)(n+1).
\]
Moreover, equality holds if and only if $|S^2+nI_n|=J_n$, where $|\cdot|$ is the entry-wise absolute value. 
\end{proposition}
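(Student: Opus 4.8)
The plan is to express the diamond count $\delta_T$ as an explicit polynomial in the entries of the Seidel adjacency matrix $S$, and then extract the bound by optimizing this polynomial subject to the combinatorial constraints on $S$. First I would recall that a diamond on a $4$-subset consists of one vertex joined to a $3$-cycle among the other three, all edges either pointing into or out of that apex. Since the four $4$-tournaments are distinguished by the determinant of their Seidel adjacency matrix ($1$ versus $9$), counting diamonds amounts to counting $4$-subsets whose induced Seidel submatrix has determinant $9$. The key computational bridge is to relate $\delta_T$ to traces of powers of $S$. Because $S$ is skew symmetric with $S_{ii}=0$ and $S_{ij}=\pm1$ off-diagonal, quantities like $\operatorname{tr}(S^2)=-n(n-1)$ and $\operatorname{tr}(S^4)$ are naturally expressible in terms of the codegrees $d_T^{\pm}(i,j)$, and these traces in turn encode the number of each isomorphism type of $4$-subtournament. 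I would set up a linear system: the total number of $4$-subsets is $\binom{n}{4}$, and the spectral moments give two or three further linear relations among the counts of the four tournament types, allowing me to solve for $\delta_T$.

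The main technical step is to compute $\operatorname{tr}(S^4)$ and interpret it combinatorially. Expanding, $\operatorname{tr}(S^4)=\sum_{i,j,k,l} S_{ij}S_{jk}S_{kl}S_{li}$; the diagonal-heavy terms (where indices collide) contribute a term depending only on $n$, while the genuinely four-index terms count closed walks of length $4$ and relate to the entries of $S^2$. Indeed, since $(S^2)_{ij}$ for $i\neq j$ equals $d_T^+(i,j)+d_T^-(i,j)-(\text{something})$, controlling $S^2$ controls the codegree distribution. I expect to arrive at a formula of the shape $\delta_T = \binom{n}{4} - \tfrac{1}{c}\sum_{i<j}\big((S^2)_{ij}\big)^2 + (\text{lower-order in }n)$ for an appropriate constant, so that \emph{maximizing} $\delta_T$ is equivalent to \emph{minimizing} the off-diagonal mass of $S^2$. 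Because the diagonal of $S^2$ is forced to be $-(n-1)$, the shift $S^2+nI_n$ has diagonal entries equal to $1$; the total Frobenius norm $\|S^2+nI_n\|_F^2=\operatorname{tr}(S^4)+2n\operatorname{tr}(S^2)+n^3$ is therefore a rigid quantity, and the off-diagonal entries are minimized in modulus precisely when each equals $\pm1$, i.e.\ when $|S^2+nI_n|=J_n$ entrywise.

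Carrying this out, the arithmetic bound $\delta_T\leq\frac{1}{96}n(n-1)(n-3)(n+1)$ should fall out after substituting the extremal value of $\sum_{i<j}((S^2)_{ij})^2$ into the diamond formula and simplifying; the parity hypothesis that $n$ is odd enters because $S^2+nI_n$ can have all off-diagonal entries of odd modulus (hence equal to $\pm1$ at the optimum) only when $n$ is odd, so this is exactly where the stated equality characterization becomes attainable. The hardest part, and the one I would dwell on, is the bookkeeping that turns $\operatorname{tr}(S^4)$ into the diamond count cleanly: one must correctly account for how each of the four $4$-tournament types contributes to $\operatorname{tr}(S^4)$, which requires knowing the spectrum (equivalently $\operatorname{tr}(S^4)$ restricted to a single induced $4\times 4$ block) of each type. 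Once those four local contributions are tabulated, the global identity follows by summing over all $4$-subsets, and the inequality together with its saturation condition is a direct consequence of the nonnegativity of $\sum_{i<j}\big(|(S^2)_{ij}|-1\big)\cdot(\cdots)$, with equality forcing the entrywise condition $|S^2+nI_n|=J_n$.
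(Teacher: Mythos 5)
Your proposal is correct and takes essentially the same route as the proof in the cited source (note this paper only imports the proposition, though it quotes the key identity at the end of Section~4): from the principal-minor count $8\delta_T+\binom{n}{4}$ and $\operatorname{tr}(S^4)=n(n-1)^2+2\sum_{i<j}\bigl((S^2)_{ij}\bigr)^2$ one obtains $\delta_T=\frac{1}{96}n^2(n-1)(n-2)-\frac{1}{16}\sum_{i<j}\bigl((S^2)_{ij}\bigr)^2$, and for odd $n$ each $(S^2)_{ij}$ with $i\neq j$ is a sum of $n-2$ terms equal to $\pm1$, hence odd, so $\bigl((S^2)_{ij}\bigr)^2\geq 1$ with equality throughout precisely when $|S^2+nI_n|=J_n$, which simplifies to the stated bound $\frac{1}{96}n(n-1)(n-3)(n+1)$. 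Two refinements to your sketch: the constant term is $\frac{1}{96}n^2(n-1)(n-2)$ rather than $\binom{n}{4}$ plus lower-order corrections, and the oddness of $n$ is what makes the \emph{inequality} itself valid (it forces $|(S^2)_{ij}|\geq 1$), not merely what makes the equality case attainable, as your final paragraph suggests.
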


A \textbf{doubly regular tournament} is an $n$-tournament where $d_T^+(i) = d_T^-(i)=\frac{n-1}{2}$ for all vertices $i$ and $d_T^+(i,j) = \frac{n-3}{4}$ for all vertices $i\neq j$. 
By integrality constraints, a doubly regular tournament must have $n \equiv 3$ mod $4$ vertices.

The next theorem, which is essential to the proof of our second main result, shows that when $n\equiv 3$ mod 4, equality in Proposition \ref{prop:belkoucheprop} is the same as $T$ being switching equivalent to a doubly regular tournament.

\begin{proposition}[Theorem 10 in~\cite{belkouche2020matricial}]
\label{thm:belkouchethm}
     Let $T$ be an $n$-tournament and $S$ its Seidel adjacency matrix. 
     If $n\equiv 3$ mod $4$, the following assertions are equivalent:
    \begin{enumerate}
        \item[(i)] $\delta_T = \frac{1}{96}n(n-1)(n-3)(n+1)$.
        \item[(ii)] There exists a $\pm 1$-diagonal matrix such that $DS^2D + nI_n = J_n$.
        \item[(iii)] $T$ is switching equivalent to a doubly regular tournament.
    \end{enumerate}
\end{proposition}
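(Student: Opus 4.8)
The plan is to route everything through the symmetric matrix $M := S^2 + nI_n$. Since $S$ is real and skew symmetric, $S^\top = -S$, so $M$ is symmetric and $S^2 = -S^\top S$ is negative semidefinite; hence $M \preceq nI_n$. Computing diagonal entries, $(S^2)_{ii} = \sum_k S_{ik}S_{ki} = -\sum_k S_{ik}^2 = -(n-1)$, so $M_{ii}=1$ for all $i$ and $\operatorname{tr}(M) = n$. Finally, because $n$ is odd, $S$ is singular, so any $v\in\ker S$ gives $Mv = nv$; thus $n\in\sigma(M)$ and, by $M\preceq nI_n$, it is the largest eigenvalue. With these facts in hand I would prove the cycle $(i)\Rightarrow(ii)\Rightarrow(iii)\Rightarrow(i)$.

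For $(i)\Rightarrow(ii)$: Proposition \ref{prop:belkoucheprop} makes $(i)$ equivalent to $|M| = J_n$, i.e.\ every entry of $M$ is $\pm1$, whence $\operatorname{tr}(M^2) = \sum_{i,j} M_{ij}^2 = n^2$. Listing the eigenvalues as $\mu_1\geq\cdots\geq\mu_n$, I have $\mu_1 = n$, $\sum_i\mu_i = n$, and $\sum_i\mu_i^2 = n^2$; removing the contribution of $\mu_1$ forces $\sum_{i\geq2}\mu_i = 0$ and $\sum_{i\geq2}\mu_i^2 = 0$, so $\mu_i = 0$ for $i\geq 2$. Thus $M$ is a rank-one positive semidefinite matrix with unit diagonal, so $M = dd^\top$ for some $d\in\{\pm1\}^n$. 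Setting $D = \operatorname{diag}(d)$ gives $DMD = (Dd)(Dd)^\top = J_n$, which is exactly $(ii)$.

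For $(ii)\Rightarrow(iii)$: put $S' := DSD$, the Seidel matrix of a tournament $T'$ switching equivalent to $T$; as $D^2 = I_n$, condition $(ii)$ reads $S'^2 = J_n - nI_n$. Writing the $0/1$ adjacency matrix as $A' = \tfrac12(J_n - I_n + S')$, I would compute $A'A'^\top = \tfrac14(J_n - I_n + S')(J_n - I_n - S')$, and using $J_nS' = S'J_n = 0$ (row and column sums of a skew symmetric matrix vanish) together with $S'^2 = J_n - nI_n$, this collapses to $A'A'^\top = \tfrac14\big[(n-3)J_n + (n+1)I_n\big]$. Reading off the diagonal and off-diagonal entries yields $d_{T'}^+(i) = (n-1)/2$ and $d_{T'}^+(i,j) = (n-3)/4$, while $S'S'^\top = nI_n - J_n$ forces $\ker S' = \operatorname{span}\mathbf{1}$ and hence $S'\mathbf{1}=0$ (regularity); so $T'$ is doubly regular, and here $n\equiv 3 \bmod 4$ is used to keep $(n-3)/4\in\Z$. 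For $(iii)\Rightarrow(i)$, the same count run forward shows a doubly regular Seidel matrix satisfies $S'^2 + nI_n = J_n$, so switching back gives $M = DJ_nD$ with all entries $\pm1$; Proposition \ref{prop:belkoucheprop} then returns $(i)$.

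The main obstacle is the step $(i)\Rightarrow(ii)$. Proposition \ref{prop:belkoucheprop} only supplies the entrywise condition $|M| = J_n$, and the substance lies in upgrading this to the global switching identity $DMD = J_n$, equivalently to a consistent sign pattern on $M$. The leverage is the rigidity of the spectrum of $M$: the eigenvalue $n$ forced by $\ker S$, combined with $\operatorname{tr}(M) = n$ and $\operatorname{tr}(M^2) = n^2$, pins $M$ down to rank one, which is precisely the statement that its signs are switchable to all ones. Everything else is routine bookkeeping with tournament degree counts.
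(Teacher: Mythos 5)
Your argument is correct in substance, but note first that the paper does not prove this proposition at all: it is quoted as Theorem 10 of Belkouche et al.~\cite{belkouche2020matricial}, and the paper only borrows that source's technique elsewhere (explicitly, in the proof of Lemma \ref{lem:not1mod4}). Measured against that technique, your proof of the crucial implication $(i)\Rightarrow(ii)$ takes a genuinely different route. The approach reflected in the paper (via Lemma \ref{lem:belkouchelem}) is combinatorial: one uses $(S^2)_{ij}=2\gamma_{ij}-n+2$ and the parities of the out-degrees $d_T^+(i)$ to determine the sign of each off-diagonal entry of $S^2$, and then builds $D=\operatorname{diag}(\varepsilon_1,\dots,\varepsilon_n)$ explicitly with $\varepsilon_i$ dictated by the parity of $d_T^+(i)$. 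Your route is spectral: from $|M|=J_n$ you get $\operatorname{tr}(M)=n$ and $\operatorname{tr}(M^2)=n^2$, while $\ker S\neq\{0\}$ (odd order, skew symmetric) and $M\preceq nI_n$ force $\lambda_{\max}(M)=n$; together these pin $M$ down to the rank-one matrix $dd^\top$ with $d\in\{\pm 1\}^n$, giving $DMD=J_n$. This is shorter, avoids all case analysis, and in fact never uses $n\equiv 3$ mod $4$ (the congruence enters only through the integrality of $d_{T'}^+(i,j)=(n-3)/4$ in $(ii)\Rightarrow(iii)$); what it gives up is the explicit combinatorial description of the switching matrix $D$ in terms of degree parities, which the parity argument provides for free. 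The implications $(ii)\Leftrightarrow(iii)$ are the same routine computation with the $(0,1)$-adjacency matrix in both approaches.

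One local error to repair: in $(ii)\Rightarrow(iii)$ you justify $J_nS'=S'J_n=0$ by the claim that ``row and column sums of a skew symmetric matrix vanish.'' That claim is false in general (the matrix $\Omega$ with $d=2$ has row sums $\pm 1$); vanishing row sums of a Seidel matrix is precisely regularity of the tournament, which is part of what is being proved. The fact you need does hold, but it must be extracted from $(ii)$ itself and placed \emph{before} the expansion of $A'A'^\top$: from $S'^\top S'=-S'^2=nI_n-J_n$ one gets $\ker S'=\ker(S'^\top S')=\ker(nI_n-J_n)=\operatorname{span}\{\mathbf{1}\}$, hence $S'\mathbf{1}=0$ and then $S'J_n=J_nS'=0$. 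You do state this at the end of the same sentence, so the proof is self-repairing by reordering, but as written the justification is circular-looking and the cited general fact is wrong. (In $(iii)\Rightarrow(i)$ the identity $S'J_n=0$ is legitimate as stated, since double regularity includes regularity.)
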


It is not hard to see that an equiangular sequence of $n$ vectors in $\R_\mathcal{S}^d$ can be scaled so that the corresponding Gram matrix is the Seidel adjacency matrix of an $n$-tournament.
Even further, we will soon see that the Gram matrix of a $d\times (d+1)$ ETF in $\R_\mathcal{S}^d$ is the Seidel adjacency matrix of a tournament that is switching equivalent to a doubly regular tournament. 
The significance of this is the following: the Seidel adjacency matrix of a doubly regular tournament has a kernel spanned by the all ones vector.
Therefore, any tournament that is switching equivalent to a doubly regular tournament has a Seidel adjacency matrix whose kernel is spanned by a flat vector.

Lastly, the following computational result assists us in proving Lemma \ref{lem:not1mod4}.

\begin{lemma}[From the proof of Proposition 6 in~\cite{belkouche2020matricial}]
\label{lem:belkouchelem}
Let $T$ be an $n$-tournament with $n>1$ and $S$ its Seidel adjacency matrix.
Define 
\[
\gamma_{ij} := |N_T^+(i)\cap N_T^-(j)|+|N_T^-(i)\cap N_T^+(j)|.
\] 
Then the following identities hold:
\[
\gamma_{ij} = 2n-3 - (d_T^+(i)+d_T^-(i)+2d_T^+(i,j)), \qquad \qquad (S^2)_{ij} = 2\gamma_{ij}-n+2.
\]
\end{lemma}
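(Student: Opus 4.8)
\textbf{Proof proposal for Lemma \ref{lem:belkouchelem}.}

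The plan is to prove both identities by direct double-counting, working entirely at the level of the neighborhood sets $N_T^\pm$ and the Seidel matrix entries. For the first identity, I would fix a pair of distinct vertices $i,j$ and partition the remaining $n-2$ vertices according to how each relates to $i$ and $j$. Every vertex $k \notin \{i,j\}$ falls into exactly one of four classes depending on whether it is dominated by or dominates each of $i$ and $j$: the classes $N_T^+(i)\cap N_T^+(j)$, $N_T^+(i)\cap N_T^-(j)$, $N_T^-(i)\cap N_T^+(j)$, and $N_T^-(i)\cap N_T^-(j)$. Since these four sets partition the $n-2$ vertices outside $\{i,j\}$, their cardinalities sum to $n-2$. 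The key observation is that $\gamma_{ij}$ is the sum of the middle two cardinalities, so I can solve for it once I express the outer two cardinalities in terms of the listed degree quantities.

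The main bookkeeping step is to relate $|N_T^+(i)\cap N_T^+(j)| = d_T^+(i,j)$ and $|N_T^-(i)\cap N_T^-(j)| = d_T^-(i,j)$ to $d_T^+(i)$, $d_T^-(i)$, and $d_T^+(i,j)$. I would use the edge between $i$ and $j$ to account for the $-3$ versus $-2$ discrepancy: counting $d_T^+(i) + d_T^-(i) = n-1$ for any single vertex, and noting that $d_T^-(i,j)$ can be rewritten via the identity $d_T^+(i) + d_T^+(j) + (\text{shared dominated}) + \dots = n-2$ type relations. Concretely, from $|N_T^+(i)\cap N_T^+(j)| + |N_T^+(i)\cap N_T^-(j)| = d_T^+(i)$ (minus a correction if $j \in N_T^+(i)$) and the analogous decomposition for $N_T^-(i)$, summing and substituting $\gamma_{ij} = (n-2) - d_T^+(i,j) - d_T^-(i,j)$ should yield the stated formula $\gamma_{ij} = 2n-3 - (d_T^+(i) + d_T^-(i) + 2d_T^+(i,j))$ after carefully handling whether the edge between $i$ and $j$ points from $i$ to $j$ or vice versa.

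For the second identity, I would compute $(S^2)_{ij}$ directly as $\sum_k S_{ik}S_{kj}$. Since $S_{ii}=0$, the sum runs over $k \notin \{i,j\}$, and each term is $+1$ exactly when $S_{ik}$ and $S_{kj}$ have the same sign and $-1$ when they differ. The same-sign terms correspond to $k$ lying in $N_T^+(i)\cap N_T^+(j)$ or $N_T^-(i)\cap N_T^-(j)$, contributing $+1$ each, while the opposite-sign terms are exactly the $\gamma_{ij}$ vertices contributing $-1$ each. Thus $(S^2)_{ij} = (n-2-\gamma_{ij}) - \gamma_{ij} = n-2-2\gamma_{ij}$; solving for the stated form gives $(S^2)_{ij} = 2\gamma_{ij} - n + 2$ only if I have the sign convention reversed, so I expect the main subtlety to be tracking the sign convention of the Seidel entries (whether $S_{ik}=1$ means $i$ dominates $k$) consistently between the two identities. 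I anticipate the bulk of the difficulty is not conceptual but lies in this sign-and-edge accounting, namely correctly incorporating the contribution of the single edge between $i$ and $j$ so that the additive constants match the claimed expressions exactly.
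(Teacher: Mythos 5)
The paper itself gives no proof of this lemma (it is imported verbatim from \cite{belkouche2020matricial}), so your proposal can only be judged on its own correctness, and it has two genuine problems. The first is the sign bookkeeping in the second identity, which you flag but never resolve, and which resolves opposite to your guess. Under the paper's fixed convention, $k\in N_T^+(j)$ means $j$ dominates $k$, so $S_{jk}=+1$ and hence $S_{kj}=-1$; there is no residual freedom of convention to appeal to. Consequently, for $k\notin\{i,j\}$ the product $S_{ik}S_{kj}$ equals $+1$ exactly when $k\in N_T^+(i)\cap N_T^-(j)$ or $k\in N_T^-(i)\cap N_T^+(j)$, i.e.\ on the $\gamma_{ij}$ vertices, and equals $-1$ on the remaining $n-2-\gamma_{ij}$ vertices, giving $(S^2)_{ij}=\gamma_{ij}-(n-2-\gamma_{ij})=2\gamma_{ij}-n+2$ exactly as stated. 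Your assignment of the four classes is reversed because you conflated $k\in N_T^+(j)$ with $S_{kj}=+1$, and your resulting formula $(S^2)_{ij}=n-2-2\gamma_{ij}$ is simply false: for the transitive $3$-tournament $1\to 2$, $1\to 3$, $2\to 3$ one has $\gamma_{12}=0$ but $(S^2)_{12}=-1$, not $+1$.

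The second problem concerns the first identity: no amount of careful edge accounting can "yield the stated formula," because the formula as printed is false -- it contains a typo. Since every vertex other than $i$ either dominates or is dominated by $i$, one always has $d_T^+(i)+d_T^-(i)=n-1$, so the printed right-hand side collapses to $n-2-2d_T^+(i,j)$; combined with your (correct) relation $\gamma_{ij}=n-2-d_T^+(i,j)-d_T^-(i,j)$ this would force $d_T^+(i,j)=d_T^-(i,j)$ for all $i\neq j$, which fails in the transitive example above, where the printed formula returns $-1$ for a sum of cardinalities. Carrying your own decompositions through correctly -- $d_T^+(i)=d_T^+(i,j)+|N_T^+(i)\cap N_T^-(j)|+\varepsilon$ and $d_T^+(j)=d_T^+(i,j)+|N_T^-(i)\cap N_T^+(j)|+(1-\varepsilon)$, where $\varepsilon=1$ if $i$ dominates $j$ and $0$ otherwise -- gives $\gamma_{ij}=d_T^+(i)+d_T^+(j)-2d_T^+(i,j)-1=2n-3-\bigl(d_T^-(i)+d_T^-(j)+2d_T^+(i,j)\bigr)$, which is what the lemma should say, and is what the proof of Lemma \ref{lem:not1mod4} actually uses: there the parity of $\gamma_{ij}$ must depend on the parities of both $d_T^+(i)$ and $d_T^+(j)$, which the printed formula cannot deliver. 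So your overall strategy (the four-class partition of the $n-2$ remaining vertices plus direct expansion of $\sum_k S_{ik}S_{kj}$) is the right one, but as written the proposal establishes neither identity: one target is misstated and your sketch endorses it, while the other is stated correctly and your computation contradicts it. Testing both identities on a single small example would have exposed both issues.
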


\subsection{Proof of Main Results}

Before we prove the main results, we give the following lemma.
The proof uses the same technique as the one used for part of Proposition \ref{thm:belkouchethm} (Theorem 10 in~\cite{belkouche2020matricial}).

\begin{lemma}
\label{lem:not1mod4}
    Let $T$ be an $n$-tournament with $n>1$ and $S$ its Seidel adjacency matrix.
    If $n\equiv 1$ mod $4$, then
    \[\delta_T < \frac{1}{96}n(n-1)(n-3)(n+1).\]
\end{lemma}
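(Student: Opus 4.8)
The plan is to mirror the structure of the proof of Proposition~\ref{thm:belkouchethm} but to exploit the parity obstruction that arises when $n\equiv 1$ mod $4$. Recall that Proposition~\ref{prop:belkoucheprop} already gives the upper bound $\delta_T \leq \frac{1}{96}n(n-1)(n-3)(n+1)$ for all odd $n$, with equality if and only if $|S^2+nI_n| = J_n$. So it suffices to show that when $n\equiv 1$ mod $4$, the equation $|S^2+nI_n| = J_n$ is \emph{impossible}; the strict inequality then follows immediately. Thus the whole lemma reduces to a feasibility (non-existence) statement about the off-diagonal entries of $S^2$.

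First I would translate $|S^2+nI_n|=J_n$ into a condition on the entries $(S^2)_{ij}$ for $i\neq j$. Since the diagonal of $S^2$ contributes $(S^2)_{ii} = -(n-1)$ (each row of $S$ has $n-1$ nonzero $\pm 1$ entries), the diagonal of $S^2+nI_n$ is $1$, consistent with $J_n$. For the off-diagonal entries, $|S^2+nI_n|=J_n$ forces $(S^2)_{ij}+n \in \{\pm 1\}$, i.e. $(S^2)_{ij} \in \{-n-1,\,-n+1\}$ for every $i\neq j$. Now I would invoke Lemma~\ref{lem:belkouchelem}, which gives $(S^2)_{ij} = 2\gamma_{ij}-n+2$. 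Setting this equal to each allowed value yields $\gamma_{ij} \in \{-\tfrac{3}{2},\, \tfrac{1}{2}\}$ — but $\gamma_{ij}$ is by definition a nonnegative integer, so both candidate values are ruled out on integrality grounds. This is the crux: the parity of $n$ enters precisely through the constant $-n+2$ in the formula for $(S^2)_{ij}$, and when $n$ is odd the two target values $-n\pm 1$ are even, whereas $2\gamma_{ij}-n+2$ is odd, giving a contradiction.

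Let me double-check the parity bookkeeping, since that is where the argument lives. When $n$ is odd, $-n+2$ is odd, and $2\gamma_{ij}$ is even, so $(S^2)_{ij} = 2\gamma_{ij}-n+2$ is always odd. Meanwhile $|S^2+nI_n|=J_n$ demands $(S^2)_{ij} = -n\pm 1$, which is even when $n$ is odd. An odd integer cannot equal an even integer, so the equation $|S^2+nI_n|=J_n$ has no solution for \emph{any} odd $n$ — this is the clean version of the obstruction, and it does not even require the finer hypothesis $n\equiv 1$ mod $4$. I would present the argument at this level of generality and then simply note that $n\equiv 1$ mod $4$ is the case we need for Lemma~\ref{lem:not1mod4}. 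Combined with Proposition~\ref{prop:belkoucheprop}, the failure of the equality condition upgrades the weak inequality to the desired strict one.

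The step I expect to require the most care is the off-diagonal parity computation, specifically confirming the formula $(S^2)_{ij}=2\gamma_{ij}-n+2$ is being applied correctly and that $\gamma_{ij}\in\Z_{\geq 0}$. One subtlety worth a sentence of justification is that $|S^2+nI_n|=J_n$ really does constrain \emph{every} off-diagonal entry (not just forcing the matrix to have all entries of modulus $1$ in some averaged sense), so that the parity contradiction can be derived from a single pair $(i,j)$ with $i\neq j$; this is immediate since $n>1$ guarantees such a pair exists. No genuine obstacle remains beyond this; the argument is essentially a parity count once Lemma~\ref{lem:belkouchelem} and the equality condition of Proposition~\ref{prop:belkoucheprop} are in hand.
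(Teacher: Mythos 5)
Your argument fails at its very first step: the translation of the equality condition $|S^2+nI_n|=J_n$ into entrywise constraints. The matrix $nI_n$ is diagonal, so it contributes nothing off the diagonal; for $i\neq j$ the $(i,j)$ entry of $S^2+nI_n$ is just $(S^2)_{ij}$, and the equality condition forces $(S^2)_{ij}=\pm 1$, not $(S^2)_{ij}\in\{-n-1,-n+1\}$. (The $+n$ matters only on the diagonal, where $(S^2)_{ii}=-(n-1)$ gives $(S^2)_{ii}+n=1$, consistent with $J_n$.) With the correct reading your parity obstruction evaporates: Lemma \ref{lem:belkouchelem} gives $(S^2)_{ij}=2\gamma_{ij}-n+2$, which is odd when $n$ is odd, and the required values $\pm 1$ are also odd, so there is no mod $2$ contradiction at all.

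There is a warning sign you should have caught: your argument never genuinely uses $n\equiv 1$ mod $4$, and you explicitly conclude that equality is impossible for \emph{every} odd $n$. That contradicts Proposition \ref{thm:belkouchethm} of this very paper, which says that for $n\equiv 3$ mod $4$ equality holds exactly when $T$ is switching equivalent to a doubly regular tournament --- and such tournaments exist (e.g.\ the $3$-cycle, or Paley tournaments for primes $n\equiv 3$ mod $4$). Any correct proof must separate the residues $1$ and $3$ mod $4$, and the paper does this by working mod $4$ rather than mod $2$: assuming equality, so $(S^2)_{ij}=\pm 1$, Lemma \ref{lem:belkouchelem} shows the residue of $(S^2)_{ij}$ mod $4$ is governed by whether $d_T^+(i)$ and $d_T^+(j)$ have the same parity, and when $n\equiv 1$ mod $4$ this forces $(S^2)_{ij}=-\varepsilon_i\varepsilon_j$, where $\varepsilon_i=\pm 1$ records the parity of $d_T^+(i)$. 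Conjugating by $D=\operatorname{diag}(\varepsilon_1,\dots,\varepsilon_n)$ then yields $DS^2D=(2-n)I_n-J_n$, which is invertible, so $S$ would have full rank --- impossible for a real skew symmetric matrix of odd order. When $n\equiv 3$ mod $4$ the same computation instead produces $J_n-nI_n$, which is singular, so no contradiction arises; that is precisely where the hypothesis enters and why your mod $2$ argument could never suffice.
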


\begin{proof}
    Suppose for sake of contradiction that equality holds.
    By Proposition \ref{prop:belkoucheprop}, $(S^2)_{ij} = \pm 1$ for all $i\neq j$.
    First, suppose $d_T^+(i) \equiv d_T^+(j)$ mod $2$. 
    Then Lemma \ref{lem:belkouchelem} and a few straightforward calculations imply that $\gamma_{ij}$ is odd, and subsequently $(S^2)_{ij} \equiv 3$ mod $4$. 
    Thus $(S^2)_{ij} = -1$. 
    If instead $d_T^+(i) \equiv d_T^+(j)+1$ mod $2$, we see that $\gamma_{ij}$ is even and $(S^2)_{ij} \equiv 1$ mod $4$, hence $(S^2)_{ij} = 1$.
    
    Construct $D=\text{diag}(\varepsilon_1,\dots,\varepsilon_n)$ where $\varepsilon_i = 1$ if $d_T^+(i)$ is even and $-1$ otherwise. 
    Since $[DS^2D]_{ij} = \varepsilon_i(S^2)_{ij}\varepsilon_j$, we have
    \[ 
    DS^2D = \begin{bmatrix}
        1-n & & -1 \\
        & \ddots & \\
        -1 & & 1-n
    \end{bmatrix} = (2-n)I_n - J_n. 
    \]
    This implies that $\sigma(S^2)=\sigma(DS^2D) = \{2-2n, 2-n\}$. 
    In particular, $S$ has full rank, since $S^2$ does. 
    But $S$ is an odd-sized, real skew symmetric matrix and so must be rank-deficient, a contradiction.
\end{proof}
Now we are ready to prove both of the main results. 
Part of the following proofs uses a technique from~\cite{belkouche2020matricial} involving the characteristic polynomial.

\begin{proof}[Proof of Main Result A (Theorem \ref{thm:mainA})]
    Suppose $\Phi$ is a $d\times n$ ETF in real symplectic space and set $G=\frac{1}{|\Phi^\dagger\Phi_{12}|}\Phi^\dagger\Phi$. 
    We split into two cases. 
    
    For the first case, suppose that $d\equiv 2$ mod 4 and that $n=d$.
    Then $G$ is skew symmetric, has full rank, and off-diagonal entries $\pm 1$.
    By Proposition \ref{prop:cequals}, $G$ satisfies $-GG^\top = G^2 = (d-1)I_d$, implying that $G$ is a skew conference matrix. 
    Since $d\equiv 2$ mod $4$, this is only possible if $d=2$. 
    Therefore for all $d>2$, where $d\equiv 2$ mod 4, $n$ must be $d+1$, while when $d=2$, we may have $n\in \{d,d+1\}$.

    For the second case, suppose that $d\equiv 0$ mod $4$ and that $n=d+1$, so that in particular $n\equiv 1$ mod 4. 
    Consider $G$ as the Seidel adjacency matrix of some $n$-tournament $T$. 
    Recall (see~\cite{horn2012matrix}) that the characteristic polynomial $c_G(t)$ of $G$ can be written as 
    \[
    c_G(t) = t^n + \sum_{j=1}^n E_jt^{n-j} \qquad \text{where} \, E_j = (-1)^j\sum(j\times j\text{ principal minors of } G).
    \]
    Let $\delta_T$ denote the number of diamonds in $T$. 
    The number of $4\times 4$ principal minors of $G$ is ${n\choose 4}$, and each $4\times 4$ principal minor is either $9$ (if the corresponding subgraph is a diamond) or $1$ (otherwise).
    Thus 
    \[
    E_4 = 9\delta_T + 1\left({n\choose 4} - \delta_T\right) = 8\delta_T + {n\choose 4}.
    \]
    Since $\Phi$ is tight, Proposition \ref{prop:cequals} together with the diagonalizability of $G$ imply that the characteristic polynomial of $G$ is given by
    \[
    c_G(t) = t(t^2+n)^m \qquad \text{where } m = \frac{n-1}{2}.
    \]
    Applying the binomial theorem, we have
    \[
    c_G(t) = \sum_{k=0}^m {m\choose k} t^{2m-2k+1}n^k.
    \]
    In particular, the coefficient of the $t^{2m-3}$ term gives the sum of all $4\times 4$ principal minors:
    \[
    {m\choose 2}n^2 = \frac{(n-1)(n-3)n^2}{8} = 8\delta_T + {n\choose 4} .    
    \]
    Solving for $\delta_T$ yields
    \[
    \delta_T = \frac{1}{96}n(n-1)(n-3)(n+1).
    \]
    By the negation of Lemma \ref{lem:not1mod4}, it must be that $n\not\equiv 1$ mod 4, a contradiction.
\end{proof}

\begin{proof}[Proof of Main Result B (Theorem \ref{thm:equiv})]
    To start, recall that Theorem \ref{thm:confgivesetf} proves $(a)$ and the reverse direction of $(b)$.
    Therefore, we prove the forward direction of $(b)$. 
    
    Suppose that $\Phi$ is a $d \times n$ ETF in $\R_{\mathcal{S}}^d$ with $n=d+1$ and let $G$ be its Gram matrix, where without loss we have $|G_{ij}| =1$ for $i\neq j$.
    By Theorem \ref{thm:mainA}, we have $n\equiv 3$ mod 4.
    Note that $\dim\ker G=1$ as $\ker \Phi = \ker G$.
    Let $x\in \ker G$ have $x^\top x=n$ and note that $\ker G \perp \operatorname{im}G$ since $\operatorname{im}G^* = \operatorname{im}G$. 
    Proposition \ref{prop:cequals} implies that the matrix $\frac{1}{n}GG^\top$ is a Hermitian idempotent hence gives Euclidean projection onto $\operatorname{im}GG^\top = \operatorname{im}G$.
    Similarly, $\frac{1}{n}xx^\top$ gives projection onto $\ker G$. 
    Therefore
    \[
    xx^\top + GG^\top = nI_n.
    \]
    Now, put $C = \left[\begin{array}{rr}
        0 & x^\top \\
        -x & G
    \end{array}\right]$ and observe
    \[
    CC^\top = \left[\begin{array}{rr}
        0 & x^\top \\
        -x & G
    \end{array}\right]\left[\begin{array}{rr}
        0 & -x^\top \\
        x & G^\top
    \end{array}\right] = \left[\begin{array}{rr}
        n & (Gx)^\top \\
        Gx & xx^\top + GG^\top
    \end{array}\right] = nI_{n+1}.
    \]
    It remains to show that the off-diagonal entries of $C$ are $\pm 1$, and so it is enough to show that the entries of $x$ are $\pm 1$.
    
    Consider $G$ as the Seidel adjacency matrix of some $n$-tournament $T$. 
    Following the same steps in the proof of Theorem \ref{thm:mainA} above, we have that
    \begin{align*}
        \delta_T = \frac{1}{96}n(n-1)(n-3)(n+1).
    \end{align*}
    Proposition $\ref{thm:belkouchethm}$ implies that the tournament $T$ with Seidel adjacency matrix $G$ is switching equivalent to a doubly regular tournament $T'$.
    As $\mathbf{1}$ spans the kernel of the Seidel adjacency matrix of $T'$, we know that $\ker G$ is spanned by a flat vector, or one where all entries have uniform modulus.
    Since $\|x\|^2 = n$, this means that $x$ has entries $\pm 1$. 
    Thus the matrix $H = C+I_{n+1}$ is a skew Hadamard matrix of order $n+1 = d+2$.
\end{proof}

The key to the proof of Theorem \ref{thm:equiv} is that the underlying combinatorial object behind a $d\times n$ ETF in real symplectic space is a tournament that saturates the upper bound on $\delta_T$.
While we have made this clear when $n=d+1$, it is also true when $n=d$.
For an arbitrary $n$-tournament $T$ with Seidel adjacency matrix $S$, the quantity $\delta_T$ is given by (see Proposition 3 in ~\cite{belkouche2020matricial})
\[
\delta_T = \frac{1}{96}n^2(n-1)(n-2) - \sum_{i<j} (S^2)_{ij}.
\]
In particular, $\delta_T$ is maximized if $C^2 = -CC^T = (1-n)I_n$, that is, if $C$ is a skew conference matrix.

\section{Doubling Construction}

Given that the Gram matrices of $d\times d$ ETFs in real symplectic space are in bijection with skew Hadamard matrices, it is natural to ask how operations involving the latter materialize as operations involving the former.
In particular, we turn our interest towards the operations used to construct skew Hadamard matrices.

Many efforts have been made toward finding skew Hadamard matrices, and these efforts have resulted in several useful constructions (see~\cite{koukouvinos2008skew} for a survey).
One such construction, often called doubling, is a specific case of Lemma 1 in~\cite{Williamson1944Hadamard} made explicit in~\cite{Wallis_1971}.
It is straightforward: If $H$ is a skew Hadamard matrix of order $d$, then
\[
H' = \begin{bmatrix}
    H & H \\
    H-2I_d & -H+2I_d
\end{bmatrix}
\]
is a skew Hadamard matrix of order $2d$. 

Consequently, if $H=C+I_d$, then the skew conference matrix $C$ is the Gram matrix of a $d\times d$ ETF $\Phi$ in $\R_\mathcal{S}^d$, and 
\[
C' = \begin{bmatrix}
    C & C+I_d \\
    C-I_d & -C
\end{bmatrix}
\]
is the Gram matrix of a $2d\times 2d$ equiangular tight frame $F$ in $\R_{\mathcal{S}}^{2d}$.
In fact, $F$ can be written in terms of $\Phi$, as we show in the following theorem.

\begin{theorem}\label{thm:sdoubling}
     Let $\Phi$ be a $d\times d$ ETF in real symplectic space with $|\Phi^\dagger\Phi_{ij}|=\mu$ for $i\neq j$ and let $B:\R_\mathcal{S}^d\to\R_\mathcal{S}^d$ be any transformation satisfying $B^\top\Omega B = -\Omega$.
     Then
     \[
     \begin{bmatrix} \Phi^\dagger\Phi & \Phi^\dagger\Phi+\mu I_d \\ \Phi^\dagger\Phi-\mu I_d & -\Phi^\dagger\Phi \end{bmatrix}
     \]
     has off-diagonal entries $\pm \mu$ and is the Gram matrix for the $2d\times 2d$ ETF in real symplectic space given by
     \[
     F = \left[\begin{array}{cc} a\mu^{-1}\Phi\Phi^\dagger\Phi & b\Phi \\
     yB\Phi & zB\Phi\end{array}\right],
     \]
     where 
     \[
     a = \left(\dfrac{(4(d-1)^2+1)^{1/2}+2d-3}{2(d-1)^2}\right)^{1/2},
     \] 
     \[
     b = \frac{1}{a(d-1)}, \qquad y=-\frac{a(d-1)}{(a^2(d-1)^2+1)^{1/2}}, \qquad z = \frac{(a^2(d-1)^2+1)^{1/2}}{a(d-1)}. 
     \]
 \end{theorem}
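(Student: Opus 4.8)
The plan is to verify directly that $F^\dagger F$ equals the displayed block matrix and that $F$ has full rank; everything else then follows from results already in place. First I would record that the displayed block matrix is exactly $\mu$ times the doubled skew conference matrix $C'$ built from $C=\mu^{-1}\Phi^\dagger\Phi$ (which is a skew conference matrix by Theorem \ref{thm:confgivesetf}). Hence it is skew symmetric with off-diagonal entries $\pm\mu$, and by the doubling discussion preceding the theorem it is the Gram matrix of \emph{some} $2d\times 2d$ ETF. The genuine content is therefore the explicit description of $F$, which I would obtain by a blockwise computation of $F^\dagger F=F^\top\hat\Omega F$, where $\hat\Omega=\Omega\oplus\Omega$ is taken as the symplectic form on $\R_\mathcal{S}^{2d}$; since this form is symplectically isomorphic to the standard one, the Gram matrix and the ETF property are unaffected by this choice.

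Second, I would collect the algebraic identities that drive the computation. Writing $G=\Phi^\dagger\Phi=\Phi^\top\Omega\Phi$, Propositions \ref{prop:tight} and \ref{prop:cequals} give that $G$ is skew symmetric with $G^2=-c^2I_d$ and $c^2=\mu^2(d-1)$; associativity gives $\Phi\Phi^\dagger\Phi=\Phi G$; and the hypothesis $B^\top\Omega B=-\Omega$ gives $(B\Phi)^\top\Omega(B\Phi)=-G$. From these one reads off the four relevant $d\times d$ inner-product blocks, namely $(\Phi\Phi^\dagger\Phi)^\top\Omega(\Phi\Phi^\dagger\Phi)=c^2G$, $(\Phi\Phi^\dagger\Phi)^\top\Omega\Phi=c^2I_d$, $\Phi^\top\Omega(\Phi\Phi^\dagger\Phi)=-c^2I_d$, and $\Phi^\top\Omega\Phi=G$.

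Third, substituting these into the four blocks of $F^\top\hat\Omega F$ and matching against the four blocks of the target (using $\mu^{-2}c^2=d-1$) collapses the whole statement to four scalar equations in $a,b,y,z$:
\[
a^2(d-1)-y^2=1,\qquad b^2-z^2=-1,\qquad yz=-1,\qquad ab(d-1)=1.
\]
Three of these are immediate from the stated formulas: $ab(d-1)=1$ and $yz=-1$ are direct cancellations, and $b^2-z^2=-1$ holds since $z^2=b^2+1$.

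The main obstacle is the first equation $a^2(d-1)-y^2=1$, which is exactly what forces the complicated closed form for $a$. Setting $s=d-1$, $w=a^2$ and substituting $y^2=ws^2/(ws^2+1)$, this equation becomes the quadratic $s^3w^2-(2s^2-s)w-1=0$, whose discriminant simplifies to $s^2\big((1-2s)^2+4s\big)=s^2(4s^2+1)$; the positive root is $w=(2s-1+\sqrt{4s^2+1})/(2s^2)$, which is precisely the stated value of $a^2$. I expect this simplification to be the one genuinely nontrivial calculation, so I would carry it out carefully. Finally, since $F^\dagger F$ now equals $\mu C'$, which has rank $2d$, the matrix $F$ must itself have rank $2d$ and hence span $\R_\mathcal{S}^{2d}$; being a frame whose Gram matrix is $\mu$ times a skew conference matrix, $F$ is an equiangular tight frame by (the reverse direction of) Theorem \ref{thm:confgivesetf}.
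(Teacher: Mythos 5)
Your proposal is correct and follows essentially the same route as the paper: both reduce the claim, via the doubling construction and a blockwise computation of $F^\dagger F$ (with the form $\Omega\oplus\Omega$, which is in fact exactly the standard form on $\R_\mathcal{S}^{2d}$), to the four scalar identities $a^2(d-1)-y^2=1$, $ab(d-1)=1$, $yz=-1$, $b^2-z^2=-1$. If anything, you supply more detail than the paper, which merely calls the identity $a^2(d-1)-y^2=1$ a ``straightforward (although monotonous) calculation,'' whereas you carry it out explicitly via the quadratic $s^3w^2-(2s^2-s)w-1=0$ in $w=a^2$.
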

 Note that the set of all transformations $B$ satisfying $B^\top\Omega B = -\Omega$ is not empty. 
 For example, let $A_1,\dots,A_{d/2}\in \R^{2\times 2}$ satisfy $\det(A_i)=-1$. 
 If $B$ is given by the matrix $\bigoplus_{i=1}^{d/2} A_i$, then $B$ satisfies $B^\top\Omega B = -\Omega$.
 \begin{proof}
     Put $\Psi = \mu^{-1/2}\Phi$ and $C=\Psi^\dagger\Psi$, so that $|C_{ij}| = 1$ for $i\neq j$ and by Theorem \ref{thm:confgivesetf} we have that $C$ is a skew conference matrix. 
     Since
     \[
    G:=\begin{bmatrix} \Phi^\dagger\Phi & \Phi^\dagger\Phi+\mu I_d \\ \Phi^\dagger\Phi-\mu I_d & -\Phi^\dagger\Phi \end{bmatrix} = \mu\begin{bmatrix}
    C & C+I_d \\
    C-I_d & -C
    \end{bmatrix},
    \]
     it follows from the doubling construction mentioned above that $G$ is the Gram matrix of a $2d\times 2d$ ETF in real symplectic space.
     It remains to show that $G= F^\dagger F$, where $F$ is as described in the theorem statement.
     
     For sake of computing the adjoint, we identify $F$ with the linear transformation $F:\R_\mathcal{E}^{2d}\to\R_{\mathcal{S}}^{2d}$, and by assumption $B\Phi$ is a map from $\R_\mathcal{E}^d$ to $\R_\mathcal{S}^d$.
     Thus
     \[
     F^\dagger = \left[\begin{array}{cc} a\mu^{-1}\Phi^\top \Omega^\top\Phi\Phi^\top & y\Phi^\top B^\top \\
     b\Phi^\top & z\Phi^\top B^\top\end{array}\right]\begin{bmatrix}
         \Omega & 0 \\ 0 &\Omega
     \end{bmatrix} = \left[\begin{array}{cc}-a\mu^{-1}\Phi^\dagger\Phi\Phi^\dagger & y(B\Phi)^\dagger \\
     b\Phi^\dagger & z(B\Phi)^\dagger\end{array}\right]
     \]
     Computing, we have
     \begin{align*}
         F^\dagger F &= \left[\begin{array}{cc}-a\mu^{-1}\Phi^\dagger\Phi\Phi^\dagger & y(B\Phi)^\dagger \\
         b\Phi^\dagger & z(B\Phi)^\dagger\end{array}\right]\left[\begin{array}{cc} a\mu^{-1}\Phi\Phi^\dagger\Phi & b\Phi \\
         y B\Phi & z B\Phi\end{array}\right] \\
         &=\mu^{1/2}\left[\begin{array}{cc}-a\mu^{-3/2}\Phi^\dagger\Phi\Phi^\dagger & y\mu^{-1/2}(B\Phi)^\dagger \\
         b\mu^{-1/2}\Phi^\dagger & z\mu^{-1/2}(B\Phi)^\dagger\end{array}\right]\mu^{1/2}\left[\begin{array}{cc} a\mu^{-3/2}\Phi\Phi^\dagger\Phi & b\mu^{-1/2}\Phi \\
         y \mu^{-1/2}B\Phi & z \mu^{-1/2}B\Phi\end{array}\right] \\
         &= \mu\left[\begin{array}{cc}-a\Psi^\dagger\Psi\Psi^\dagger & y\Psi^\top B^\top\Omega \\
         b\Psi^\dagger & z\Psi^\top B^\top\Omega\end{array}\right]\left[\begin{array}{cc} a\Psi\Psi^\dagger\Psi & b\Psi \\
         yB\Psi & zB\Psi\end{array}\right] \\
         &= \mu\left[\begin{array}{cc} 
         -a^2(\Psi^\dagger\Psi)^3+y^2\Psi^\top B^\top \Omega B \Psi & -ab(\Psi^\dagger \Psi)^2 +yz\Psi^\top B^\top \Omega B \Psi \\
         ba(\Psi^\dagger\Psi)^2 + yz \Psi^\top B^\top \Omega B \Psi & b^2 \Psi^\dagger\Psi + z^2\Psi^\top B^\top \Omega B \Psi
         \end{array}\right] \\
         &= \mu\left[\begin{array}{cc}
         -a^2(\Psi^\dagger\Psi)^3-y^2\Psi^\dagger\Psi & -ab(\Psi^\dagger\Psi)^2 - yz\Psi^\dagger\Psi \\
         ba(\Psi^\dagger\Psi)^2-yz\Psi^\dagger\Psi & b^2\Psi^\dagger\Psi -z^2\Psi^\dagger\Psi
         \end{array}\right] \\
         &= \mu\left[\begin{array}
         {cc}
         -a^2C^3-y^2C & -abC^2-yzC \\
         baC^2-yzC & (b^2-z^2)C
         \end{array}\right] \\
         &= \mu\left[\begin{array}{cc}
         (a^2(d-1)-y^2)C & -yzC + ab(d-1)I_d \\
         -yzC-ab(d-1)I_d & (b^2-z^2)C
         \end{array}\right].
     \end{align*}
     
     The result follows from the identities
     \begin{align*}
         a^2(d-1)-y^2 &= 1, & ab &= \frac{1}{d-1}, \\
         yz &= -1, & b^2-z^2 &= -1.
     \end{align*}
      Of them, the first is a straightforward (although monotonous) calculation, while the rest are immediate.
 \end{proof}

\section{Connection to Complex ETFs}
Recall from Example \ref{ex:complexsympspace} that the imaginary part of the Hermitian product on $\C^d$ is a symplectic form.
Thus in a sense, symplectic forms are `shadows' cast by the Hermitian inner product.
In a similar way, ETFs in real symplectic space are `shadows' cast by certain complex ETFs. 
Given a complex matrix
    \[\Psi = \begin{bmatrix}
        - &\psi_1^* & - \\
        & \vdots & \\
        - & \psi_d^* & -
    \end{bmatrix} \in \C^{d\times n}, \quad \text{ define } \Psi_\diamond := \begin{bmatrix}
        - &\operatorname{Re}\psi_1^* & - \\
        - & \operatorname{Im}\psi_1^* & - \\
        & \vdots & \\
        - & \operatorname{Re}\psi_d^* & - \\
        - & \operatorname{Im}\psi_d^* & -
    \end{bmatrix}\in \R^{2d\times n}.\]
     Essentially, $\Psi_\diamond$ takes the complex rows of $\Psi$ and splits them into real and imaginary parts, stacking them on top of each other.
     We identify $\Psi_\diamond$ with the linear transformation $\Psi_\diamond:\R_{\mathcal{E}}^n\to \R_{\mathcal{S}}^{2d}$, so that $\Psi_\diamond^\dagger = \Psi_\diamond^\top \Omega$.
    
    Given $\Psi\in \C^{d\times n}$, we have $\operatorname{Im}(\Psi^*\Psi) = \Psi_\diamond^\dagger\Psi_\diamond^{\phantom{\dagger}}$. 
    Indeed, label the rows of $\Psi$ as $\psi_i^*$. 
    Then
    \[
        \operatorname{Im}\Psi^*\Psi =\sum_{i=1}^d \operatorname{Im}\psi_i\psi_i^* 
        = \sum_{i=1}^d\operatorname{Re}\psi_i\operatorname{Im}\psi_i^*+\operatorname{Im}\psi_i\operatorname{Re}\psi_i^*. 
    \]
    Meanwhile,
    \[
    \Psi_\diamond^\dagger\Psi_\diamond^{\phantom{\dagger}} = \sum_{i=1}^d(\operatorname{Re}\psi_i^*)^\top\operatorname{Im}\psi_i^*-(\operatorname{Im}\psi_i^*)^\top\operatorname{Re}\psi_i^*=\sum_{i=1}^d \operatorname{Re}\psi_i\operatorname{Im}\psi_i^*+\operatorname{Im}\psi_i\operatorname{Re}\psi_i^*.
    \]

Theorem \ref{thm:symplecticfromcomplex} below shows that given any ETF $\Phi$ in real symplectic space, there exists a complex ETF $\Psi$ such that $\operatorname{Im}(\Psi^*\Psi) = \Phi^\dagger\Phi$, up to a scaling. 
Moreover, suppose $\Psi$ is a $\frac{d}{2}\times n$ complex ETF, where $n\in \{d,d+1\}$, such that $|(\Psi^*\Psi)_{ij}|=\mu$ for $i\neq j$.
If the signature matrix $Q=\frac{1}{\mu}(\Psi^*\Psi-I_d)$ of $\Psi$ has specific entries, Theorem \ref{thm:symplecticfromcomplex} says that $\operatorname{Im}(Q)$ is the Gram matrix of an ETF in real symplectic space.
To prove this theorem, we use the following information about the signature matrix of a complex ETF.
\begin{proposition}{\cite{HP:04}}\label{prop:signature}
Let $Q\in \C^{n\times n}$ be self-adjoint with $Q_{ii} = 0$ for all $i$ and $|Q_{ij}| = 1$ for all $i\neq j$, and let $d$ be a positive integer. Then the following are equivalent:
\begin{itemize}
    \item[(i)] $Q$ is the signature matrix of a $d\times n$ complex ETF,
    \item[(ii)] $Q^2 = cQ+(n-1)I_n$, where $c = (n-2d)\sqrt{\frac{n-1}{d(n-d)}}$.
\end{itemize}
\end{proposition}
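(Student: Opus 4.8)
The plan is to translate tightness of the ETF into a single quadratic matrix identity for its Gram matrix, which will then be equivalent to the asserted identity for $Q$. Write $\mu = \sqrt{\frac{n-d}{d(n-1)}}$ for the Welch bound value and, given a self-adjoint $Q$ with zero diagonal and unimodular off-diagonal entries, set $G = I_n + \mu Q$. The central observation is that a matrix $G$ is the Gram matrix of a $d\times n$ complex ETF exactly when $G$ is positive semidefinite of rank $d$, has all diagonal entries equal to $1$ and all off-diagonal entries of modulus $\mu$, and satisfies the projection-type identity $G^2 = \frac{n}{d}G$; indeed, tightness $\Psi\Psi^* = \frac{n}{d}I_d$ is equivalent to $\frac{d}{n}G$ being an orthogonal projection of rank $d$.

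For the implication (i) $\Rightarrow$ (ii), I would start from an ETF with Gram matrix $G = I_n + \mu Q$, where equiangularity together with tightness forces the common off-diagonal modulus to equal the Welch bound $\mu$. Tightness gives $G^2 = \frac{n}{d}G$. Substituting $G = I_n + \mu Q$ and expanding yields
\[
\mu^2 Q^2 = \Big(\tfrac{n}{d}-1\Big)I_n + \Big(\tfrac{n}{d}-2\Big)\mu\, Q.
\]
Dividing by $\mu^2$ and inserting $\mu^2 = \frac{n-d}{d(n-1)}$, a direct simplification shows the constant coefficient collapses to $n-1$ and the coefficient of $Q$ becomes exactly $(n-2d)\sqrt{\frac{n-1}{d(n-d)}} = c$. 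This is a routine computation.

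For (ii) $\Rightarrow$ (i), I would argue spectrally. The identity $Q^2 = cQ + (n-1)I_n$ forces every eigenvalue $\lambda$ of the self-adjoint matrix $Q$ to satisfy $\lambda^2 - c\lambda - (n-1) = 0$. Computing the discriminant and using the algebraic identity $(n-2d)^2 + 4d(n-d) = n^2$, one finds $c^2 + 4(n-1) = \frac{n^2(n-1)}{d(n-d)}$, so $Q$ has at most the two eigenvalues $\lambda_+ = (n-d)\sqrt{\frac{n-1}{d(n-d)}}$ and $\lambda_- = -d\sqrt{\frac{n-1}{d(n-d)}}$. Under $G = I_n + \mu Q$ these map to $1 + \mu\lambda_+ = \frac{n}{d}$ and $1 + \mu\lambda_- = 0$. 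Hence $G$ is positive semidefinite with spectrum in $\{0, \frac{n}{d}\}$, so $G^2 = \frac{n}{d}G$, and the trace identity $\operatorname{tr} G = n$ forces the eigenvalue $\frac{n}{d}$ to have multiplicity exactly $d$, i.e.\ $\operatorname{rank} G = d$. Factoring the positive semidefinite rank-$d$ matrix as $G = \Psi^*\Psi$ with $\Psi \in \C^{d\times n}$, the diagonal and off-diagonal conditions make the columns of $\Psi$ unit-norm and equiangular, while matching the nonzero spectra of $\Psi^*\Psi$ and $\Psi\Psi^*$ gives $\Psi\Psi^* = \frac{n}{d}I_d$, so $\Psi$ is tight. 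Thus $Q$ is the signature matrix of a $d\times n$ complex ETF.

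The computations are all elementary, so the only real work is conceptual: recognizing at the outset that tightness is precisely the quadratic (projection) relation $G^2 = \frac{n}{d}G$, which is what lets the two quadratic identities be matched. The step most prone to arithmetic slips is the discriminant simplification and the verification that the eigenvalues of $G$ land exactly on $\{0, \frac{n}{d}\}$; both hinge on substituting the Welch-bound value of $\mu$ and on the identity $(n-2d)^2 + 4d(n-d) = n^2$. A minor point to handle with care in the reverse direction is deducing genuine tightness from the factorization, which follows because the $d\times d$ frame operator $\Psi\Psi^*$ shares its nonzero eigenvalues with the Gram matrix $\Psi^*\Psi$.
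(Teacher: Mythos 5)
Your proof is correct, but note that the paper does not actually prove this proposition: it is imported verbatim from the cited reference \cite{HP:04} (Holmes--Paulsen), so there is no in-paper argument to compare against. What you have written is essentially the standard proof of that result, and it is the same strategy used in the cited source: encode tightness of a unit-norm frame as the projection identity $G^2 = \frac{n}{d}G$ for the Gram matrix $G = I_n + \mu Q$, match the two quadratics to get (i) $\Rightarrow$ (ii), and for (ii) $\Rightarrow$ (i) use the quadratic to pin the spectrum of $Q$ to $\bigl\{(n-d)\sqrt{\tfrac{n-1}{d(n-d)}},\, -d\sqrt{\tfrac{n-1}{d(n-d)}}\bigr\}$, translate to $\sigma(G) \subseteq \{0, \tfrac{n}{d}\}$, use $\operatorname{tr}G = n$ to force $\operatorname{rank}G = d$, and factor $G = \Psi^*\Psi$. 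I checked the computations you flagged as slip-prone: the identity $(n-2d)^2 + 4d(n-d) = n^2$, the resulting discriminant $c^2 + 4(n-1) = \frac{n^2(n-1)}{d(n-d)}$, and the eigenvalue images $1+\mu\lambda_+ = \frac{n}{d}$, $1+\mu\lambda_- = 0$ are all right, as is the coherence-equals-Welch-bound step (a trace computation). The only implicit hypothesis worth stating is $d < n$, which is already forced by the formula for $c$ (and gives $\mu > 0$, needed when you divide by $\mu^2$); with that said, your argument is complete and would serve as a self-contained proof of the cited result.
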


\begin{theorem}\label{thm:symplecticfromcomplex}
Let $\Phi$ be a $d\times n$ frame in real symplectic space, where $n\in \{d,d+1\}$.
    \begin{enumerate} 
        \item[(a)] If $n=d$, then $\Phi$ is an ETF if and only if there exists a scalar $\alpha>0$ and a $\frac{d}{2}\times d$ complex ETF $\Psi$ with $\operatorname{Im}\Psi^*\Psi = \alpha \Phi^\dagger\Phi$, such that the signature matrix of $\Psi$ has entries only in $\{0,\pm i\}$. 
        \item[(b)] If $n=d+1$, then $\Phi$ is an ETF if and only if there exists a scalar $\alpha>0$ and a $\frac{d}{2}\times (d+1)$ complex ETF $\Psi$ with $\operatorname{Im}\Psi^*\Psi = \alpha \Phi^\dagger\Phi$, such that the signature matrix of $\Psi$ has entries only in $\{0,\beta,\overline{\beta}\}$ where 
        \[\beta = -\frac{1}{d+2}+i\sqrt{1 - \frac{1}{d+2}}.\]. 
    \end{enumerate}
\end{theorem}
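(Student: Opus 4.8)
The plan is to move back and forth between the symplectic Gram matrix $\Phi^\dagger\Phi$ and the signature matrix $Q$ of a complex ETF, exploiting two facts already in hand: the dictionary of Theorem \ref{thm:confgivesetf} between Gram matrices of symplectic ETFs and skew conference matrices (when $n=d$) or their cores (when $n=d+1$), and the quadratic characterization of complex ETF signature matrices in Proposition \ref{prop:signature}. The bridge between the two worlds is the identity $\operatorname{Im}(\Psi^*\Psi)=\mu\operatorname{Im}(Q)$, valid for a unit-norm complex ETF $\Psi$ of coherence $\mu$, which follows at once from $\Psi^*\Psi=\mu Q+I$. I normalize throughout so that off-diagonal moduli equal $1$; since uniformly scaling frame vectors preserves both equiangularity and tightness (Definition \ref{def:equiangular} and Proposition \ref{prop:tight}), no generality is lost.

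For part (a), Proposition \ref{prop:signature} with complex dimension $d/2$ and $n=d$ gives $c=(d-2\cdot\tfrac{d}{2})\sqrt{\tfrac{n-1}{d_{\C}(n-d_{\C})}}=0$, so a candidate signature matrix must satisfy $Q^2=(d-1)I_d$. I would set $Q=iS$ with $S$ real: self-adjointness of $Q$ is then equivalent to $S$ being skew symmetric, the conditions $Q_{ii}=0$ and $|Q_{ij}|=1$ force $S$ to have zero diagonal and $\pm1$ off-diagonal, and $Q^2=(d-1)I_d$ becomes $SS^\top=(d-1)I_d$, i.e.\ $S$ is a skew conference matrix. Since $\operatorname{Im}(Q)=S$, the relation $\operatorname{Im}(\Psi^*\Psi)=\mu S$ equals $\alpha\Phi^\dagger\Phi$ precisely when $\Phi^\dagger\Phi$ is a positive multiple of $S$; invoking Theorem \ref{thm:confgivesetf}(a) in both directions closes the equivalence, with $\alpha=\mu/|\Phi^\dagger\Phi_{12}|$.

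For part (b), the same proposition with $n=d+1$ yields $c=\tfrac{2}{\sqrt{d+2}}$, so $Q^2=cQ+dI_{d+1}$. Writing $A=J-I_{d+1}$ and letting $K$ be the core of an order-$(d+2)$ skew conference matrix (a skew-symmetric $\pm1$ matrix with $K^2=J-(d+1)I_{d+1}$ and $K\mathbf{1}=0$), I would test the ansatz $Q=rA+isK$ with $r=\operatorname{Re}\beta$, $s=\operatorname{Im}\beta$, and $r^2+s^2=1$. Using $A^2=(d-1)J+I_{d+1}$, $JK=KJ=0$ (hence $AK+KA=-2K$), and the relation for $K^2$, one computes
\[
Q^2 = \bigl(r^2(d-1)-s^2\bigr)J + \bigl(r^2+s^2(d+1)\bigr)I_{d+1} - 2irs\,K .
\]
Comparing with $cQ+dI_{d+1}=crJ+(d-cr)I_{d+1}+icsK$ forces $r=-c/2$ from the $K$-term, while the $J$- and $I$-terms both collapse to the single constraint $c^2=\tfrac{4}{d+2}$, which is exactly the value of $c$ from Proposition \ref{prop:signature}. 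Thus the off-diagonal entries of $Q$ are $\beta=-\tfrac{c}{2}+i\sqrt{1-\tfrac{c^2}{4}}$ and its conjugate, so $\operatorname{Re}\beta=-\tfrac{1}{\sqrt{d+2}}$ and $\operatorname{Im}\beta=\sqrt{1-\tfrac{1}{d+2}}$. The forward direction produces $\Psi$ from $K$ via Proposition \ref{prop:signature}; for the converse I would read off $K=\tfrac{1}{\operatorname{Im}\beta}\operatorname{Im}(Q)$, re-derive $K^2=J-(d+1)I_{d+1}$ from $Q^2=cQ+dI_{d+1}$, deduce $K\mathbf{1}=0$ (since $KK^\top\mathbf{1}=((d+1)I_{d+1}-J)\mathbf{1}=0$ forces $K^\top\mathbf{1}=0$), and assemble $C=\left[\begin{smallmatrix}0 & \mathbf{1}^\top\\ -\mathbf{1} & K\end{smallmatrix}\right]$, which one checks satisfies $CC^\top=(d+1)I_{d+2}$ and is therefore a skew conference matrix with core $K$. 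Theorem \ref{thm:confgivesetf}(b) then identifies $\Phi^\dagger\Phi$, a positive multiple of $K$, as the Gram matrix of a $d\times(d+1)$ symplectic ETF.

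The main obstacle is the case-(b) verification on both ends: pinning down $\beta$ through the quadratic identity $Q^2=cQ+dI_{d+1}$ (where the value $\operatorname{Re}\beta=-c/2$ must drop out of the $K$-coefficient and be shown consistent with the $J$- and $I$-coefficients), and, in the converse, promoting $\operatorname{Im}(Q)$ to the core of a genuine skew conference matrix. Case (a) is comparatively transparent, since $c=0$ collapses $Q$ to $iS$ immediately. The only other point to monitor is the unit normalization of the complex ETF, which is what legitimizes $\operatorname{Im}(\Psi^*\Psi)=\mu\operatorname{Im}(Q)$; the remainder is bookkeeping with the correspondence of Theorem \ref{thm:confgivesetf}.
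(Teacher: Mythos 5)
Your part (a) and your converse of part (b) are correct and essentially follow the paper. In (a) you obtain the complex ETF directly from Proposition \ref{prop:signature} with $c=0$ (the paper instead cites the conference-matrix literature for this step; your route is more self-contained), and the reverse implication is the paper's argument verbatim. Your converse of (b) --- recovering $K^2=J_{d+1}-(d+1)I_{d+1}$ from the quadratic, deducing $K\mathbf{1}=0$, and bordering $K$ into $C$ with $CC^\top=(d+1)I_{d+2}$ --- is precisely the alternative ending the paper sketches in the remark after its proof; the paper's own proof instead runs an eigenvalue/minimal-polynomial argument to get $K^3=-(d+1)K$ and concludes via Proposition \ref{prop:tight}. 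Your ansatz computation pinning down $\operatorname{Re}\beta=-1/\sqrt{d+2}$ also supplies a proof of the fact the paper only cites, and your value is the correct one: the statement's $-1/(d+2)$ is a typo, as the paper's own proof uses $(\operatorname{Re}\beta)^2=1/(d+2)$.

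The gap is in the forward direction of (b). Your construction builds $\Psi$ from a core $K$, so $\operatorname{Im}\Psi^*\Psi$ comes out proportional to $K$; but the scaled Gram matrix $G=\frac{1}{|(\Phi^\dagger\Phi)_{12}|}\Phi^\dagger\Phi$ of the given symplectic ETF is generally \emph{not} a core. By the proof of Theorem \ref{thm:equiv}, $\ker G$ is spanned by a flat $\pm1$ vector $x$ which need not be $\mathbf{1}$; $G$ is only switching-equivalent to a core, namely $K=DGD$ with $D=\operatorname{diag}(x)$. So your $\Psi$ satisfies $\operatorname{Im}\Psi^*\Psi=\alpha DGD$, not $\alpha\Phi^\dagger\Phi$, and the required identity fails whenever $x\neq\pm\mathbf{1}$. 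The paper closes this hole by conjugating back: $DQD$ satisfies the same quadratic $Q^2=\frac{2}{\sqrt{d+2}}Q+dI_{d+1}$, hence by Proposition \ref{prop:signature} it is itself the signature matrix of a complex ETF $\Psi$, and then $\operatorname{Im}\Psi^*\Psi=\mu(\operatorname{Im}\beta)DKD=\alpha\Phi^\dagger\Phi$. You need this extra step, and it is not cost-free: conjugation by $D$ sends some entries $\beta\mapsto-\beta\notin\{0,\beta,\overline{\beta}\}$, so the literal entry condition survives only up to switching. Indeed, for $d=2$, taking $G=DKD$ with $K$ the core from Example \ref{ex:conf} and $D=\operatorname{diag}(1,1,-1)$, the unique candidate signature matrix with entries in $\{0,\beta,\overline{\beta}\}$ and imaginary part proportional to $G$ fails the quadratic $Q^2=Q+2I_3$, so no complex ETF witnesses the statement as literally written. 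Your gap thus sits exactly where the paper's own proof is most delicate (and where its statement needs the entry set $\{0,\pm\beta,\pm\overline{\beta}\}$, or an ``up to switching'' qualifier).
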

\begin{proof}
    First, we prove $(a)$.
    Suppose that $n=d$ and that $\Phi$ is an ETF in real symplectic space. 
    Set $\alpha = (|(\Phi^\dagger\Phi)_{12}|\sqrt{d-1})^{-1}$ and $C = \frac{1}{|(\Phi^\dagger\Phi)_{12}|}\Phi^\dagger\Phi$.
    By Theorem \ref{thm:confgivesetf}, $C$ is a skew conference matrix.
    Therefore, the matrix $G = I_d+i(d-1)^{-1/2}C$ is the Gram matrix of a $\frac{d}{2}\times d$ complex ETF $\Psi$ (see~\cite{delsarte1991bounds}) with purely imaginary signature matrix $iC$. 
    Moreover, $\Psi$ satisfies 
    \[
    \operatorname{Im}\Psi^*\Psi = \operatorname{Im}G=(d-1)^{-1/2}C = \alpha\Phi^\dagger\Phi,
    \]
    as desired.

    In the reverse direction, let $Q$ be the purely imaginary signature matrix of $\Psi$. 
    Then $$-iQ = \operatorname{Im}(Q) = \frac{1}{|(\Psi^*\Psi)_{12}|}\operatorname{Im}(\Psi^*\Psi)= \frac{\alpha}{|(\Psi^*\Psi)_{12}|}\Phi^\dagger\Phi.$$
    It follows that $Q$ and $\operatorname{Im}Q$ are skew symmetric.
    By Proposition \ref{prop:signature}, we see that $Q^2 = (d-1)I_d$. 
    Thus  
    \[
    \operatorname{Im}(Q)(\operatorname{Im}(Q))^\top = (-iQ)(-iQ)^\top = -QQ^\top = Q^2 = (d-1)I_d.
    \]
    Because $Q_{ij}\in\{\pm i\}$ when $i\neq j$, it follows from the equality above that $\operatorname{Im}(Q)=\frac{1}{|(\Psi^*\Psi)_{12}|}\alpha\Phi^\dagger\Phi$ is a skew conference matrix.
    By Theorem 23, $\Phi$ is an ETF in real symplectic space.

    For $(b)$, suppose that $n=d+1$ and that $\Phi$ is an ETF in real symplectic space.  
    By the proof of Theorem \ref{thm:equiv}, there exists a diagonal $\pm1$-matrix $D$ so that $K:=\frac{1}{|(\Phi^\dagger\Phi)_{12}|} D\Phi^\dagger\Phi D$ is the core of a normalized skew conference matrix.
    Decompose $K$ as $K=A-A^\top$, where $A$ is a $(0,1)$-matrix of order $d+1$.
    We have (see~\cite{fallon2023optimal,strohmer2008note}) that $Q = \beta A+\overline{\beta}A^\top$ is the signature matrix of a $\frac{d}{2}\times (d+1)$ complex ETF. 
    From Proposition \ref{prop:signature}, we know that $Q^2 = \frac{2}{\sqrt{d+2}}Q+dI_{d+1}$.
    The matrix $DQD$ satisfies the same quadratic, and so $DQD$ is the signature matrix of a $\frac{d}{2}\times (d+1)$ complex ETF $\Psi$.
    Put 
    \[
    \mu = |(\Psi^*\Psi)_{12}|, \qquad  \alpha = \frac{\mu(\operatorname{Im} \beta)}{|(\Phi^\dagger\Phi)_{12}|},  
    \]
    and observe
    \begin{align*}
    \operatorname{Im}\Psi^*\Psi &= \operatorname{Im}(I_{d+1}+\mu DQD) \\
    &= \mu\operatorname{Im}(DQD) \\
    &= \mu D\operatorname{Im}(\beta A+\overline{\beta}A^\top )D  \\
    &=  \mu(\operatorname{Im} \beta) D(A-A^\top)D \\
    &= \mu(\operatorname{Im} \beta) DKD \\
    &= \alpha \Phi^\dagger\Phi,
    \end{align*}
    as desired.

    In the reverse direction, suppose that $\Psi$ is the described $\frac{d}{2}\times (d+1)$ complex ETF. 
    Then the signature matrix $Q$ of $\Psi$ can be written $Q=\beta A+\overline{\beta}A^\top$, where $A$ is a $(0,1)$-matrix of order $d+1$.
    By Proposition \ref{prop:signature}, the signature matrix satisfies
    \[ 
    Q^2 = \frac{2}{\sqrt{d+2}}Q + dI_{d+1}.
    \]
    Set $K = A-A^\top$ and let $\varepsilon = \operatorname{Re}\beta$ and $\gamma = \operatorname{Im}\beta$.
    Then 
    \begin{align*}
        Q^2 &= (\operatorname{Re}(Q)+i\operatorname{Im}(Q))^2 \\
        &= (\varepsilon(A+A^\top)+i\gamma(A-A^\top))^2 \\
        &= (\varepsilon(J_{d+1}-I_{d+1})+i\gamma K)^2 \\
        &= (\varepsilon^2(d-1)J_{d+1}+\varepsilon^2I_{d+1}-\gamma^2K)+ i\varepsilon\gamma(J_{d+1}K+KJ_{d+1}-2K)
    \end{align*}
    while also
    \[
    Q^2 = \frac{2}{\sqrt{d+2}}Q+dI_{d+1}=\frac{2}{\sqrt{d+2}}\left(\varepsilon(J_{d+1}-I_{d+1})+i\gamma K\right)+dI_{d+1}.
    \]
    Equating real parts and solving for $K^2$, we have
    \begin{align*}
        K^2 &= -(\gamma^{-2})\left(\left(\frac{2\varepsilon}{\sqrt{d+2}}-\varepsilon^2(d-1)\right)J_{d+1}+\left(d-\frac{2\varepsilon}{\sqrt{d+2}}-\varepsilon^2\right) I_{d+1}\right) \\
        &= \left(-\frac{d+2}{d+1}\right)\left(\left(-\frac{2}{d+2}-\frac{d-1}{d+2}\right)J_{d+1}+\left(d+\frac{1}{d+2}\right)I_{d+1}\right) \\
        &= J_{d+1}-(d+1)I_{d+1}.
    \end{align*}
    The eigenvalues of $K^2$ are thus $0$ with multiplicity 1 and $-(d+1)$ with multiplicity $d$. 
    It follows that the eigenvalues of $K$ are $0$ with multiplicity 1, and $\pm i\sqrt{d+1}$ each with multiplicity $d/2$ due to appearing in conjugate pairs. 
    As $K$ is skew symmetric, hence diagonalizable, its minimal polynomial splits as a product of distinct linear factors.
    In particular,
    \[
    K(K+i\sqrt{d+1}I_{d+1})(K-i\sqrt{d+1}I_{d+1}) = K^3+(d+1)K = 0.
    \]
    Thus $K$ has off-diagonal entries $\pm 1$ and satisfies $K^3 = -(d+1)K$.
    In particular, 
    \[
    K = \frac{1}{\gamma|(\Psi^*\Psi)_{12}|}\operatorname{Im}(\Psi^*\Psi)=\frac{1}{\gamma|(\Psi^*\Psi)_{12}|}\alpha\Phi^\dagger\Phi,
    \]
    and so $\Phi^\dagger\Phi$ is an ETF.
    \end{proof}

As an aside, the end of the proof of Theorem \ref{thm:symplecticfromcomplex} can be approached differently. 
Since $K$ has off-diagonal entries $\pm 1$ and is of odd order, the proof of Proposition \ref{prop:gerzon} shows that $K$ has a one-dimensional kernel. 
Because $\ker K \subseteq \ker K^2 = \operatorname{span}\{\mathbf{1}\}$, it must be that $\ker K=\operatorname{span}\{\mathbf{1}\}$. 
Building the matrix
\[
C=\begin{bmatrix}
    0 & \mathbf{1}^\top \\
    -\mathbf{1} & K
\end{bmatrix}
\]
and applying the identity $KK^\top = (d+1)I_{d+1}-J_{d+1}$, it is straightforward to see that $K$ is the core of a normalized skew conference matrix.
Alternatively yet, one can use the fact that $\mathbf{1}\in \ker K$ to deduce that $K$ is the Seidel adjacency matrix of a regular tournament $T$.
Together with the identity $K^2=J_{d+1}-(d+1)I_{d+1}$, Lemma 29 shows that $T$ is in fact doubly regular.
 
\section*{Acknowledgments}
The author is immensely grateful to Joseph W. Iverson for his guidance, thoughtfulness, and wisdom as a Ph.D. advisor. 
Additionally, the author is deeply appreciative of his comprehensive review of this document, for which his numerous suggestions have greatly improved the final product.
The author would also like to thank John Jasper and Dustin Mixon for insightful conversation.
This research was supported in part by the Johnson-Peters Summer Fellowship
from the Iowa State University Department of Mathematics.
\bigskip

\bibliographystyle{abbrv}
\bibliography{myrefs}

\begin{thebibliography}{10}

\bibitem{appleby2019tight}
M.~Appleby, I.~Bengtsson, S.~Flammia, and D.~Goyeneche.
\newblock Tight frames, {H}adamard matrices and {Z}auner’s conjecture.
\newblock {\em J. Phys. A}, 52(29):295301, 2019.

\bibitem{appleby2022sic}
M.~Appleby, I.~Bengtsson, M.~Grassl, M.~Harrison, and G.~McConnell.
\newblock {SIC-POVM}s from {S}tark units: Prime dimensions $n^2+3$.
\newblock {\em J. Math. Phys.}, 63(11), 2022.

\bibitem{appleby2025constructiveapproachzaunersconjecture}
M.~Appleby, S.~T. Flammia, and G.~S. Kopp.
\newblock A constructive approach to {Z}auner's conjecture via the {S}tark
  conjectures.
\newblock arXiv:2501.03970.

\bibitem{bandeira2013road}
A.~S. Bandeira, M.~Fickus, D.~G. Mixon, and P.~Wong.
\newblock The road to deterministic matrices with the restricted isometry
  property.
\newblock {\em J. Fourier Anal. Appl.}, 19(6):1123--1149, 2013.

\bibitem{barenco1995elementary}
A.~Barenco, C.~H. Bennett, R.~Cleve, D.~P. DiVincenzo, N.~Margolus, P.~Shor,
  T.~Sleator, J.~A. Smolin, and H.~Weinfurter.
\newblock Elementary gates for quantum computation.
\newblock {\em Phys. Rev. A}, 52(5):3457, 1995.

\bibitem{belkouche2020matricial}
W.~Belkouche, A.~Boussa{\"\i}ri, S.~Lakhlifi, and M.~Zaidi.
\newblock Matricial characterization of tournaments with maximum number of
  diamonds.
\newblock {\em Discrete Math.}, 343(4):111699, 2020.

\bibitem{benedetto2003finite}
J.~J. Benedetto and M.~Fickus.
\newblock Finite normalized tight frames.
\newblock {\em Adv. Comput. Math.}, 18:357--385, 2003.

\bibitem{chen2015general}
B.~Chen, T.~Li, and S.-M. Fei.
\newblock General {SIC} measurement-based entanglement detection.
\newblock {\em Quantum Inf. Process.}, 14(6):2281--2290, 2015.

\bibitem{colbourn2010crc}
C.~J. Colbourn.
\newblock {\em {CRC} handbook of combinatorial designs}.
\newblock CRC press, 2010.

\bibitem{delsarte1991bounds}
P.~Delsarte, J.~Goethals, and J.~Seidel.
\newblock Bounds for systems of lines, and {J}acobi polynomials.
\newblock In {\em Geometry and Combinatorics}, pages 193--207. Elsevier, 1991.

\bibitem{fallon2023optimal}
K.~Fallon and J.~W. Iverson.
\newblock On the optimal arrangement of $2d$ lines in $\mathbb{C}^d$.
\newblock {\em Inf. Inference}, 14(2):iaaf008, 2025.

\bibitem{fannjiang2020numerics}
A.~Fannjiang and T.~Strohmer.
\newblock The numerics of phase retrieval.
\newblock {\em Acta Numer.}, 29:125--228, 2020.

\bibitem{fuchs2017sic}
C.~A. Fuchs, M.~C. Hoang, and B.~C. Stacey.
\newblock The {SIC} question: {H}istory and state of play.
\newblock {\em Axioms}, 6(3):21, 2017.

\bibitem{grassl2017fibonacci}
M.~Grassl and A.~J. Scott.
\newblock Fibonacci-{L}ucas {SIC-POVM}s.
\newblock {\em J. Math. Phys.}, 58(12), 2017.

\bibitem{Graydon2015QuantumCD}
M.~A. Graydon and D.~M. Appleby.
\newblock Quantum conical designs.
\newblock {\em J. Phys. A}, 49, 2015.

\bibitem{MR4339582}
G.~R.~W. Greaves, J.~W. Iverson, J.~Jasper, and D.~G. Mixon.
\newblock Frames over finite fields: {B}asic theory and equiangular lines in
  unitary geometry.
\newblock {\em Finite Fields Appl.}, 77:101954, 2022.

\bibitem{MR4364998}
G.~R.~W. Greaves, J.~W. Iverson, J.~Jasper, and D.~G. Mixon.
\newblock Frames over finite fields: {E}quiangular lines in orthogonal
  geometry.
\newblock {\em Linear Algebra Appl.}, 639:50--80, 2022.

\bibitem{hadamard1893resolution}
J.~Hadamard.
\newblock R{\'e}solution d'une question relative aux d{\'e}terminants.
\newblock {\em Bull. Sci. Math.}, 2:240--246, 1893.

\bibitem{hedayat1978hadamard}
A.~Hedayat and W.~D. Wallis.
\newblock Hadamard matrices and their applications.
\newblock {\em Ann. Statist.}, 6(6):1184--1238, 1978.

\bibitem{herman2009high}
M.~A. Herman and T.~Strohmer.
\newblock High-resolution radar via compressed sensing.
\newblock {\em IEEE Trans. Signal Process.}, 57(6):2275--2284, 2009.

\bibitem{HP:04}
R.~B. Holmes and V.~I. Paulsen.
\newblock Optimal frames for erasures.
\newblock {\em Linear Algebra Appl.}, 377:31--51, 2004.

\bibitem{horn2012matrix}
R.~A. Horn and C.~R. Johnson.
\newblock {\em Matrix analysis}.
\newblock Cambridge university press, 2012.

\bibitem{kharaghani2005hadamard}
H.~Kharaghani and B.~Tayfeh-Rezaie.
\newblock A {H}adamard matrix of order 428.
\newblock {\em J. Combin. Des.}, 13(6):435--440, 2005.

\bibitem{koukouvinos2008skew}
C.~Koukouvinos and S.~Stylianou.
\newblock On skew-{H}adamard matrices.
\newblock {\em Discrete Math.}, 308(13):2723--2731, 2008.

\bibitem{lemmens1973equiangular}
P.~W. Lemmens and J.~J. Seidel.
\newblock Equiangular lines.
\newblock {\em J. Algebra}, 24(3):494--512, 1973.

\bibitem{leung2002simulation}
D.~Leung.
\newblock Simulation and reversal of $n$-qubit {H}amiltonians using {H}adamard
  matrices.
\newblock {\em J. Modern Opt.}, 49(8):1199--1217, 2002.

\bibitem{oktay2007frame}
O.~Oktay.
\newblock {\em Frame quantization theory and equiangular tight frames}.
\newblock PhD thesis, 2007.

\bibitem{Paley1933OnOM}
R.~E. Paley.
\newblock On orthogonal matrices.
\newblock {\em J. Math. Physics}, 12:311--320, 1933.

\bibitem{pratt1969hadamard}
W.~K. Pratt, J.~Kane, and H.~C. Andrews.
\newblock Hadamard transform image coding.
\newblock {\em Proc. IEEE}, 57(1):58--68, 1969.

\bibitem{renes2004symmetric}
J.~M. Renes, R.~Blume-Kohout, A.~J. Scott, and C.~M. Caves.
\newblock Symmetric informationally complete quantum measurements.
\newblock {\em J. Math. Phys.}, 45(6):2171--2180, 2004.

\bibitem{scott2006tight}
A.~J. Scott.
\newblock Tight informationally complete quantum measurements.
\newblock {\em J. Phys. A}, 39(43):13507, 2006.

\bibitem{scott2010symmetric}
A.~J. Scott and M.~Grassl.
\newblock Symmetric informationally complete positive-operator-valued measures:
  A new computer study.
\newblock {\em J. Math. Phys.}, 51(4), 2010.

\bibitem{shang2018enhanced}
J.~Shang, A.~Asadian, H.~Zhu, and O.~G{\"u}hne.
\newblock Enhanced entanglement criterion via symmetric informationally
  complete measurements.
\newblock {\em Phys. Rev. A}, 98(2):022309, 2018.

\bibitem{strohmer2008note}
T.~Strohmer.
\newblock A note on equiangular tight frames.
\newblock {\em Linear Algebra Appl.}, 429(1):326--330, 2008.

\bibitem{sustik2007existence}
M.~A. Sustik, J.~A. Tropp, I.~S. Dhillon, and R.~W. Heath~Jr.
\newblock On the existence of equiangular tight frames.
\newblock {\em Linear Algebra Appl.}, 426(2-3):619--635, 2007.

\bibitem{Tressler2004ASO}
E.~Tressler.
\newblock A survey of the {H}adamard conjecture.
\newblock 2004.

\bibitem{Wallis_1971}
J.~Wallis.
\newblock A skew-{H}adamard matrix of order 92.
\newblock {\em Bull. Aust. Math. Soc.}, 5(2):203–204, 1971.

\bibitem{wallis1971some}
J.~Wallis.
\newblock Some $(1,- 1)$ matrices.
\newblock {\em J. Combin. Theory Ser. B}, 10(1):1--11, 1971.

\bibitem{welch2003lower}
L.~Welch.
\newblock Lower bounds on the maximum cross correlation of signals.
\newblock {\em IEEE Trans. Inform. Theory}, 20(3):397--399, 2003.

\bibitem{Williamson1944Hadamard}
J.~Williamson.
\newblock Hadamard's determinant theorem and the sum of four squares.
\newblock {\em Duke Math. J.}, 11(1):65--81, 1944.

\bibitem{zauner1999quantum}
G.~Zauner.
\newblock {\em Quantum designs}.
\newblock PhD thesis, University of Vienna Vienna, 1999.

\end{thebibliography}

\newpage
\section*{Appendix}
\begin{proof}[Proof of Proposition \ref{prop:adjoint}.]
    $(a)$ Let $V^*, W^*$ denote the duals (space of linear functionals) of $V$ and $W$.
    Because $(\cdot,\cdot)_V$ is non-degenerate, the map $v\mapsto (v,\cdot)_V$ from $V$ to $V^*$ is injective, hence gives an isomorphism of $V$ with its dual, while a similar result holds for $W$ and $W^*$. 
    In particular, for any $f\in V^*$ or $g\in W^*$, they must have the form $f = (v,\cdot)_V$ or $g=(w,\cdot)_W$ for some $v\in V$ or $w\in W$, respectively.
    Non-degeneracy of the respective forms implies that the choice of $v$ and $w$ in this case uniquely determines the linear functional.

    Given $w\in W$, define a linear functional $\varphi_w$ in $V^*$ by $\varphi_w(v) = (Av,w)_W$.
    By the above, there exists a unique $y\in V$ such that $\varphi_w(v) = (Av,w)_W = (v,y)_W$ for all $v\in V$.
    Define the map $A^\dagger:W\to V$ by $A^\dagger w = y$. This map is uniquely determined and well-defined by the unique determination of $y$ from $w$, and the bilinearity of $(\cdot,\cdot)_W$ shows that $A^\dagger$ is linear, as desired.

    For $(b)$ we have 
    \[(w_1, (AB)^\dagger w_2)_W = (ABw_1,w_2)_W = (Bw_1,A^\dagger w_2)_V = (w_1,B^\dagger A^\dagger w_2)_W.\]
    We can see $(c)$ easily from $(b)$, as $I = (AA^{-1})^\dagger = (A^{-1})^\dagger A^\dagger$.
    
    For $(d)$ observe
    \[(Av,w)_W = v^\top A^\top Q_w w = v^\top Q_vQ_v^{-1}A^\top Q_ww = (v,Q_v^{-1}A^\top Q_ww)_V.\]
    The result follows from the uniqueness of $A^\dagger$.

    Finally, we prove $(e)$. 
    Without loss, suppose $V=\R_\mathcal{S}^d$ and $W=\R_\mathcal{E}^n$. Then applying $(iv)$ yields $$(A^\dagger)^\dagger = I(\Omega^{-1}A^\top I)^\top\Omega = I(IA\Omega^{-\top})\Omega = A (-\Omega^{-1}\Omega) = -A.$$
\end{proof}

\end{document}